\documentclass[11pt,a4paper,oneside]{article}
\newcommand{\ignore}[1]{}
\setlength{\textwidth}{160mm}
\setlength{\textheight}{235mm}
\setlength{\oddsidemargin}{0in}
\setlength{\topskip}{1cm}
\setlength{\topmargin}{-0.3in}
\raggedbottom
\setlength{\abovedisplayskip}{3mm}
\setlength{\belowdisplayskip}{3mm}
\setlength{\abovedisplayshortskip}{0mm}
\setlength{\belowdisplayshortskip}{2mm}
\setlength{\normalbaselineskip}{12pt}
\setlength{\textfloatsep}{0pt}

\normalbaselines
\usepackage{amsmath}
\usepackage{amssymb}
\usepackage{appendix}
\usepackage{mathtools}
\usepackage{mathcomp}
\usepackage{textcomp}
\usepackage{amsthm}
\usepackage{enumerate}
\usepackage[auth-lg]{authblk}
\usepackage{cite}
\usepackage{tikz}
\usepackage{caption}
\usepackage{subcaption}
\usepackage{bm}
\usetikzlibrary{arrows}
\usepackage{multicol}
\usepackage{tikz-qtree}
\usepackage{rotating}
\usepackage{url}
\usepackage{array}

\newtheorem{thm}{Theorem}[section]\theoremstyle{plain}
\newtheorem{theorem}[thm]{Theorem}\theoremstyle{plain}
\theoremstyle{plain}
\newtheorem{lemma}[thm]{Lemma}\theoremstyle{plain}
\theoremstyle{plain}

\theoremstyle{plain}
\newtheorem{claim}[thm]{Claim}\theoremstyle{plain}
\newtheorem{corollary}[thm]{Corollary}\theoremstyle{plain}
\theoremstyle{plain}
\theoremstyle{plain}
\newtheorem{conjecture}[thm]{Conjecture}\theoremstyle{plain}

\theoremstyle{remark}
\newtheorem{example}{Example}

\usepackage{bm}
\DeclareMathOperator{\rank}{rank}

\DeclareMathOperator{\cl}{cl}

\DeclareMathOperator{\cyc}{cyc}

\newcommand{\be}{{\bm e}}

\newcommand{\val}{{\rm val}}

\newcommand{\X}{{\cal X}}

\newcommand{\MS}{{\mathcal S}}
\newcommand{\MI}{{\mathcal I}}

\newcommand{\MC}{{\mathcal C}}

\newcommand{\MW}{{\mathcal W}}
\newcommand{\MX}{{\mathcal X}}

\newcommand{\MZ}{{\mathcal Z}}
\newcommand{\ML}{{\mathcal L}}

\newcommand{\MR}{{\mathcal R}}

\newcommand{\ZZ}{{\mathbb Z}}
\newcommand{\R}{{\mathbb R}}

\newcommand{\sm}{{\setminus}}

\begin{document}

\title{Symmetric Tensor Matroids, Dual Rigidity Matroids, and the Maximality Conjecture}
\author{Bill Jackson and\\ Shin-ichi Tanigawa}
\author{ 
Bill Jackson\thanks{School
of Mathematical Sciences, Queen Mary University of London,
Mile End Road, London E1 4NS, England. email:
{\tt B.Jackson@qmul.ac.uk}}  
 and Shin-ichi Tanigawa\thanks{Department of Mathematical Informatics, Graduate School of Information Science and Technology, University of Tokyo, 7-3-1 Hongo, Bunkyo-ku, 113-8656,  Tokyo Japan. email: {\tt tanigawa@mist.i.u-tokyo.ac.jp}}}

\date{\today}

\maketitle

\abstract{
Inspired by a recent result of Brakensiek et al.~\cite{brakensiek} that  symmetric tensor matroids and rigidity matroids are linked by matroid duality, we define abstract symmetric tensor matroids as a dual concept to abstract rigidity matroids and establish their basic properties. We then exploit this duality to obtain an alternative characterisation of the generic $d$-dimensional  rigidity on $K_n$ for $n-d\leq 6$ to that given by Grasseger et al.~in \cite{GGJN}. Our results imply that Graver's maximality conjecture  holds for these matroids. We also consider the related family of $K_{1,t+1}$-matroids on $K_n$ and show that this family has a unique maximal element only when $t\leq 3$. 
This implies that the family of second quasi symmetric  powers of the uniform matroid $U_{t,n}$ does not have a unique maximal matroid if $t\geq 4$ and $n$ is sufficiently large.
}
\section{Introduction}
The following three geometric matroids on the edge set of a graph 
$G=(V,E)$ 
are special cases of constructions given by Lov\'asz \cite{L} in 1977.  

\begin{itemize}
\item The {\em skew-tensor matroid ${\cal W}_t(G)$}. This is the
linear matroid on $E$ obtained by choosing a generic realisation $p:V\to \R^t$ 
of the vertices of $V$ in $t$-dimensional Euclidean space 
and then  
 associating each edge $ij\in E$ with the vector $p_i\wedge p_j$.
\item The {\em symmetric-tensor matroid ${\cal S}_t(G)$}. This is the
linear matroid on $E$ obtained by choosing a generic realisation $p:V\to \R^t$  and then  
 associating each edge $ij\in E$
 with the vector $p_i\otimes p_j+p_j\otimes p_i$.

\item The {\em tensor matroid ${\cal T}_{t_1,t_2}(G)$}. For this matroid, we assume $G$ is bipartite with bipartition $(X,Y)$ and choose generic realisations $p:X\to \R^{t_1}$ and $q:Y\to \R^{t_2}$. We then  
 associate each edge $ij\in E$ with the vector
 $p_i\otimes q_j$.
\end{itemize}
Note that each of these matroids are uniquely defined by the graph $G$ because of the genericity of the realisations.   

These matroids are closely linked to matroids which have been extensively studied in combinatorial rigidity theory -- we refer the reader to  \cite{W} for a comprehensive survey of this theory. The first such connection was obtained by Izmestiev \cite{Iz} who showed that, if $F$ is an edge cut of a graph $G$ whose edge set is a circuit in the generic $d$-dimensional rigidity matroid, then $F$ is the disjoint union of circuits of $\MW_d(G)$. 
A recent paper by Brakensiek et al.~\cite{brakensiek}~gives a much closer connection.

\begin{theorem}[Brakensiek et al.~\cite{brakensiek}]\label{thm:dual}
Let $n, n_1, n_2, d, d_1, d_2$ be integers.
\begin{itemize}
\item For all $1\leq d\leq n$, the $d$-dimensional hyperconnectivitiy matroid ${\cal H}_{d}(K_n)$ 
is the dual of ${\cal W}_{n-d}(K_n)$.
\item For all  $1\leq d\leq n$, the $d$-dimensional symmetric-matrix completion matroid ${\cal I}_{d}(K_n^{\circ})$ 
is the dual of ${\cal S}_{n-d}(K_n^{\circ})$, where $K_n^{\circ}$ denotes the {looped complete graph} i.e. the graph obtained from $K_n$ by adding a loop at each vertex.
\item For all  $1\leq d\leq n+1$, the $d$-dimensional rigidity matroid ${\cal R}_{d}(K_n)$ 
is the dual of ${\cal S}_{n-d-1}(K_n)$.
(Note that the $d$-dimensional rigidity matroid ${\cal R}_{d}(K_n)$
can be obtained from ${\cal I}_{d+1}(K_n^{\circ})$ by contracting all its loops, see, e.g., \cite{JJT}.)
\item For all  $1\leq d_1\leq n_1$ and $1\leq d_2\leq n_2$, 
the $(d_1,d_2)$-dimensional birigidity matroid ${\cal B}_{d_1,d_2}(K_{n_1,n_2})$ 
is the dual of ${\cal T}_{n_1-d_1,n_2-d_2}(K_{n_1,n_2})$.
\end{itemize}
\end{theorem}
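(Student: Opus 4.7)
My plan is to establish all four dualities via a common stress-matrix factorization scheme, described in detail for part~(3); the remaining parts follow the same template with small modifications sketched at the end. The first step is a dimension check: for part~(3), $\rank\MR_d(K_n) + \rank\MS_{n-d-1}(K_n) = (dn - \binom{d+1}{2}) + \binom{n-d}{2} = \binom{n}{2} = |E(K_n)|$, as is necessary for two matroids on this ground set to be dual; the analogous counts dispose of parts~(1), (2) and (4).

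The core of the argument is to identify the stress space of a generic $d$-dimensional framework on $K_n$ -- a subspace of $\R^{E(K_n)}$ of dimension $\binom{n-d}{2}$ -- with the column span (also inside $\R^{E(K_n)}$) of the symmetric-tensor matrix of $\MS_{n-d-1}(K_n)$. Given a generic realisation $p\colon V \to \R^d$, encode each stress $\omega \in \R^E$ as the symmetric stress matrix $\Omega \in \R^{n\times n}$ with $\Omega_{ij} = -\omega_{ij}$ off-diagonal and diagonals chosen so that every row-sum vanishes; the classical identity is that $\omega$ is a stress iff $\Omega \tilde P = 0$, where $\tilde P = (\mathbf{1} \mid P) \in \R^{n\times(d+1)}$ augments the coordinate matrix $P$ of $p$ by a constant column. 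For generic $p$, $\tilde P$ has rank $d+1$, so $\Omega$ has rank at most $t := n-d-1$. Every symmetric $\Omega$ annihilating $\tilde P$ admits a factorisation $\Omega = Q T Q^\top$, where the columns of $Q \in \R^{n\times t}$ form any basis of $\tilde P^{\perp}$ and $T \in S^2\R^t$ is symmetric; writing $q_i \in \R^t$ for the $i$-th row of $Q$, this yields
$$\omega_{ij} = -q_i^\top T q_j = -\tfrac12 \langle T,\, q_i\otimes q_j + q_j\otimes q_i\rangle.$$
The stress space is thus parametrised linearly by $T \in S^2\R^t$, and since both parameter and target spaces have dimension $\binom{n-d}{2}$ the parametrisation is an isomorphism. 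It follows that the stress space coincides with the column span in $\R^{E(K_n)}$ of the symmetric-tensor matrix formed from $q_1,\dots,q_n$; and as the dual of $\MR_d(K_n)$ is by definition represented by any basis of its stress space, the duality $\MR_d(K_n)^* = \MS_{n-d-1}(K_n)$ follows.

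Parts~(1), (2) and (4) follow the same scheme with the following adaptations: for (1), the constant column $\mathbf{1}$ is dropped from $\tilde P$ and the parameter $T$ is taken antisymmetric in $\bigwedge^2 \R^{n-d}$, producing the wedge products $q_i \wedge q_j$ of the skew-tensor matroid; for (2), $\Omega$ is additionally allowed to have arbitrary diagonal entries, and these diagonals correspond exactly to the loops in $K_n^\circ$; for (4), the symmetric $n\times n$ stress matrix is replaced by a rectangular $n_1\times n_2$ matrix with factorisation $\Omega = Q_1 T Q_2^\top$ for generic $T \in \R^{(n_1-d_1)\times(n_2-d_2)}$.

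I anticipate that the main obstacle will be matching the two genericity conditions: starting from a generic $p$ for the rigidity-type matroid, one must argue that the rows $q_1,\dots,q_n$ of the resulting basis $Q$ of $\tilde P^{\perp}$ form a generic realisation for the tensor-type matroid, so that the column span identified above actually represents the \emph{abstract} tensor matroid and not some degenerate specialisation. This reduces to showing that the map sending a generic $p$ to the Grassmannian point $\tilde P^{\perp}$ is dominant, together with the fact that generic points of the Grassmannian yield row-vectors in sufficiently general position -- both plausible on dimensional grounds but requiring a careful algebraic-genericity argument to make rigorous.
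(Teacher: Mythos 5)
First, note that the paper offers no proof of this theorem: it is quoted verbatim from Brakensiek et al.\ \cite{brakensiek}, so there is no internal argument to compare yours against. Judged on its own terms, your core mechanism is the right one and is essentially the argument of \cite{brakensiek}: the dual of a linearly represented matroid is represented by any matrix whose row space is the orthogonal complement in $\R^{E}$ of the row space of the original representation; for $\MR_d(K_n)$ that complement is the stress space; and the factorisation $\Omega=QTQ^\top$ with $Q$ a basis of $\tilde P^{\perp}$ and $T$ symmetric identifies the stress space with the column span of the symmetric-tensor matrix of the configuration $q_1,\dots,q_n$. Your dimension check is correct, and (a point worth making explicit) the restriction of $T\mapsto QTQ^\top$ to off-diagonal entries is still injective because a diagonal matrix annihilating the all-ones column is zero.

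The genuine gap is the step you defer to the end, and your proposed route for closing it would not work as stated. Since $\mathbf 1$ is a column of $\tilde P$, every basis $Q$ of $\tilde P^{\perp}$ has columns orthogonal to $\mathbf 1$, i.e.\ the rows satisfy $\sum_i q_i=0$. So the configurations $q$ you obtain are \emph{never} generic in $(\R^t)^n$: they lie in the sub-Grassmannian $\{W\in Gr(t,n):W\subseteq \mathbf 1^{\perp}\}\cong Gr(t,n-1)$, which has positive codimension, and "generic points of the Grassmannian yield row-vectors in sufficiently general position" is false for the relevant Grassmannian. You must either prove directly that the generic matroid $\MS_{n-d-1}(K_n)$ is already attained at generic points of this sub-Grassmannian (i.e.\ that the independence polynomials do not vanish identically on it), or, more cleanly, prove the looped statement (bullet two) first --- there the two sides correspond to complementary subspaces of $\R^n$ with no linear constraint, so generic points of $Gr(d,n)$ and $Gr(n-d,n)$ match up symmetrically --- and then deduce the rigidity case from the contraction relation $\MR_d(K_n)=\MI_{d+1}(K_n^{\circ})/L$ stated parenthetically in the theorem, via the matroid identity $(M/L)^*=M^*\setminus L$. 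Without one of these two repairs, what you have is a correct identification of two linear matroids at a \emph{special} configuration, not a proof that the generic matroids are dual.
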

The hyperconnectivitiy matroid, symmetric-matrix completion matroid  and birigidity matroid were introduced by Kalai \cite{K}, Singer and Cucuringu \cite{SC} and  Kalai, Nevo and Novik \cite{KNN}, respectively. A brief overview of all three matroids as well as the rigidity matroid  can be found in \cite[Section 6]{JT}.

In this paper, we will explore links between  the matroids $\MS_t$ and $\MR_d$. (We will obtain analogous results for other matroids in a follow up paper.)
Inspired by Graver's concept of an abstract $d$-rigidity matroid \cite{G91}, we will use two fundamental properties of $\MS_t$ to define a family of `abstract symmetric $t$-tensor matroids' on $K_n$ and show that this family is indeed the dual of the family of abstract $d$-rigidity matroids on $K_n$ {\em when we take $d=n-t-1$}. We will use this duality to derive other properties shared by all abstract symmetric $t$-tensor matroids. We will also consider the dual version of Graver's maximality conjecture for abstract symmetric $t$-tensor  matroids and formulate a conjecture which would characterise the supposed unique maximal symmetric $t$-tensor matroid. We then turn our attention to  the particular abstract tensor matroid $\MS_t$. We characterise independence in this matroid for $1\leq t\leq 5$ and deduce that $\MS_t$ is the unique maximal abstract symmetric $t$-tensor  matroid for these values of $t$. Our condition for independence is a new kind of sparsity condition in which the the upper bound on the number of edges in a subgraph depends on the isomorphism class of the subgraph (not just its number of vertices). Finally we consider the larger family of $K_{1,t+1}$-matroids, i.e. matroids on the edge set of $K_n$ in which each copy of $K_{1,t+1}$ is a circuit, and show that this larger family has a unique maximal element only when $t\leq 3$.


\subsubsection*{Example} 
The family of abstract 1-rigidity matroids on $K_n$ is the family of all matroids defined on  $E(K_n)$ which have rank $n-1$ and in which the edge set of every copy of $K_3$ is a circuit. The 1-dimensional rigidity matroid ${\cal R}_1(K_n)$ is the unique maximal abstract 1-rigidity matroid on $K_n$. (Indeed, in this 1-dimensional example, it is the unique abstract 1-rigidity matroid on $K_n$.) 
It is equal to the {\em cycle matroid} of $K_n$ in which a set of edges is a circuit if and only if it induces a cycle in $K_n$. The dual matroid of the cycle matroid is the {\em cocycle matroid} on $K_n$ in which a set of edges is a circuit if and only if it induces a minimal edge cutset in $K_n$. It is the unique (maximal) matroid in the family of abstract symmetric $(n-2)$-tensor matroids, which we define to be the family of all matroids on  $E(K_n)$ which have rank ${{n-1}\choose{2}}$ and in which the edge set of every copy of $K_{1,n-1}$ is a circuit. Theorem \ref{thm:dual} tells us that this matroid is equal to the symmetric tensor matroid
${\cal S}_{n-2}(K_n)$.

The cocircuits in the $d$-dimensional rigidity matroid have a significantly more complicated structure when $d \ge 2$. Nevertheless, the $K_{1,n-1}$-cocircuits of ${\cal R}_1$ extend naturally to $K_{1,n-d}$-cocircuits of ${\cal R}_d$.  This observation of Graver et al. \cite{GSS93} is a key property  of abstract rigidity matroids, and our definition of abstract symmetric tensor  is built on their observation.

\subsubsection*{Notation and Terminology}

Let $G=(V,E)$ be a graph. 
For $X\subseteq V$, let $G[X]$ be the subgraph of $G$ induced by $X$.
For $F\subseteq E$, let $V(F)$ be the set of vertices incident to $F$, and let $G[F]=(V(F),F)$.
For $v\in V$, let $N_G(v)$ be the set of neighbors of $v$ in $G$, and let $d_G(v)=|N_G(v)|$.
For $X=\{v_1,\dots, v_k\}\subseteq V$, let $K(X)$ or $K(v_1,\dots, v_k)$ be the edge set of the complete graph on $X$.
The {\em complement} of $G$ is the graph $\bar G$ with vertex set $V$ and edge set $K(V)\sm E$. 


Given two graphs $G$ and $H$ with no common vertices, we will use $G\dot\cup H$ to denote the disjoint union of $G$ and $H$, and $G+ H$ to denote the {\em join} of $G$ and $H$, i.e. the graph obtained from $G\dot\cup H$ by adding all edges between $G$ and $H$.

Given a matroid defined on the edge set of a complete graph $K_n$ and a graph $G\subseteq K_n$, we will often simplify terminology and say that $G$ has a certain property in $M$ to mean that $E(G)$ has this property in $M$. For example we say that $G$ is a base of $M$ to mean $E(G)$ is a base of $M$. We refer the reader to \cite{O} for matroid concepts not explicitly defined in this paper.


\section{Abstract Rigidity Matroids and Duality}\label{sec:rig}
Graver~\cite{G91} defined the family of abstract $d$-rigidity matroids using two fundamental properties of the generic $d$-dimensional rigidity matroid $\MR_d(K_n)$ in order to get a better understanding of this matroid.  
Theorem~\ref{thm:dual}(b) above motivates us to adopt a similar approach for the symmetric tensor matroid $\MS_t(K_n)$.
We will first describe the axioms for abstract rigidity and then dualise them to obtain our family of `abstract symmetric tensor matroids'.

\subsection{Abstract rigidity matroids}

An {\em abstract $d$-rigidity matroid} is
a matroid $M$ on the edge set of the complete graph $K_n$ with $n\geq d+1$, whose closure operator ${\rm cl}_M$ satisfies the following two axioms.
\begin{description}
\item[(G1)] If $E_1, E_2\subseteq E(K_n)$ with $|V(E_1)\cap V(E_2)|\leq d-1$, then
${\rm cl}_M(E_1\cup E_2)\subseteq K(V(E_1))\cup K(V(E_2))$;
\item[(G2)] If $E_1, E_2\subseteq E(K_n)$ with ${\rm cl}_M(E_1)=K(V(E_1))$, ${\rm cl}_M(E_2)=K(V(E_2))$, and $|V(E_1)\cap V(E_2)|\geq d$, then
${\rm cl}_M(E_1\cup E_2)= K(V(E_1\cup E_2))$.
\end{description}
These conditions reflect two fundamental rigidity properties of generic $d$-dimensional bar-joint frameworks. 
Other examples of abstract $d$-rigidity matroids on $K_n$ can be obtained by taking the bar-joint rigidity matroid of any general position realisation of $K_n$ in $\R^d$ or by considering the generic $C^{d-2}_{d-1}$-cofactor matroid of $K_n$, see \cite{W}. 


Equivalent characterizations of abstract $d$-rigidity matroids were given by  Graver, Servatius and Servatius in \cite{GSS93} and Nguyen~\cite{N10}.
To describe these characterisations, we need to consider four more properties of a matroid $M$ on $E(K_n)$, with $n\geq d+1$.
\begin{description}
\item[(R1)] The rank of $M$ is $dn-{d+1\choose 2}$.
\item[(R2)] Every copy of $K_{d+2}$ is a circuit.
\item[(R3)] Every copy of $K_{1,n-d}$ is a cocircuit.
\item[(R4)] 
If $G\subseteq K_n$, $v$ is a vertex of $G$ of degree $d$ and $G-v$ is independent in $M$, then $G$ is independent in $M$.
\end{description}
\begin{theorem}[Graver-Servatius-Servatius\cite{GSS93} and Nguyen~\cite{N10}]\label{thm:hang}
Let $n,d$ be positive integers with $n\geq d+1$ and $M$ be a matroid on $E(K_n)$.
Then, the following statements are equivalent.
\begin{itemize}
    \item $M$ is an abstract $d$-rigidity matroid.
    \item $M$ satisfies (R1) and (R2).
    \item $M$ satisfies (R1) and (R3).
    \item $M$ satisfies (R2) and (R3).
    \item $M$ satisfies (R2) and (R4).
\end{itemize}
\end{theorem}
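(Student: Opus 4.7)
The plan is to prove the five-way equivalence by establishing a cycle (or star) of implications centered on the rank formula (R1).

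The easy direction is $(1)\Rightarrow (\text{R1}) \wedge (\text{R2}) \wedge (\text{R3}) \wedge (\text{R4})$, which I would obtain by a standard inductive vertex-extension argument. Starting from $K_{d+1}$, one first checks using (G2) that every edge set on $\leq d+1$ vertices is independent, so the rank of $M$ restricted to $K_{d+1}$ is $\binom{d+1}{2}$. Then, adding one vertex at a time with $d$ incident edges to a rigid subgraph $H$, (G2) forces the closure of the extension to be the complete graph on $V(H) \cup \{v\}$, and a rank bound shows the increment is exactly $d$. This simultaneously yields (R1) and (R4). Axiom (R2) then follows because $K_{d+2}$ has exactly one more edge than its predicted rank while (G1) forces every proper subset of that size to be independent, and (R3) follows by a dual rank count on the edges incident to a single vertex.

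The main work is the converse direction, where I would establish the following rigidity lemma, provable under either (R1)+(R2), (R1)+(R3), or (R2)+(R3): if $H \subseteq K_n$ and $r_M(E(H)) = d|V(H)| - \binom{d+1}{2}$, then ${\rm cl}_M(E(H)) = K(V(H))$. The proof proceeds by induction on $|V(H)|$, using (R2) as the inductive certificate through attached copies of $K_{d+2}$. Once this lemma is in hand, (G2) follows by a rank sum on two overlapping rigid pieces (with overlap $\geq d$), and (G1) follows by a contradiction argument: any edge in ${\rm cl}_M(E_1 \cup E_2) \setminus (K(V(E_1)) \cup K(V(E_2)))$ would either violate the rank formula (R1) or produce a circuit violating (R2). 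The pairwise equivalence of $(\text{R1})+(\text{R2})$, $(\text{R1})+(\text{R3})$, $(\text{R2})+(\text{R3})$ is then a short counting exercise: given (R1), both $K_{d+2}$ and $K_{1,n-d}$ have edge counts that force the existence of a circuit, respectively cocircuit, in them, and the local axioms (R2) and (R3) merely identify these circuits/cocircuits precisely.

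For Nguyen's equivalence $(\text{R2})+(\text{R4}) \Leftrightarrow (\text{R2})+(\text{R3})$, I would derive (R1) from (R2)+(R4) by iteratively extending an independent copy of $K_{d+1}$ by $d$-valent vertex additions, obtaining an independent spanning subgraph of the correct size; (R2) then provides the matching upper bound on the rank. For the converse direction, (R4) follows from (R1)+(R2) via a circuit-exchange argument at the degree-$d$ vertex, which shows that any circuit of $G$ must already be present in $G-v$.

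The hard part is the rigidity lemma together with its use to extract (G1), since it requires translating the purely local circuit-theoretic assumption (R2) into a global closure statement spanning arbitrary vertex configurations. The delicate point is carefully setting up the induction so that each step patches together closed subgraphs via a copy of $K_{d+2}$ bridging their intersection, without accidentally trivialising the hypothesis when $|V(H)|$ is close to the base case.
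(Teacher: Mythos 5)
This theorem is not proved in the paper: it is quoted verbatim from Graver--Servatius--Servatius \cite{GSS93} and Nguyen \cite{N10}, so there is no in-paper argument to compare yours against. Judged on its own terms, your outline follows the standard route of those sources and the overall architecture (forward direction by vertex additions; converse via the rank formula for complete subgraphs; (R2)+(R4) handled by building an independent spanning subgraph) is viable. Two points deserve correction or expansion. First, you have the roles of (G1) and (G2) the wrong way round in the forward direction: independence of $K(U)$ for $|U|\leq d+1$, and more generally all rank \emph{lower} bounds, come from (G1) (e.g.\ write $K(U)-uv$ as $K(U-u)\cup K(U-v)$ with vertex overlap $|U|-2\leq d-1$ to see that $uv\notin\cl_M(K(U)-uv)$), whereas (G2) is what supplies the rank \emph{upper} bound $r(K(U+v))\leq r(K(U))+d$ by gluing $K(U)$ to $K(\{v,x_1,\dots,x_d\})$ along $d$ vertices.

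Second, your ``rigidity lemma'' is in fact the easy half of the converse: once (R2) gives $r(K(U))\leq d|U|-\binom{d+1}{2}$ by vertex additions, the hypothesis $r(E(H))=d|V(H)|-\binom{d+1}{2}$ immediately forces $\cl_M(E(H))\supseteq K(V(H))$, and (G2) follows from one more $K_{d+2}$-circuit argument across the overlap. The genuinely delicate step, which you compress into one sentence, is recovering (G1), i.e.\ showing that $K(V_1)\cup K(V_2)$ is \emph{closed} when $h=|V_1\cap V_2|\leq d-1$. Your proposed rank-count contradiction does work, but only after two preparatory facts are in place: (i) under (R1)+(R2) every complete subgraph $K(U)$ is closed with rank exactly $d|U|-\binom{d+1}{2}$ (the lower bound needs (R1) plus the vertex-addition upper bound applied to $K_n$, and closedness needs a separate argument since the matroid carries no assumed vertex symmetry); and (ii) the submodularity identity
\[
r(K(V_1))+r(K(V_2))-r(K(V_1\cap V_2))=r(K(V_1\cup V_2))-\tbinom{d+1-h}{2},
\]
applied after first enlarging $V_1,V_2$ so that the overlap has size exactly $d-1$ while keeping the offending edge $uw$ crossing; only then does $uw\in\cl_M(K(V_1)\cup K(V_2))$ force $r(K(V_1\cup V_2))\leq r(K(V_1\cup V_2))-1$ via the $K_{d+2}$-circuits through $\{u,w\}\cup(V_1\cap V_2)$. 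Edges with an endpoint outside $V_1\cup V_2$ must be handled separately by the closedness of $K(V_1\cup V_2)$. With these details supplied your plan goes through, but as written the crux of the theorem is asserted rather than proved.
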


The fact that every abstract $d$-rigidity matroid satisfies (R3) 
can be extended as follows.
\begin{lemma}\label{lem:Kab_cocircuit}
Let $n,d$ be positive integers with $n\geq d+2$ and $M$ be 
an abstract $d$-rigidity matroid on $K_n$. Then
every copy of $K_{a,b}$ with $a,b\geq 1$ and $a+b=n-d+1$
is a cocircuit in $M$.
\end{lemma}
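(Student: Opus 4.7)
The plan is to show that $F=E(K_{a,b})$ is a cocircuit of $M$ by proving that its complement $\bar F = E(K_n)\setminus F$ is a hyperplane. Write the bipartition of $K_{a,b}$ as $(A,B)$ with $|A|=a$ and $|B|=b$, and set $C = V(K_n)\setminus (A\cup B)$, so that $|C| = n-(a+b) = d-1$. Every edge of $K_n$ either lies between $A$ and $B$ (and is then in $F$) or has both endpoints in $A\cup C$ or both endpoints in $B\cup C$, so $\bar F = K(A\cup C)\cup K(B\cup C)$.

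First I would show that $\bar F$ is closed by a single application of (G1) to $E_1 = K(A\cup C)$ and $E_2 = K(B\cup C)$: the vertex sets meet in $C$, of size $d-1$, so ${\rm cl}_M(E_1\cup E_2)\subseteq K(V(E_1))\cup K(V(E_2)) = \bar F$. Since $F$ is nonempty, $\bar F$ is a proper flat and hence has rank at most $r(M)-1$.

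The main step is the matching lower bound: for any edge $e=uv\in F$ (with $u\in A$ and $v\in B$), the set $\bar F\cup\{e\}$ spans $M$. This I would establish by three applications of (G2). First, apply (G2) to $K(B\cup C)$ and $K(C\cup\{u,v\})$: each is a complete graph on its vertex set and is therefore closed, and their vertex sets meet in $C\cup\{v\}$ of size exactly $d$, so their union has closure $K(B\cup C\cup\{u\})$. Crucially, this union meets $F$ only in $e$ itself (the only vertex of $A$ appearing is $u$, and its only neighbour in the $K(C\cup\{u,v\})$ piece outside $C$ is $v$), so it lies inside $\bar F\cup\{e\}$, giving $K(B\cup C\cup\{u\})\subseteq {\rm cl}_M(\bar F\cup\{e\})$. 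The symmetric argument yields $K(A\cup C\cup\{v\})\subseteq {\rm cl}_M(\bar F\cup\{e\})$. A third application of (G2) to these two complete graphs, whose vertex sets meet in $C\cup\{u,v\}$ of size $d+1$, then gives ${\rm cl}_M(\bar F\cup\{e\})\supseteq K(V(K_n))=E(K_n)$, so $\bar F\cup\{e\}$ spans $M$ as required. Combining both bounds, $\bar F$ is a hyperplane and $F$ a cocircuit.

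The step I would expect to require the most care is the bookkeeping in the first two (G2) applications, showing that each auxiliary union meets $F$ in exactly the single edge $e$; this is precisely where the choice $|C|=d-1$ pays off, since $C\cup\{u,v\}$ has size $d+1$ yet $u$ and $v$ are the only vertices of $A$ and $B$ that appear in the $K(C\cup\{u,v\})$ piece. The cases $a=1$ or $b=1$ also follow directly from axiom (R3), but the argument above handles all $a,b\geq 1$ with $a+b=n-d+1$ uniformly.
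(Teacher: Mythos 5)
Your proof is correct, and it reaches the hyperplane $\overline F = K(A\cup C)\cup K(B\cup C)$ by the same decomposition as the paper (two complete graphs overlapping in the $(d-1)$-set $C$), with the upper bound $r(\overline F)\leq r(M)-1$ obtained identically from (G1). Where you genuinely diverge is in the matching lower bound: the paper computes $r(K(A\cup C)\cup K(B\cup C))\geq dn-{d+1\choose 2}-1$ directly from the rank formula $r(K(X))=d|X|-{d+1\choose 2}$ for complete subgraphs together with (R1), whereas you avoid all rank arithmetic and instead show ${\rm cl}_M(\overline F\cup\{e\})=E(K_n)$ by three applications of (G2) through the auxiliary complete graph $K(C\cup\{u,v\})$ on $d+1$ vertices. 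Your route buys two things: it stays entirely within the closure axioms (G1)--(G2) (plus the standard consequence that complete subgraphs are closed), so it does not need to import the rank of $M$ or of its complete subgraphs; and it replaces the paper's "one can check" rank inequality with explicit, verifiable closure steps. The paper's computation is shorter once the rank formula is taken as known. The only place your argument degenerates is $d=1$ with $a=1$ or $b=1$, where $K(C\cup\{u,v\})$ and one of the two sides share too few vertices for (G2); as you note, those cases are exactly $K_{1,n-d}$ and are covered by (R3), so the proof is complete.
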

\begin{proof}
Consider two complete subgraphs $G_1$ and $G_2$ with $|V(G_1)|=a+d-1$, $|V(G_2)|=b+d-1$, and $|V(G_1)\cap V(G_2)|=d-1$.
Then, by the fact that the rank of $G_i$ is $d|V(G_i)|-{d+1\choose 2}$, one can check that the rank of $G_1\cup G_2$ is at least $dn-{d+1\choose 2}-1$. Property
(G1) further implies that $G_1\cup G_2$ is closed.
Since the rank of $M$ is $dn-{d+1\choose 2}$, $G_1\cup G_2$ must be a hyperplane.
Since the complement of $E(G_1\cup G_2)$ forms the edge set of $K_{a,b}$ with $a+b=n-d+1$, the statement follows.
\end{proof}
For $d=2$, Lov{\'a}s and Yemini~\cite{LY} gave a combinatorial characterization of the closure of each set in the generic $2$-dimensional rigidity matroid ${\cal R}_2(K_n)$.
Using this, Servatius~\cite{Servatius} gave a combinatorial characterization of cocircuits  of ${\cal R}_2(K_n)$.

\subsection{Abstract Symmetric Tensor Matroids}
We will define the family of abstract symmetric $t$-tensor matroids by
dualizing the properties in the last subsection and putting $t=n-d-1$.
We begin by choosing three properties (R1), (R2) and (R3) which are easy to dualise. Given integers $t, n$ with $n\geq t+1$, consider the following dual properties for a matroid $M$ on $E(K_n)$.
\begin{description}
\item[(R1*)] The rank of $M$ is ${t+1\choose 2}$.
\item[(R2*)] Every copy of $K_{n-t+1}$ is a cocircuit in $M$.
\item[(R3*)] Every copy of $K_{1,t+1}$ is a circuit in $M$.
\end{description}

We may apply matroid duality to Theorem~\ref{thm:hang} to deduce that any two of these properties are equivalent to the statement that the dual matroid $M^*$ is an abstract $d$-rigidity matroid for a suitable choice of $d$.  

\begin{lemma}\label{lem:dual}
Let $n,t$ be non-negative integers  with $n\geq t+1$
and $M$ be a matroid on $E(K_n)$. 
Then, the following statements are equivalent.
\begin{itemize}
    \item $M$ satisfies (R1*) and (R2*).
\item $M$ satisfies (R1*) and (R3*).
\item $M$ satisfies (R2*) and (R3*).
\item $M^*$ is an abstract $(n-t-1)$-rigidity matroid on $K_n$.
\end{itemize}
\end{lemma}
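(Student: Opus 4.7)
The plan is to apply matroid duality axiom-by-axiom to Theorem~\ref{thm:hang} under the parameter substitution $d = n-t-1$. Once the dictionary between the starred and unstarred axioms is set up, the four-way equivalence follows immediately from the three pairwise equivalences supplied by that theorem.

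First I would verify that the starred axioms are precisely the matroid-theoretic duals of the corresponding unstarred axioms at $d = n-t-1$. Since $\rank(M)+\rank(M^*)=\binom{n}{2}$, the identity $\rank(M)=\binom{t+1}{2}$ is equivalent to $\rank(M^*)=\binom{n}{2}-\binom{t+1}{2}$, and a short algebraic check gives
\[
\binom{n}{2}-\binom{t+1}{2} \;=\; dn-\binom{d+1}{2} \qquad \text{when } d=n-t-1,
\]
so (R1*) for $M$ is equivalent to (R1) for $M^*$. For (R2*)/(R2), cocircuits of $M$ are circuits of $M^*$, and $K_{n-t+1}=K_{d+2}$; for (R3*)/(R3), circuits of $M$ are cocircuits of $M^*$, and $K_{1,t+1}=K_{1,n-d}$. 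Hence the pairs (R1*)+(R2*), (R1*)+(R3*), and (R2*)+(R3*) for $M$ correspond under duality to (R1)+(R2), (R1)+(R3), and (R2)+(R3) for $M^*$ respectively, and Theorem~\ref{thm:hang} finishes the argument.

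The only real obstacle is the boundary case $n=t+1$, where $d=0$ falls outside the positive-integer hypothesis of Theorem~\ref{thm:hang}. In this case each of (R1*), (R2*), (R3*) is easily seen to hold if and only if $M$ is the free matroid on $E(K_n)$: (R1*) says $\rank(M)=\binom{n}{2}$; (R2*) says that each single edge is a cocircuit, hence a coloop; and (R3*) is vacuous since $K_n$ has no copy of $K_{1,n}$. All three pairs therefore reduce to the single statement ``$M$ is free''. Dually $M^*$ is the rank-zero matroid on $E(K_n)$, which satisfies (G1) vacuously (the hypothesis $|V(E_1)\cap V(E_2)|\leq -1$ is never met) and (G2) trivially, so it qualifies as an abstract $0$-rigidity matroid and the four-way equivalence persists in this boundary case as well.
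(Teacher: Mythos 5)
Your main argument is exactly the paper's: the paper's proof is a two-line appeal to Theorem~\ref{thm:hang} under $d=n-t-1$ together with the rank identity $\binom{n}{2}-\bigl(dn-\binom{d+1}{2}\bigr)=\binom{n-d}{2}=\binom{t+1}{2}$, and your axiom-by-axiom dictionary ((R1*)$\leftrightarrow$(R1), cocircuits of $M$ are circuits of $M^*$ with $K_{n-t+1}=K_{d+2}$, circuits of $M$ are cocircuits of $M^*$ with $K_{1,t+1}=K_{1,n-d}$) is precisely the content of that appeal. For $n\geq t+2$ (so $d\geq 1$) this is complete and correct.

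The one place you diverge is the boundary case $n=t+1$, which you rightly flag but do not actually close. You verify only one implication: that if $M$ is free then $M^*$ has rank zero and satisfies (G1)--(G2). For the four-way equivalence you also need the converse, namely that every matroid on $E(K_n)$ satisfying (G1) and (G2) with $d=0$ has rank zero, and under the paper's literal definition this is false: the uniform matroid $U_{1,\binom{n}{2}}$ satisfies (G1) vacuously (the hypothesis $|V(E_1)\cap V(E_2)|\leq -1$ never holds) and satisfies (G2) because $\cl(E_1)=K(V(E_1))$ forces $V(E_1)=V$ for nonempty $E_1$, whence the conclusion is automatic; yet its dual has rank $\binom{n}{2}-1$ and so fails (R1*) and (R2*). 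So "abstract $0$-rigidity matroid" in the (G1)/(G2) sense is strictly weaker than "rank zero", and the fourth bullet does not imply the first three in this degenerate case. To be fair, the paper's own proof sidesteps the issue by simply writing $d=n-t-1\geq 0$ even though Theorem~\ref{thm:hang} is stated for positive $d$; the clean fix is either to exclude $n=t+1$ (where all three starred pairs do correctly collapse to "$M$ is free", as you show) or to read "abstract $0$-rigidity matroid" as "satisfies (R1) and (R2) with $d=0$", i.e.\ the rank-zero matroid, rather than via (G1)--(G2).
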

\begin{proof}
This is a dual interpretation of Theorem~\ref{thm:hang} for $d=n-t-1\geq 0$.
Note that ${n\choose 2}-\left(dn+{d+1\choose 2} \right)={n-d\choose 2}={t+1\choose 2}$.
\end{proof}
In view of Theorem~\ref{thm:dual} and Lemma~\ref{lem:dual},
we say that a matroid $M$ on $E(K_n)$ is an {\em abstract symmetric $t$-tensor matroid}
if it satisfies properties (R1*) and (R3*).
Thus $M$ is an abstract symmetric $t$-tensor matroid on $E(K_n)$  if and only if its dual matroid is an abstract symmetric $(n-t-1)$-tensor matroid on $E(K_n)$, and the symmetric $t$-tensor matroid on $E(K_n)$ belongs to the family of  abstract symmetric $t$-tensor matroids on $E(K_n)$.

Since symmetric $t$-tensor matroids are defined on the looped complete graph $K_n^{\circ}$, we can define 
abstract symmetric $t$-tensor matroids on $E(K_n^{\circ})$ using the same axioms.
Then, the dual provides an abstraction of symmetric-matrix completion matroids. We will give more details on this family in a follow up paper.

By definition, the family of abstract symmetric $t$-tensor matroids on $E(K_n^{\circ})$ coincides with the  family of second symmetric powers of the uniform matroid $U_{t,n}$ in the sense of Lov{\'a}sz~\cite{L} and Mason~\cite{M}. In addition, they refer to a matroid satisfying (R3*), but not necessarily (R1*) or (R2*), as a second quasi symmetric power of $U_{t,n}$. 

\medskip

Dualising Lemma~\ref{lem:Kab_cocircuit} immediately gives us the  following result.
\begin{lemma}\label{lem:dual_circuits}
Let $M$ be  an abstract symmetric $t$-tensor matroid on $E(K_n)$ with $n\geq t+1$.
Then every
copy of $K_{a,b}$ with $a,b\geq 1$ and $a+b=t+2$
is a circuit of $M$.
\end{lemma}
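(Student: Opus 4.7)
The plan is to simply dualise Lemma~\ref{lem:Kab_cocircuit}, exactly as the text suggests. By Lemma~\ref{lem:dual}, since $M$ satisfies (R1*) and (R3*), the dual matroid $M^*$ is an abstract $(n-t-1)$-rigidity matroid on $K_n$. Set $d=n-t-1$, so that $n-d+1=t+2$ and $d\geq 0$.

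Next I would verify that Lemma~\ref{lem:Kab_cocircuit} can indeed be applied to $M^*$ with this value of $d$. Its hypothesis $n\geq d+2$ becomes $t\geq 1$, so I would dispose of the case $t=0$ separately: then $a+b=2$ forces $a=b=1$, and $K_{1,1}$ is a single edge, which is a circuit of $M$ directly by (R3*) (namely $K_{1,t+1}=K_{1,1}$). For $t\geq 1$, Lemma~\ref{lem:Kab_cocircuit} applied to $M^*$ tells us that every copy of $K_{a,b}$ with $a,b\geq 1$ and $a+b=n-d+1=t+2$ is a cocircuit of $M^*$, and hence a circuit of $M=(M^*)^*$ by standard matroid duality. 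This gives the conclusion.

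There is essentially no obstacle here beyond bookkeeping with the parameter substitution $d=n-t-1$ and the trivial boundary case $t=0$; the entire content of the lemma is already packaged in Lemma~\ref{lem:dual} together with Lemma~\ref{lem:Kab_cocircuit}.
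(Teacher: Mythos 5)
Your proof is correct and is exactly the paper's argument: the paper derives this lemma by ``dualising Lemma~\ref{lem:Kab_cocircuit}'' via Lemma~\ref{lem:dual}, precisely as you do. Your extra care with the boundary cases is fine (and note that when $n=t+1$ the statement is vacuous anyway, since $K_{a,b}$ with $a+b=t+2>n$ does not fit in $K_n$).
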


Note that a  matroid  on $K_n$ in which every copy of $K_{a,b}$ with $a,b\geq 1$ and $a+b=t+2$
is a circuit may not be an abstract symmetric $t$-tensor matroid. To see this, we can just take the truncation of any abstract symmetric $t$-tensor matroid on $K_n$. This operation will not change any of the circuits $K_{a,b}$ but will reduce the rank to $t(t+1)-1$.

Basic properties of abstract rigidity matroids can give rise to  non-trivial properties of  abstract symmetric tensor matroids.
Our next two lemmas, which determine families of sets which are cyclic flats in every abstract symmetric tensor  matroid, are  examples of this. (Recall that a set is {\em closed} in a matroid  if it is equal to its own closure and is {\em cyclic} if it is the union of cicuits.) 

\begin{lemma}\label{lem:cyclic_flats}
Let $n,t$ be positive integers with $n \geq t+1$ and  $M$ be an abstract symmetric $t$-tensor matroid on $K_n$. 
\begin{description}
\item[(a)] $K_{a,b}$ is a cyclic flat of $M$ with rank $ab-{a+b-t\choose 2}$ whenever $3\leq a\leq t-1$, $3\leq b\leq  t-1$ and $t+2\leq a+b\leq n$.
In particular, if $a+b=t+2$, then $K_{a,b}$ is a closed circuit in $M$.
\item[(b)] $K_a+\bar{K}_{n-a}$ is a cyclic flat in $M$
with rank $at-{a\choose 2}$ in $M$ whenever $0\leq a\leq t-1$.
\end{description}
\end{lemma}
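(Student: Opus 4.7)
The plan is to dualise: by Lemma~\ref{lem:dual}, $M^*$ is an abstract $d$-rigidity matroid on $K_n$ with $d=n-t-1$. Since $F$ is a cyclic flat of $M$ iff $E(K_n)\setminus F$ is a cyclic flat of $M^*$, and ranks are related by $r_M(F)=r_{M^*}(E(K_n)\setminus F)+r(M)-|E(K_n)\setminus F|$, it suffices to work in $M^*$.

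For part~(a), I would partition $V(K_n)$ into sets $A,B,C$ of sizes $a,b,n-a-b$ so that $E(K_{a,b})$ consists of the $A$--$B$ edges; its complement in $E(K_n)$ is then $E_1\cup E_2$ with $E_1=K(A\cup C)$ and $E_2=K(B\cup C)$. The hypothesis $a+b\ge t+2$ gives $|V(E_1)\cap V(E_2)|=|C|\le d-1$, so (G1) applied in $M^*$ forces ${\rm cl}_{M^*}(E_1\cup E_2)\subseteq K(V(E_1))\cup K(V(E_2))=E_1\cup E_2$, making $E_1\cup E_2$ closed in $M^*$. The hypotheses $a,b\le t-1$ give $|A\cup C|,|B\cup C|\ge d+2$, so by (R2) every edge of $E_i$ lies in a $K_{d+2}$-circuit of $M^*$ inside $V(E_i)$, and $E_1\cup E_2$ is therefore cyclic. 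Thus $E_1\cup E_2$ is a cyclic flat of $M^*$, and by duality $E(K_{a,b})$ is a cyclic flat of $M$. For the rank, I would invoke two standard consequences of the axioms---that $r_{M^*}(K(U))=d|U|-\binom{d+1}{2}$ for $|U|\ge d+1$, and the modular equality $r_{M^*}(E_1\cup E_2)=r_{M^*}(E_1)+r_{M^*}(E_2)-r_{M^*}(E_1\cap E_2)$ valid when $|V(E_1)\cap V(E_2)|\le d-1$---and then substitute into the duality formula to obtain $r_M(E(K_{a,b}))=ab-\binom{a+b-t}{2}$ after simplification.

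Part~(b) would follow by the same template but more directly. The complement of $E(K_a+\bar K_{n-a})$ in $E(K_n)$ is the edge set of the complete graph on the $n-a$ vertices of $\bar K_{n-a}$. This set is closed in $M^*$ by (G1) applied with $E_2=\emptyset$, and since $a\le t-1$ gives $n-a\ge d+2$, (R2) implies it is cyclic. So it is a cyclic flat of $M^*$ of rank $d(n-a)-\binom{d+1}{2}$, and the duality formula yields rank $at-\binom{a}{2}$ in $M$. The main obstacle is the modular equality used in part~(a): submodularity provides only one direction, and the matching inequality needs a separate derivation from the rigidity axioms (a standard consequence of (G1)--(G2)); once this is in place, the remaining calculations are routine algebra.
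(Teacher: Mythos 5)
Your proof is correct and follows essentially the same route as the paper: the complement of $K_{a,b}$ is written as the union of the two complete graphs $K(A\cup C)$ and $K(B\cup C)$ meeting in $h=n-a-b\le d-1$ vertices, (G1) gives closedness, (R2) gives cyclicity, the complement of $K_a+\bar K_{n-a}$ is handled as the single complete graph $K_{n-a}$, and the dual rank formula gives the stated ranks. The modular rank equality you flag as the remaining obstacle is exactly the step the paper also treats as known (it simply asserts the rank deficiency of the union, just as it says ``one can check'' in the proof of Lemma~\ref{lem:Kab_cocircuit}), so your argument is no less complete than the published one.
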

\begin{proof}
Let $M^*$ be the dual matroid of $M$ and put $d=n-t-1$. Then $M^*$ is an abstract $d$-rigidity matroid on $K_n$ by Lemma \ref{lem:dual}.\\
(a) 
Let $h=n-(a+b)$,
and let $H$ be the union of $K_{a+h}$ and $K_{b+h}$ with $h$ common vertices. Then the number of vertices of $H$ is $n$.
Since $a, b\leq t-1$ and $a+b\geq t+2$, we have 
$a+h\geq d+2$, $b+h\geq d+2$ and  $h\leq d-1$.
This implies that $K_{a+h}$ and $K_{b+h}$ are cyclic flats in $M^*$ by (R2) and hence $H$ is a cyclic flat in $M^*$ by (G1).
Thus    $\bar{H}=K_{a,b}$ is a cyclic flat in $M$.
To determine the rank of $K_{a,b}$ in $M$, observe that the rank deficiency of $H$ in $M^*$ is ${d+1-h\choose 2}$, which is equal to ${d+b-t\choose 2}$.
Hence, by the dual rank formula, the rank of $\bar{H}=K_{a,b}$ in $M$
is $ab-{d+b-t\choose 2}$.
\\[1mm]
(b) The complement of the edge set of  $K_a+\bar{K}_{n-a}$ is the edge set of $K_{n-a}$, which is a cyclic flat in $M^*$ since $n-a\geq n-t+1\geq d+2$. Hence its complement is a cyclic flat in $M$.
\end{proof}

\begin{lemma}\label{lem:abconing}
Let $n,t$ be positive integers with $2\leq t\leq n-2$, $M$ be an abstract symmetric $t$-tensor matroid on $K_n$ and $E_v$ be the set of edges of $K_n$ incident with a given vertex $v$. Then $N=M/E_v$ is an abstract symmetric $(t-1)$-tensor matroid on $K_{n-1}$. In addition, if $F$ is a cyclic flat of $N$ of rank $k$, then $F\cup E_v$ is a cyclic flat of $M$ of rank $k+t$.
\end{lemma}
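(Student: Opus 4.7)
My plan is to dualise and then invoke the characterisations of abstract rigidity matroids already available. Setting $d = n - t - 1$, Lemma~\ref{lem:dual} tells us that $M^*$ is an abstract $d$-rigidity matroid on $K_n$, and the standard matroid identity $(M/E_v)^* = M^* \setminus E_v$, together with the numerical equality $(n-1) - (t-1) - 1 = d$, reduces the first assertion to: $M^* \setminus E_v$ is an abstract $d$-rigidity matroid on $K_{n-1}$. I would verify this by checking (R1) and (R2) for $M^* \setminus E_v$ and then applying Theorem~\ref{thm:hang}.

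Property (R2) is essentially free, since every copy of $K_{d+2}$ in $K_{n-1}$ sits on vertices disjoint from $v$, hence is a circuit of $M^*$ unaffected by the deletion of $E_v$. For (R1) I would compute $r_{M^*}(E(K_{n-1}))$ via the dual rank formula $r_{M^*}(E(K_{n-1})) = |E(K_{n-1})| - r_M(E(K_n)) + r_M(E_v)$. The only non-trivial ingredient is the rank of $E_v = K_{1,n-1}$ in $M$: from (R3*), every $K_{1,t+1} \subseteq E_v$ is a circuit of $M$, while any $K_{1,t}$ inside such a circuit is independent, so $r_M(E_v) = t$. Plugging in $r_M(E(K_n)) = \binom{t+1}{2}$ and $r_M(E_v) = t$ reduces (R1) to the routine algebraic identity $\binom{n-1}{2} - \binom{t+1}{2} + t = d(n-1) - \binom{d+1}{2}$, which is easily checked for $d = n-t-1$.

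For the cyclic flat statement, the hypothesis $t \leq n - 2$ gives $t + 1 \leq n - 1$, so every edge of $E_v$ lies in a $K_{1,t+1} \subseteq E_v$ circuit of $M$, making $E_v$ itself cyclic in $M$. Given a cyclic flat $F$ of $N = M/E_v$ of rank $k$, the standard closure correspondence for contractions, $\cl_{M/E_v}(Y) = \cl_M(Y \cup E_v) \setminus E_v$, immediately gives that $F \cup E_v$ is closed in $M$, while the general matroid fact that ``$X$ cyclic in $M$ and $F$ cyclic in $M/X$ implies $F \cup X$ cyclic in $M$'' (a one-line consequence of the same closure identity) yields that $F \cup E_v$ is cyclic in $M$. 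The rank is then $r_M(F \cup E_v) = r_{M/E_v}(F) + r_M(E_v) = k + t$ by the contraction rank formula. I expect the most delicate step to be the cyclic-in-contraction check, but it is entirely standard matroid theory; the rigidity portion is essentially mechanical once one notices that passing to the dual is the right move.
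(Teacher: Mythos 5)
Your argument is correct. For the first claim you take the same route as the paper --- dualise to $M^*$, observe $(M/E_v)^* = M^*\setminus E_v$, and reduce to showing that deleting a vertex from an abstract $d$-rigidity matroid yields an abstract $d$-rigidity matroid on $K_{n-1}$ --- but where the paper simply asserts this deletion step, you actually prove it: (R2) survives deletion trivially, and (R1) follows from the dual rank formula once you know $r_M(E_v)=t$, which you correctly extract from (R3*) (every $(t+1)$-subset of $E_v$ is a copy of $K_{1,t+1}$, hence a circuit, and $n-1\geq t+1$ guarantees such a subset exists); the closing identity $\binom{n-1}{2}-\binom{t+1}{2}+t=d(n-1)-\binom{d+1}{2}$ checks out since $\binom{t+1}{2}-t=\binom{t}{2}$. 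For the cyclic-flat claim your route genuinely differs from the paper's: the paper dualises once more, using the correspondence between cyclic flats of $N$ and cyclic flats of $N^*=M^*\setminus E_v$ via complementation in $K_{n-1}$, notes that $\overline{F\cup E_v}$ is the same edge set with $v$ added as an isolated vertex, and argues inside the abstract rigidity matroid $M^*$ before dualising back. You instead stay on the primal side and use the contraction identities $\cl_{M/E_v}(Y)=\cl_M(Y\cup E_v)\setminus E_v$ and $r_{M/E_v}(Y)=r_M(Y\cup E_v)-r_M(E_v)$, together with the fact that a circuit of $M/E_v$ has the form $C\setminus E_v$ for a circuit $C$ of $M$ (which, combined with the cyclicity of $E_v$ itself, gives cyclicity of $F\cup E_v$). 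One small quibble: this last cyclicity step follows from the circuit description of contraction rather than from the closure identity as you state, but the fact is true and standard. Your version of the second half is arguably cleaner, as it avoids the implicit justification the paper needs for why a cyclic flat of the deletion minor $M^*\setminus E_v$ is again a cyclic flat of $M^*$ (which rests on $E(K_{n-1})$ being a flat of $M^*$).
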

\begin{proof}
 Since $M$ is an abstract symmetric $t$-tensor matroid on $K_n$, $M^*$ is an abstract $(n-t-1)$-rigidity matroid on $K_n$. This implies that $M^*-E_v$ is an abstract $(n-t-1)$-rigidity matroid on $K_{n-1}$ and hence 
 $(M^*-E_v)^*=M/E_v=N$ is an  abstract symmetric $(t-1)$-tensor matroid on $K_{n-1}$. 

Since $F$ is a cyclic flat in $N$, $\overline F$ is a cyclic flat in $N^*$.
Observe that  $\overline{F\cup E_v}$ is obtained from $\overline F$ by adding $v$ as an isolated vertex.
Hence $\overline{F\cup E_v}$ is a cyclic flat in $M^*$.
By duality this implies that $F\cup E_v$ is a cyclic flat in $M$.

The same trick combined with the rank formula for the  dual of a matroid
tells us that the rank of $F\cup E_v$ in $M$ is $k+t$.
\end{proof}

{For all positive integers $n,t$ 
we  recursively define a family ${\cal C}_{n,t}$ of graphs with $n$ vertices. 
We first put ${\cal C}_{n,t}=\{\bar K_n\}$ for all $t\geq 1$ and $n\leq t+1$ and 
${\cal C}_{n,1}=\{K_n,\bar{K}_n\}$ for all $n\geq 3$.} Then, for $t\geq 2$ and $n\geq t+2$, we put 
\[
\begin{split}
{\cal C}_{n,t}&=\{\bar K_n\}
\cup \{G+K_1:G\in {\cal C}_{n-1,t-1}\} \\
&\hspace{2em}\cup \{K_{a,b}\dot\cup \bar K_{n-a-b}: 3\leq a, b\leq t-1, t+2\leq a+b\leq n\}.
\end{split}
\]
We next recursively define a weight function $c_{t}:\bigcup_{n\geq 1}{\cal C}_{n,t}\to {\mathbb Z}$. We first  
put $c_t(\bar K_n)=0$ for $n,t\geq 1$ and $c_{1}(K_n)=1$ for all $n\geq 3$. We then let
$c_t(K_{a,b}\dot\cup \bar{K}_{n-a-b})=ab-{a+b-t\choose 2}$ for $3\leq a,b\leq t-1$ and $t+2\leq a+b\leq n$, and  
$c_t(G+K_1)=c_{t-1}(G)+t$ for all $G\in {\cal C}_{n-1,t-1}$ with $t\geq 2$ and $n\geq t+2$.

Observe that ${\cal C}_{n,t}$ can be explicitly written as 
\[
\begin{split}
{\cal C}_{n,t}=\left\{K_s+\bar{K}_{n-s}: 0\leq s\leq t-1\right\}\cup 
\left\{ K_s+(K_{a,b}\dot\cup \bar{K}_{n-(a+b+s)}): 
\begin{matrix} s\geq 0, a,b\geq 3, \\
a+b+s\geq t+2, \\ 
a+s\leq t-1, \\
b+s\leq t-1. 
\end{matrix}
\right\}
\end{split}
\]
where $K_0$  is taken to be the empty graph. 
Lemmas~\ref{lem:cyclic_flats} and \ref{lem:abconing} imply that, for each abstract symmetric $t$-tensor matroid $M$ on $K_n$, every $F\in {\cal C}_{n,t}$ is a cyclic flat in $M$ and $r_M(F)=c_t(F)$.
We can use this observation  to obtain a necessary condition for independence in any abstract symmetric $t$-tensor matroid. 

We
say that a graph $G$ on $n$ vertices is {\em ${\cal C}_{t}$-independent}
if, for 
all subgraphs $H$ of $G$ 
and all $F\in {\cal C}_{n,t}$ with $H\subseteq F$, we have
\[
|E(H)|\leq c_t(F), 
\]
where $H\subseteq F$ is taken to mean that $F$ has a subgraph isomorphic to $H$. 

\begin{lemma}\label{lem:necind}
Let $n,t$ be positive integers
with $1\leq t\leq n-1$, $M$ be an abstract symmetric $t$-tensor matroid on $K_n$ and $G$ be a graph on $n$ vertices which is independent in $M$. Then  $G$ is ${\cal C}_{t}$-independent.
\end{lemma}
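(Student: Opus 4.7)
The plan is to invoke the observation stated immediately before the lemma, namely that for any abstract symmetric $t$-tensor matroid $M$ on $K_n$, every copy of each $F\in {\cal C}_{n,t}$ in $K_n$ is a cyclic flat of $M$ with rank $c_t(F)$. Combined with the fact that subsets of an independent set are independent, this should yield the bound $|E(H)|\leq c_t(F)$ almost immediately.

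Concretely, I would fix an arbitrary subgraph $H$ of $G$ and an arbitrary $F\in {\cal C}_{n,t}$ containing a subgraph isomorphic to $H$, and then produce a specific subgraph $F'$ of $K_n$ that is isomorphic to $F$ as a graph and \emph{literally} contains $H$ as a subgraph. Since $|V(F)|=n=|V(K_n)|$, this can be done by taking any isomorphism $\psi:H\to F[U]$ onto a copy of $H$ sitting on some $U\subseteq V(F)$, extending $\psi^{-1}|_U$ to a bijection $\pi:V(F)\to V(K_n)$ arbitrarily on $V(F)\setminus U$, and setting $F':=\pi(F)$. By construction $E(H)\subseteq E(F')$, and $F'$ is a copy of $F$ in $K_n$.

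By the observation from the paragraph preceding the lemma, $F'$ is a cyclic flat of $M$ with $r_M(E(F'))=c_t(F')=c_t(F)$, where the last equality holds since $c_t$ depends only on the isomorphism type (as is clear from its recursive definition). Because $G$ is independent in $M$ and $E(H)\subseteq E(G)$, the set $E(H)$ is also independent, so $r_M(E(H))=|E(H)|$. Monotonicity of the rank function then gives
\[
|E(H)|=r_M(E(H))\leq r_M(E(F'))=c_t(F),
\]
which is exactly the required inequality, and the lemma follows.

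There is no serious obstacle: all of the matroid-theoretic work has already been absorbed into Lemmas~\ref{lem:cyclic_flats} and \ref{lem:abconing} and the subsequent observation. The only point worth stating explicitly in the write-up is the isomorphism-invariance of $c_t$ and of the cyclic-flat property, which lets us replace the abstract isomorphic copy of $H$ inside $F$ by a concrete copy $F'$ of $F$ in $K_n$ that contains $E(H)$ as a subset.
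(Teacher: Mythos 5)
Your proof is correct and takes essentially the same route as the paper's, which argues the contrapositive: a violation of $\MC_t$-independence yields $H\subseteq G$ and $F\in\MC_{n,t}$ with $|E(H)|>c_t(F)=r_M(F)\geq r_M(H)$, so $H$ (and hence $G$) is dependent. Your only addition is to make explicit the replacement of the isomorphic containment $H\subseteq F$ by a literal copy $F'$ of $F$ in $K_n$ containing $E(H)$, a point the paper leaves implicit.
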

\begin{proof} 
   Suppose, for a contradiction, that $G$ is not  ${\cal C}_{t}$-independent.   Then there exists $H\subseteq G$ 
   and $F\in {\cal C}_{n,t}$ with $H\subseteq F$ and  
$|E(H)|> c_t(F)=r_{M}(F)\geq r_{M}(H)$. This implies that  $H$ is dependent in $M$ and hence 
$G$ is also dependent in $M$. 
   \end{proof}

We will show in Theorem \ref{thm:characterization} below, that ${\cal C}_{t}$-independence is equivalent to ${\cal S}_t$-independence  when $1\leq t\leq 5$. Our proof uses the following two results. {The first 
characterises when the
converse of Lemma \ref{lem:necind} holds.

\begin{lemma}\label{lem:suffind}
Let $n,t$ be positive integers with $2\leq t\leq n-1$ and $M$ be an abstract symmetric $t$-tensor matroid on $K_n$. Then the following two statements are equivalent.\\
(a)  Every ${\cal C}_{t}$-independent graph $G$ on $n$ vertices is independent in $M$.\\
(b) $\cl_M(C)\in \MC_{n,t}$ for all circuits $C$ of $M$.
\end{lemma}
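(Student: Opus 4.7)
The plan is to handle the two implications separately. For (b)$\Rightarrow$(a), suppose (b) holds and let $G$ be $M$-dependent on $n$ vertices. Then $G$ contains an $M$-circuit $C$, and $F:=\cl_M(C)\in\MC_{n,t}$ by (b). Using $r_M(F)=c_t(F)$ (noted immediately before Lemma~\ref{lem:necind}) together with $r_M(C)=|C|-1$, we get $c_t(F)=|C|-1$. Taking $H=G[C]$ as a subgraph of $G$, the inclusion $C\subseteq F$ in $K_n$ immediately yields the isomorphic containment $H\subseteq F$ required by the definition, while $|E(H)|=|C|=c_t(F)+1>c_t(F)$ contradicts the $\MC_t$-independence of $G$. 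Contrapositively, every $\MC_t$-independent graph is $M$-independent, which is (a).

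For the more substantial direction (a)$\Rightarrow$(b), the first step is to extract vertex-symmetry of $M$ from hypothesis (a). Combining Lemma~\ref{lem:necind} (which gives $M$-independent $\Rightarrow$ $\MC_t$-independent) with (a) shows that the two notions coincide, and since $\MC_t$-independence is isomorphism-invariant by its definition, so is $M$-independence; therefore $M$ is preserved by every permutation of $V(K_n)$. Now fix an $M$-circuit $C$, put $F:=\cl_M(C)$, and view $C$ as a graph on $n$ vertices (padded with isolated vertices). Since $C$ is $M$-dependent it is not $\MC_t$-independent, while every $C\setminus\{e\}$ is $M$-independent and hence $\MC_t$-independent by Lemma~\ref{lem:necind}. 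The witnessing subgraph must therefore use all edges of $C$, so there exists $F'\in\MC_{n,t}$ such that $G[C]$ is isomorphic to a subgraph of $F'$ and $|C|>c_t(F')$.

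Extend this embedding of $G[C]$ into $F'$ to a permutation $\sigma\in S_n$. Vertex-symmetry then gives that $\sigma(C)$ is an $M$-circuit with $\cl_M(\sigma(C))=\sigma(F)$, and by construction $\sigma(C)\subseteq F'$. Since $F'$ is a cyclic flat of $M$ by Lemmas~\ref{lem:cyclic_flats} and~\ref{lem:abconing}, we have $\sigma(F)\subseteq F'$; and the rank comparison $r_M(\sigma(F))=|C|-1\geq c_t(F')=r_M(F')$, together with this containment, forces $\sigma(F)=F'$. As $\MC_{n,t}$ is closed under vertex permutations (its defining list is up to isomorphism), $F=\sigma^{-1}(F')\in\MC_{n,t}$, which is (b). The main obstacle is the vertex-symmetry step: an abstract symmetric $t$-tensor matroid need not be $S_n$-invariant in general, and it is precisely the isomorphism-invariance of $\MC_t$-independence that lets us upgrade (a) to this strong symmetry statement; once that symmetry is in hand, the remaining closure-and-rank chase is routine.
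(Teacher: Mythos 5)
Your proposal is correct. The (b)$\Rightarrow$(a) direction is essentially identical to the paper's argument (contrapositive via a circuit $C\subseteq G$ with $\cl_M(C)=F\in\MC_{n,t}$ and $|C|=c_t(F)+1$). For (a)$\Rightarrow$(b) the skeleton is also the same as the paper's — minimality of the circuit forces the witnessing subgraph $H$ to equal $C$, and then a rank comparison together with the fact that the witness is a flat identifies $\cl_M(C)$ with it — but you insert an extra step that the paper does not: you first derive $S_n$-invariance of $M$ from (a) combined with Lemma~\ref{lem:necind}, and then use a permutation $\sigma$ to move $C$ into an actual copy of the witness $F'$ before pulling back. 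This step is there to bridge the gap between the ``isomorphic containment'' $H\subseteq F'$ in the definition of $\MC_t$-independence and the genuine set-containment $C\subseteq F$ needed to conclude $\cl_M(C)=F$. The paper handles the same subtlety implicitly, by in effect replacing $F'$ with a copy of the same isomorphism type that literally contains $C$ (legitimate because $\MC_{n,t}$ is closed under vertex permutations and $c_t$ depends only on the isomorphism type). Both resolutions are valid: yours makes the isomorphism issue fully explicit, at the cost of invoking a symmetry of $M$ that is only available because hypothesis (a) is in force; the paper's is lighter but leaves the re-choice of $F$ to the reader.
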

\begin{proof} 
   Suppose, (a) is true. Let $C$ be a circuit of $M$. Then $C$ is not $\MC_t$-independent so there exists $H\subseteq C$ and an $F\in \MC_{n,t}$ such that $H\subseteq F$  and $|E(H)|>c_t(F)=r_M(F)$. Lemma \ref{lem:necind} and the fact that every proper subgraph of $C$ is $M$-independent now tells us that $C=H$ and $r_M(C)=|E(C)|-1=r_M(F)$. Since $F$ is a flat of $M$, we have $\cl_M(C)=F\in \MC_{n,t}$. Hence (b) is true. 
   
   Suppose, (a) is false. Then there exists a $\MC_t$-independent graph $G$ on $n$ vertices which is not  independent in $M$. Let $C$ be a circuit of $M$ such that $C\subseteq G$ and put $F=\cl_M(C)$. Then $|E(C)|>r_M(C)=r_M(F)$. Since $G$ is   $\MC_t$-independent, $F\notin  \MC_{n,t}$.  Hence (b) is false.
   \end{proof}
}


Note that the idea of using the closures of all circuits to characterize independent sets generalizes Laman's count condition for independence in the generic two-dimensional rigidity matroid $\MR_2(K_n)$. This follows since ${\cal C}_n=\{K_a: 4\leq a\leq n\}$ is the list of  closures of all circuits in $\MR_2(K_n)$, their rank is given by $c_n(K_a)=2a-3$, and the corresponding notion of ${\cal C}$-independence is exactly Laman's count condition.

Our next result gives a lower bound on the degrees of minimally {\em $\MC_t$-dependent graphs}, i.e. graphs which are not $\MC_t$-independent  but are such that all their proper subgraphs 
are  $C_t$-independent.

\begin{lemma}\label{lem:mindep}
Suppose $G$ is a minimally $\MC_t$-dependent graph on $n$ vertices with at most ${t+1}\choose2$ edges for some $1\leq t\leq n-1$. Then\\  
(a) every edge of $G$ is incident with a vertex of degree at least two and\\
(b) $G$ has no isolated vertices.
\end{lemma}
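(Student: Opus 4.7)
The plan is to prove (a) and (b) by contradiction: I would assume the degree condition fails and derive that $G$ is $\MC_t$-independent, contradicting minimal dependence. Since every proper subgraph of $G$ is $\MC_t$-independent, the graph $G' := G - uv$ in case (a) (where $uv$ is an edge with both endpoints of degree $1$) or $G' := G - v$ in case (b) (where $v$ is an isolated vertex) is $\MC_t$-independent. I would start from a dependence witness $(H, F, \phi)$ for $G$: $H \subseteq G$, $F \in \MC_{n,t}$, $\phi : V(H) \hookrightarrow V(F)$, and $|E(H)| > c_t(F)$.

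If $uv \notin E(H)$ (in (a)) or $v \notin V(H)$ (in (b)), then $H \subseteq G'$ gives an immediate contradiction. Otherwise the degree assumptions force the removed vertices to carry no edges of $H' := H - uv$ (in (a)) or $H' := H - v$ (in (b)). The main task is to exhibit an $F^* \in \MC_{n',t}$ (with $n' = n$ or $n - 1$) satisfying $H' \subseteq F^*$ and $c_t(F^*) \le c_t(F) - 1$ in (a), or $c_t(F^*) \le c_t(F)$ in (b). Combined with $\MC_t$-independence of $G'$ applied to $H'$, this yields $|E(H')| \le c_t(F^*)$; together with $|E(H')| = |E(H)| - 1$ (in (a)) or $|E(H)|$ (in (b)) this contradicts $|E(H)| > c_t(F)$.

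The construction of $F^*$ proceeds by case analysis on the form of $F$ in the recursive description of $\MC_{n,t}$, together with the positions of the removed vertices. The unifying idea is to ``relocate'' those vertices---which carry no edges of $H'$---within the family $\MC_{n,t}$ to produce an $F^*$ of strictly smaller (or equal) rank. For $F = K_s + \bar K_{n-s}$ in (a), moving a $K_s$-endpoint of $\phi(u)\phi(v)$ to $\bar K$ gives $F^* = K_{s-1} + \bar K_{n-s+1}$, and one checks $c_t(F) - c_t(F^*) = t - s + 1 \ge 2$; analogous moves handle $F = K_s + (K_{a,b} \dot\cup \bar K)$ when $\phi(u)\phi(v)$ is incident to $K_s$. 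For part (b), if $F$ has a nonempty $\bar K$-part I would place $\phi(v)$ there (using its isolation to swap if needed), so that $F - \phi(v) \in \MC_{n-1,t}$ with $c_t$ unchanged; if $F$ has empty $\bar K$-part, so $F = K_s + (K_{a,b} \dot\cup \bar K_0)$ on $n = s + a + b$ vertices, then either $n > t + 2$, in which case $a, b \ge 4$ and $F - \phi(v)$ for $\phi(v) \in A$ remains in $\MC_{n-1, t}$ with strictly smaller $c_t$, or $n = t + 2$, in which case a direct edge count $|E(H)| \le |E(F)| - \deg_F(\phi(v)) \le c_t(F)$ shows that no $v$-isolating dependence witness exists.

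The hard part will be the tight subcase of (a) where $\phi(u)\phi(v) \in K_{a,b}$ and $s + a + b = t + 2$: the natural choice $K_s + (K_{a-1, b-1} \dot\cup \bar K)$ falls outside $\MC_{n,t}$ because $(s) + (a-1) + (b-1) < t + 2$. I would handle this (WLOG $a \le b$) in two branches. If $a \ge 4$, take $F^* = K_s + (K_{a-1, b+1} \dot\cup \bar K_{n-s-a-b})$, which stays in $\MC_{n,t}$ since $a \ge 4$ combined with tightness forces $b + s \le t - 2$; direct computation gives $c_t(F) - c_t(F^*) = b - a + 1 \ge 1$. If $a = 3$, note that $K_s \cup (A \setminus \{\phi(u)\})$ is a vertex cover of $F - \phi(u) - \phi(v)$ of size $s + 2$, so $H' \subseteq F^* := K_{s+2} + \bar K_{n-s-2}$, and direct computation gives $c_t(F) - c_t(F^*) = b - 2 \ge 1$.
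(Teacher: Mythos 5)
Your overall strategy is essentially the paper's in a different wrapper: the paper first uses minimality to conclude that the witness is $H=G$ with $|E(G)|=c_t(F)+1$ and then bounds each degree $d_G(v)=|E(G)|-|E(G-v)|$ below by a difference $c_t(F)-c_t(F')$ for a neighbouring member $F'$ of the family, whereas you package the same $c_t$-difference computations as a witness-transfer contradiction. Your handling of the tight subcase $\phi(u)\phi(v)\in K_{a,b}$ with $a+b+s=t+2$ (the $K_{a-1,b+1}$ move for $a\ge 4$, the vertex-cover move to $K_{s+2}+\bar K_{n-s-2}$ for $a=3$) is correct and is a nice alternative to the paper's counting argument there, and your part (b) analysis, including the edge count $|E(H)|\le |E(F)|-\deg_F(\phi(v))$ when the $\bar K$-part is empty, checks out.

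There are, however, two genuine gaps. First, you never invoke the hypothesis $|E(G)|\le {t+1\choose 2}$, and your case analysis omits $F=K_n$, which does belong to $\MC_{n,t}$ (it arises by iterated coning from $K_m\in\MC_{m,1}$ and appears, with $c_t(K_n)={t+1\choose 2}$, in the paper's enumerations for $t\le 5$). The hypothesis is essential: for $t=2$ the graph $4K_2$ is minimally $\MC_2$-dependent, witnessed by $H=G$ and $F=K_n$ with $c_2(K_n)=3$, and it violates both (a) and (b). The edge bound enters the proof precisely to kill this case, via $|E(H)|\le {t+1\choose 2}=c_t(K_n)$; without that line your argument would purport to prove a false statement. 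Second, in part (a) the subcase $F=K_s+(K_{a,b}\dot\cup\bar K_{n-a-b-s})$ with $\phi(u)\phi(v)$ incident to $K_s$ is not ``analogous'': the natural move $s\mapsto s-1$ yields $K_{s-1}+(K_{a,b}\dot\cup\bar K_{n-a-b-s+1})$, which leaves $\MC_{n,t}$ exactly when $a+b+s=t+2$ --- the same tightness obstruction you flagged on the $K_{a,b}$ side. It is repairable, e.g.\ $F^*=K_{s-1}+(K_{a+1,b}\dot\cup\bar K_{n-a-b-s})$ stays in $\MC_{n,t}$ and satisfies $c_t(F)-c_t(F^*)=t-s-b+1\ge 2$, but as written that step would fail.
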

\begin{proof}
  Since $G$ is not $\MC_t$-independent, there exists a subgraph $H$ of  $G$ and an $F\in \MC_{n,t}$ such that $H\subseteq F$ and $|E(H)|>c_t(F)$. The minimal dependency of $G$ now tells us that $G=H$ and $|E(G)|=c_t(F)+1$. This in turn tells us that $F\not\in \{K_n,\bar K_n\}$. 
  
The definition of ${\cal C}_{n,t}$ implies that either 
\begin{description}
    \item[(i)] $F=K_s+\bar{K}_{n-s}$ for some integer $s$ with $1\leq s\leq t-1$, or
    \item[(ii)] $F=K_s+(K_{a,b}\dot\cup \bar{K}_{n-(a+b+s)})$ for some integers $s, a, b$ with $s\geq 0$, $a,b\geq 3$, $a+b+s\geq t+2$, $a+s\leq t-1$, and $b+s\leq t-1$.
    \end{description}

Suppose (i) holds. Then $F=K_s+\bar{K}_{n-s}$ and  $|E(G)|=c_t(F)+1=st-{s\choose 2}+1$.
If $s=1$, then $G$ is isomorphic to $K_{1,t+1}$ and the lemma holds.
Hence we may assume $s\geq 2$.
Pick any vertex $v$ of $G$ in the $K_{s}$ subgraph of $F$.
Then $G-v$ is contained in a copy of $K_{s-1}+\bar{K}_{n-(s-1)}$. 
By the ${\cal C}_t$-independence of $G-v$ and $|E(G)|=c_t(F)+1$, we obtain $d_G(v)= |E(G)|-|E(G-v)|\geq c_t(K_{s}+\bar{K}_{n-s})+1-c_t(K_{s-1}+\bar{K}_{n-(s-1)})=st-{s\choose 2}+1 -(s-1)t-{s-1\choose 2}=t-s+2\geq 3$. Since $G\subseteq F$ and every edge of $F$ is incident to a vertex $v$ in the $K_s$-subgraph of $F$, this tells us that (a) holds for $G$. Part (b) follows similarly since, if $w$ is a vertex of $G$ in the $\bar K_{n-s}$ subgraph of $F$,
then $G-w$ is contained in a copy of $K_{s}+\bar{K}_{n-1-s}$ and 
the ${\cal C}_t$-independence of $G-w$ gives $d_G(w)= |E(G)|-|E(G-w)|\geq c_t(K_{s}+\bar{K}_{n-s})+1-c_{t}(K_{s}+\bar{K}_{n-s-1)})=1$.

Suppose (ii) holds. 
Then $F=K_s+(K_{a,b}\dot\cup \bar{K}_{n-(a+b+s)})$
and $|E(G)|=c_t(F)+1=ab-{a+b-(t-s)\choose 2}+st-{s\choose 2}+1$.
Observe that $F$ is an edge-disjoint union of $K_{a,b}$ and  $K_s+\bar{K}_{n-s}$.
Since $G\subseteq F$ and every edge of $F$ is incident to its $K_{a,b}$ or $K_s$ subgraph,
it suffices to show that each vertex of $G$ which belongs to the $K_{a,b}\dot \cup K_s$ subgraph of $F$ has degree at least two in $G$ and all other vertices have degree at least one. The latter assertion follows easily since, if $w$ is a vertex of $G$ which belongs to neither the $K_{a,b}$ nor the $K_s$ subgraph of $F$, then $G-w\subseteq F-w=K_s+(K_{a,b}\dot\cup \bar{K}_{n-1-(a+b+s)})$ and 
the ${\cal C}_t$-independence of $G-w$ gives $d_G(w)= |E(G)|-|E(G-w)|\geq 1$.
Consider the following two remaining cases.
\\[1mm]
Case 1: $a+b+s=t+2$.
Since $G$ is minimally $\MC_t$-dependent and $K_s+\bar{K}_{n-s}\in {\cal C}_{n,t}$,
the $K_s+\bar{K}_{n-s}$ subgraph of $F$ contains at most $st-{s\choose 2}+1$ edges of $G$. 
Since $a+b+s=t+2$, this implies that the $K_{a,b}$ subgraph contains at least $ab$ edges of $G$, and hence each vertex in this $K_{a,b}$ is incident to at least $\min\{a,b\}\geq 3$ edges of $G$.

Since the $K_{a,b}$ subgraph of $F$ can contain at most $ab$ edges of $G$, the
$K_s+\bar{K}_{n-s}$ subgraph contains at least $st-{s\choose 2}$ edges of $G$.
On other other hand, for any vertex $v$ of $G$ in the $K_{s}$ subgraph,
$G-v$ is contained in a copy of $K_{s-1}+\bar{K}_{n-(s-1)}$ and the ${\cal C}_t$-independence of $G-v$ implies that this 
$K_{s-1}+\bar{K}_{n-(s-1)}$ subgraph can contain 
at most $(s-1)t-{s-1\choose 2}$ edges of $G$.
Thus, $v$
is incident to at least $st-{s\choose 2}-\left((s-1)t-{s-1\choose 2}\right)=t-s+2\geq 3$ edges of $G$. 
This finishes the proof of Case 1.
\\[1mm]
Case 2: $a+b+s> t+2$.
Since $a+s\leq t-1$, $b+s\leq t-1$, and $a+b+s>t+2$, we have either $a\geq 4$ or  $b\geq 4$.
Without loss of generality, assume $a\geq 4$.
Then $(a-1)+b+s=a+b+(s-1)\geq t+2$ and
hence both $K_s+(K_{a-1,b}\dot\cup \bar{K}_{n-(a-1+b+s)})$
and $K_{s-1}+(K_{a,b}\dot\cup \bar{K}_{n-(a+b+(s-1))})$ belong to ${\cal C}_{n,t}$.
Let $v$ be a vertex of $G$ in the $K_{a,b}$ subgraph of $F$.
Then $G-v$ is contained in a $K_s+(K_{a-1,b}\dot\cup \bar{K}_{n-(a-1+b+s)})$,
and hence 
$d_G(v)\geq c_t(F)+1-c_t(K_s+(K_{a-1,b}\dot\cup \bar{K}_{n-(a-1+b+s)}))=t-s-a+2\geq 3$ by $a+s\leq t-1$.
On the other hand, if $v$ is a vertex of $G$ in the $K_s$-subgraph of $F$,
then $G-v$ is contained in a $K_{s-1}+(K_{a,b}\dot\cup \bar{K}_{n-(a+b+(s-1))})$,
and hence 
$d_G(v)\geq c_t(F)+1-c_t(K_{s-1}+(K_{a,b}\dot\cup \bar{K}_{n-(a+b+(s-1))}))=a+b+1\geq 7$.
This completes the proof of both Case 2 and the lemma.
\end{proof}

Lemma \ref{lem:mindep}(b) tells us that the $\MC_t$-independence of a graph $G$ is determined by its set of edges. In addition, if $\MC_t$-independence defines a matroid on $E(G)$, then the (edge-sets) of the minimally dependent subgraphs of $G$ will be the circuits in this matroid. 

\subsection{Maximality conjectures}
Graver's approach to abstract rigidity was to consider the family of abstract $d$-rigidity matroids on $K_n$ as a poset under the weak order of matroids. He conjectured in \cite{G91} that this poset has a unique maximal element, for any fixed $n$ and $d$, and that this maximal element is the generic $d$-dimension rigidity matroid $\MR_d(K_n)$. Graver verified his conjecture for $d=1,2$, but the second part of his conjecture was disproved for $d=4$ by N.~I.~Thurston, see \cite[Page 150]{GSS93}.  Subsequently  Whiteley \cite{W} disproved the conjecture for all $d\geq 4$  by showing that the generic cofactor matroid $\MC^{d-2}_{d-1}(K_n)\not \preceq \MR_d(K_n)$ when $d\geq 4$ and $n\geq 2d+4$. Whiteley offered the modified conjecture that $\MC^{d-2}_{d-1}(K_n)$ is the unique maximal abstract $d$-rigidity matroid on $K_n$ for all $n,d$. (It is easy to see that $\MC^{d-2}_{d-1}(K_n)= \MR_d(K_n)$ when $d=1,2$ and Whitely also conjectures that this identity extends to $d=3$.) Clinch et al.~\cite{CJT} recently verified Whiteley's maximality conjecture for $d=3$ by showing that $\MC^{1}_{2}(K_n)$ is the unique maximal abstract $3$-rigidity matroid on $K_n$ for all $n$.

Matroid duality immediately tells us that Graver's maximality conjecture is equivalent to the same conjecture for abstract symmetric $t$-tensor matroids. 

\begin{lemma}
Let $n,d$ be non-negative integers with $n\geq d+1$.
Then a matroid $M$ is the unique maximal abstract $d$-rigidity matroid on $E(K_n)$
if and only if its dual $M^*$ is the unique maximal abstract symmetric $t$-tensor matroid on $E(K_n)$, for $t=n-d-1$.
\end{lemma}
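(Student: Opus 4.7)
The plan is to show that, when restricted to matroids of a common rank on a common ground set, the duality operation $M \mapsto M^*$ is an order-isomorphism for the weak order. Applying this to the two families in question, which by Lemma~\ref{lem:dual} are exchanged by duality, then gives the stated equivalence of maximality statements.

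First I would recall the standard rank-complement formula for the dual matroid:
\[
r_{M^*}(X)=|X|-r_M(E)+r_M(E\setminus X)
\]
for every $X\subseteq E$. I would then verify the key observation: if $M_1$ and $M_2$ are matroids on the same ground set $E$ with $r_{M_1}(E)=r_{M_2}(E)$ and $M_1\preceq M_2$ in the weak order (i.e.\ $r_{M_1}(X)\leq r_{M_2}(X)$ for every $X\subseteq E$), then $M_1^*\preceq M_2^*$. This is immediate from the formula above, since the common rank of the whole ground set cancels.

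Next I would specialise this to our situation. By axiom (R1), every abstract $d$-rigidity matroid on $E(K_n)$ has rank $dn-\binom{d+1}{2}$, and by axiom (R1*), every abstract symmetric $t$-tensor matroid on $E(K_n)$ has rank $\binom{t+1}{2}$. Thus the common-rank hypothesis holds within each family, and combining with Lemma~\ref{lem:dual}, the map $M\mapsto M^*$ is a bijection between the family of abstract $d$-rigidity matroids on $E(K_n)$ and the family of abstract symmetric $(n-d-1)$-tensor matroids on $E(K_n)$. Applying the key observation in both directions (using $(M^*)^*=M$) shows this bijection is an order-isomorphism: $M_1\preceq M_2$ if and only if $M_1^*\preceq M_2^*$.

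Finally I would conclude that any order-isomorphism between two posets preserves unique maximal elements, so $M$ is the unique maximal abstract $d$-rigidity matroid on $E(K_n)$ if and only if $M^*$ is the unique maximal abstract symmetric $(n-d-1)$-tensor matroid on $E(K_n)$. There is essentially no obstacle here: the whole argument reduces to the rank-complement formula together with the fact, already recorded, that both families have a common rank. The one point worth checking carefully is that the bijection lands in the right family, which is exactly the content of Lemma~\ref{lem:dual}.
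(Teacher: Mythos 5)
Your proposal is correct, and it is exactly the argument the authors have in mind: the paper states this lemma without proof, treating it as an immediate consequence of matroid duality, and your write-up (rank-complement formula, common rank within each family via (R1)/(R1*), the bijection from Lemma~\ref{lem:dual}, and order-isomorphisms preserving unique maximal elements) fills in precisely the intended details.
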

We will show in Theorem \ref{thm:characterization} below that $\MS_t$-independence  is equivalent to  $\MC_{t}$-independence  when $1\leq t\leq 5$. Together with Lemma \ref{lem:necind}, this implies that  the dual form of Graver's conjecture holds for all $1\leq t\leq 5$. 

The approach in \cite{CJT} is to consider an extension of the family of abstract $d$-rigidity matroids to the family of {\em $K_{d+2}$-matroids} i.e.~the family of all matroids on $K_n$ which satisfy (R2). They conjecture that $\MC^{d-2}_{d-1}(K_n)$ is the unique maximal $K_{d+2}$-matroid on $K_n$ for all $n,d$ and prove their conjecture for $d= 3$.

We can try a similar approach to the dual form of Graver's conjecture.
Since $K_{1,t+1}$ is a circuit in every abstract symmetric $t$-tensor matroid by (R3$^*$), it is tempting to ask whether the family of all $K_{1,t+1}$-matroids on $K_n$ has a unique maximal  element for all $n,t$. Theorem \ref{thm:K1t} below shows that this is false when $t\geq 4$. 
We believe however that the unique maximality property does hold if we use the circuits given by Lemma \ref{lem:dual_circuits} to define our family. 

\begin{conjecture}
Let $n,t$ be non-negative integers with $n\geq t+1$ and
let ${\cal X}=\{K_{a,b}: a,b\geq 1, a+b=t+2\}$.
Then there is a unique maximal ${\cal X}$-matroid on $K_n$.
\end{conjecture}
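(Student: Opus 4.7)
The natural candidate for the unique maximum is the symmetric tensor matroid $\MS_t(K_n)$ itself, which lies in the family by Lemma~\ref{lem:dual_circuits}. The goal is then to show that $M\preceq \MS_t(K_n)$ for every $\MX$-matroid $M$ on $E(K_n)$. Matroid duality recasts this as showing that $\MR_{n-t-1}(K_n)=\MS_t(K_n)^*$ is the unique weak-order minimum among matroids $N$ on $E(K_n)$ in which every $K_{a,b}$ with $a+b=t+2$ is a cocircuit. This dual family sits strictly inside Whiteley's family of $K_{d+2}$-matroids for $d=n-t-1$, since it imposes additional cocircuit constraints beyond the single $K_{1,d+1}$ condition; intuitively, the richer cocircuit structure should rigidify the family enough to pin down the unique minimum.

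I would attempt the proof by induction on $t$. The base case $t=1$ is trivial: $\MX=\{K_{1,2}\}$ forces every two adjacent edges to be a circuit, which via iterated circuit elimination collapses $M$ to the rank-one matroid on $E(K_n)$, so uniqueness is immediate. For the inductive step, given an $\MX$-matroid $M$ on $E(K_n)$ with $t\geq 2$ and a vertex $v$, the first task is to verify that $N:=M/E_v$ is an $\MX'$-matroid on $E(K_{n-1})$ for $\MX'=\{K_{a,b}: a,b\geq 1,\,a+b=t+1\}$. For such a $K_{a,b}$ with vertex classes $A,B\subseteq V(K_n)\setminus\{v\}$, the graph $K_{a+1,b}$ obtained by joining $v$ to the $a$-side is a circuit of $M$; contracting $E_v$ leaves $K_{a,b}$ dependent in $N$, and a minimal-counterexample argument combined with the fact that every proper subgraph of $K_{a,b}$ extends (via $v$) to a proper subgraph of the original circuit should upgrade dependence to minimal dependence. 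Once $N$ is known to be an $\MX'$-matroid, the inductive hypothesis yields $N\preceq \MS_{t-1}(K_{n-1})$. Combining with the coning identity $\MS_t(K_n)/E_v=\MS_{t-1}(K_{n-1})$, which follows from Lemma~\ref{lem:abconing}, Theorem~\ref{thm:dual}, and the standard fact $\MR_d(K_n)-E_v=\MR_d(K_{n-1})$, gives $M/E_v\preceq \MS_t(K_n)/E_v$ for every $v\in V(K_n)$.

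The hardest part will be promoting this vertex-by-vertex comparison to a global conclusion $M\preceq \MS_t(K_n)$. A priori, knowing $M/E_v\preceq \MS_t(K_n)/E_v$ for every $v$ does not immediately yield the corresponding statement before contraction, because an $\MS_t(K_n)$-circuit that meets every star $E_v$ non-trivially need not survive any single contraction intact. To bridge this gap, I would try to develop a combinatorial description of circuits of $\MS_t(K_n)$ in terms of their intersections with vertex stars and then use circuit elimination in $M$, feeding in the rich stock of $K_{a,b}$-circuits available in every $\MX$-matroid, to rebuild each $\MS_t(K_n)$-circuit as a union of $M$-circuits. A further obstacle is that an arbitrary $\MX$-matroid need not satisfy (R1$^*$), so its rank may in principle exceed ${t+1\choose 2}$; a preliminary lemma would be required to show that the family of $K_{a,b}$-circuits, via repeated circuit elimination, forces $M$ to be a quotient of some abstract symmetric $t$-tensor matroid, after which Theorem~\ref{thm:characterization} closes the case $1\leq t\leq 5$ unconditionally and the general case reduces to the dual form of Whiteley's conjecture restricted to this smaller family. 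For these reasons I would first attempt to establish the conjecture in the range $1\leq t\leq 5$, where the circuit structure of $\MS_t(K_n)$ is explicitly available, before pushing the induction to larger $t$.
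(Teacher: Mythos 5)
This statement is posed as an open conjecture in the paper; no proof is given there, so the only question is whether your proposal actually closes it. It does not, and beyond the gaps you acknowledge yourself (upgrading dependence of $K_{a,b}$ in $M/E_v$ to minimal dependence for an \emph{arbitrary} $\MX$-matroid $M$, and promoting the vertex-by-vertex comparison $M/E_v\preceq \MS_t(K_n)/E_v$ to $M\preceq \MS_t(K_n)$), there is a more basic problem: your candidate for the unique maximum is provably wrong in a range of parameters. If $\MS_t(K_n)$ were the unique maximal $\MX$-matroid, then in particular every abstract symmetric $t$-tensor matroid --- all of which are $\MX$-matroids by Lemma~\ref{lem:dual_circuits} --- would satisfy $M\preceq\MS_t(K_n)$. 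Since all these matroids have the same rank ${t+1\choose 2}$, duality preserves the weak order on this subfamily (this is exactly the unnumbered lemma in Section~2.3), so this would say that $\MR_d(K_n)$ is the unique maximal abstract $d$-rigidity matroid for $d=n-t-1$. Whiteley's counterexample to Graver's conjecture, quoted in Section~2.3, gives $\MC^{d-2}_{d-1}(K_n)\not\preceq\MR_d(K_n)$ for $d\geq 4$ and $n\geq 2d+4$; translating, for $t\geq 7$ and $t+5\leq n\leq 2t-2$ the matroid $\MS_t(K_n)$ is certainly not the maximum of the family. Any correct proof must start from a different candidate (presumably the dual of the cofactor matroid, or a matroid defined intrinsically such as a free elevation as in the Appendix).

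Second, your claim that Theorem~\ref{thm:characterization} ``closes the case $1\leq t\leq 5$ unconditionally'' conflates two different families. That theorem, together with Lemma~\ref{lem:necind}, shows that $\MS_t(K_n)$ is the unique maximal \emph{abstract symmetric $t$-tensor matroid}, i.e.\ the unique maximum among those $\MX$-matroids which also satisfy the rank axiom (R1$^*$). The conjecture concerns all $\MX$-matroids with no rank restriction, and the paper's own Theorem~\ref{thm:K1t} is an explicit warning that passing from a rank-constrained family to the larger family defined only by circuit conditions can destroy unique maximality: the $K_{1,t+1}$-matroids have two distinct maximal elements for every $t\geq 4$. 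Whether the additional circuits $K_{a,b}$ with $a,b\geq 2$ suffice to restore a unique maximum is precisely the open content of the conjecture, even for $t\leq 5$, and nothing in your sketch addresses it; your proposed reduction ``after which Theorem~\ref{thm:characterization} closes the case'' would need, as a prerequisite, exactly the kind of rank bound and circuit-closure analysis (in the style of Lemmas~\ref{lem:upper1}--\ref{lem:free_elevation1} of the Appendix) that constitutes the real difficulty.
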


The remainder of this section is concerned with stating and proving Theorem \ref{thm:K1t}. 

\begin{theorem}\label{thm:K1t}
There exist two distinct maximal $K_{1,t+1}$-matroids on $K_{n}$ for all $t\geq 4$ and $n\geq t+2$.
\end{theorem}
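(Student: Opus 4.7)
The plan is to construct two explicit $K_{1,t+1}$-matroids $M_1$ and $M_2$ on $E(K_n)$ that are distinct and both maximal in the family. A natural candidate for $M_1$ is a maximal $K_{1,t+1}$-matroid with $M_1\succeq \MS_t(K_n)$ in the weak order; recall that by Lemma~\ref{lem:dual_circuits}, $\MS_t(K_n)$ has every $K_{a,b}$ with $a+b=t+2$ as a circuit, and for $t\geq 4$ these include the graphs $K_{2,t}$, $K_{3,t-1}$, and so on, in addition to the star $K_{1,t+1}$. The axiom (R3$^*$) alone, however, only forces $K_{1,t+1}$ to be a circuit, so there should be room for a second $K_{1,t+1}$-matroid $N$ in which some $K_{a,b}$ with $a,b\geq 2$ and $a+b=t+2$ is independent rather than a circuit. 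Such a graph (for instance $K_{2,t}$) has maximum degree $t$ and thus contains no $K_{1,t+1}$ as a subgraph, so its independence is compatible with (R3$^*$).

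The main constructive step is to exhibit such an $N$ explicitly. One natural approach is to build $N$ by a generic linear realization of the vertices of $K_n$ in a vector space, possibly of dimension strictly greater than $t$, assigning to each edge a tensor-like vector chosen so that each star $K_{1,t+1}$ picks up a dependence by a local dimensional argument at the center vertex, while the bipartite graph $K_{2,t}$ does not. Alternatively one can build $N$ combinatorially by specifying its circuits directly and checking the circuit exchange axiom, perhaps via a matroid-union or transversal-type construction that captures exactly the star dependencies. Once $N$ is in hand, the finiteness of the set of matroids on $E(K_n)$ guarantees the existence of a maximal $K_{1,t+1}$-matroid $M_2$ with $M_2\succeq N$.

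To conclude $M_1\neq M_2$ we observe that $K_{2,t}$ is independent in $N$, and hence also in any weak-order extension $M_2\succeq N$, so what remains is to show that $K_{2,t}$ is still dependent in $M_1$. I expect this step to be the main obstacle: one must argue that the circuit structure inherited from $\MS_t(K_n)$, combined with (R3$^*$), prevents any maximal extension $M_1\succeq\MS_t(K_n)$ from making $K_{2,t}$ independent. A direct attack is to examine what happens if the circuit $K_{2,t}$ is relaxed inside a matroid that already has all the other $K_{a,b}$ with $a+b=t+2$ as circuits and all $K_{1,t+1}$ as circuits, and to show that the resulting object must violate the circuit elimination axiom unless some $K_{1,t+1}$ also loses its circuit status. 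If this rigidity can be established, then $K_{2,t}$ genuinely distinguishes $M_1$ from $M_2$ and the theorem follows; otherwise one can fall back on adjusting the choice of separating graph among the other $K_{a,b}$ circuits of $\MS_t(K_n)$, using the fact that for $t\geq 4$ there are at least two such bipartite circuits available to play against each other.
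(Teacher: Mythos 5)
Your proposal is a plan rather than a proof: the two steps on which everything rests are both left open. First, you never actually construct the matroid $N$ in which $K_{2,t}$ is independent; you only gesture at ``a generic linear realization \dots possibly of dimension strictly greater than $t$'' or ``a matroid-union or transversal-type construction'', with no verification that either yields a matroid satisfying (R3$^*$) while keeping $K_{2,t}$ independent. Second, you explicitly flag as ``the main obstacle'' the claim that $K_{2,t}$ remains dependent in every maximal $M_1\succeq\MS_t(K_n)$, and you do not close it. This is a real obstacle: in the weak order, passing to a larger matroid can turn circuits of $\MS_t(K_n)$ into independent sets, so the fact that $K_{2,t}$ is a circuit of $\MS_t(K_n)$ (Lemma~\ref{lem:dual_circuits}) gives you nothing about $M_1$. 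Without both steps the two candidate matroids are not shown to be distinct, so the theorem is not proved.

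It is worth contrasting this with the paper's argument, which sidesteps the need to find a separating set altogether. The paper takes $M_1$ to be the free elevation of the rank-$(t+1)$ uniform $K_{1,t+1}$-matroid; by results quoted from \cite{JT,CJT}, $M_1$ is a maximal $K_{1,t+1}$-matroid and every cyclic flat of $M_1$ is a union of copies of $K_{1,t+1}$. Since $K_{3,t-1}$ contains no copy of $K_{1,t+1}$ (its maximum degree is $t-1$) but must be a closed circuit in every abstract symmetric $t$-tensor matroid when $t\geq 4$, the matroid $M_1$ is not an abstract symmetric $t$-tensor matroid, and since it satisfies (R3$^*$) it must fail (R1$^*$), i.e.\ $\rank M_1<\binom{t+1}{2}$. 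But $\MS_t(K_n)$ is a $K_{1,t+1}$-matroid of rank exactly $\binom{t+1}{2}$, so $M_1$ cannot lie above it in the weak order and hence cannot be the unique maximal element. The lesson is that a pure rank comparison between one explicitly understood maximal matroid and one other member of the family suffices; no analysis of which specific $K_{a,b}$ is or is not dependent in a hypothetical maximal extension of $\MS_t(K_n)$ is needed. If you want to salvage your approach, the most promising route is to replace your unconstructed $N$ by this free elevation and replace your separation step by the rank argument.
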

\begin{proof} We refer the reader to the appendix for terminology not explicitly defined in this proof.
Let $M_0$ be the rank $t+1$ matroid on $K_{n}$ in which the set of non-spanning circuits is the set of all copies of $K_{1,t+1}$ in $K_{n}$ and let $M_1$ be the {\em free elevation} of $M_0$. It follows from \cite[Lemmas 2.7, 3.6]{JT} that $M_0$ is indeed a matroid and $\rank M_1\leq {{t+1}\choose2}$.  In addition, we may use  \cite[Lemmas 3.1, 3.8]{CJT} to deduce that  $M_1$ is a maximal  $K_{1,t+1}$-matroid on $K_{n}$ and every cyclic flat of $M_1$ is the union of copies of $K_{1,t+1}$.  Lemma \ref{lem:dual_circuits} and the hypothesis that $t\geq 4$ imply that $K_{3,t-1}$ is a closed circuit in every abstract symmetric $t$-tensor matroid. Since  $K_{3,t-1}$ does not contain a  copy of $K_{1,t+1}$,  this tells us that $M_1$ is not an abstract symmetric $t$-tensor matroid. Together with the fact that $M_1$ is a $K_{1,t+1}$-matroid and hence satisfies (R3$^*$), this implies that $M_1$ does not satisify (R1$^*$). Hence $\rank M_1<{{t+1}\choose2}$. This in turn implies that $M_1$ cannot be the unique maximal $K_{1,t+1}$-matroid on $K_{n}$
since $\MS_{t}(K_{n})$ is a $K_{1,t+1}$-matroid on $K_{n}$ of rank ${t+1}\choose2$.
\end{proof}

Further results on the matroid $M_1$ defined in the proof of Theorem \ref{thm:K1t} are given in the Appendix. 
We show that $M_1$ is the unique maximal $K_{1,t+1}$-matroid on $K_{n}$ when $t= 2,3$ and determine the lattice of cyclic flats in $M_1$ for all $t$ and all $n\geq 2\lfloor (2t+1)/3\rfloor$.

In response to a question of Mason~\cite{M}, Las Vergnas~\cite{Las} showed that the family of quasi tensor powers of $U_{3,4}$ has more than one maximal matroid.
Our result gives the symmetric analogue:
the family of second quasi symmetric  powers of $U_{t,n}$ has a unique maximal matroid if $t\leq 3$ and does not if $t\geq 4$ and $n\geq 2\lfloor (2t+1)/3\rfloor$.

\section{Exploring Duality between ${\cal R}_d$ and  ${\cal S}_t$}\label{sec:S}
In this section we introduce a graph operation for preserving ${\cal S}_t$-independence. This operation generalises several well known graph operations 
 for preserving ${\cal R}_d$-independence, such as 0-extension, 1-extension, and vertex-splitting, by the duality between ${\cal R}_d(K_n)$ and ${\cal S}_{n-d-1}(K_n)$ given in  Theorem~\ref{thm:dual}. 
We will then use this operation to characterise independence in ${\cal S}_t$ for $t\leq 5$.

\subsection{An operation for preserving ${\cal S}_t$-independence}

Let $G=(V,E)$ be a graph, $e_1, e_2,\dots, e_t\in E$.
We say that a vertex labelling $\ell:V\to \{1,2,\dots, t-1\}\cup\{\infty\}$ is {\em valid} with respect to $e_1, e_2,\dots, e_t$ if 
\begin{description}
\item[(i)] for each $e_i=u_iv_i$ with $i=1,\dots, t-1$, $i=\ell(u_i)<\ell(v_i)=\infty$, 
\item[(ii)]  for $e_t=u_tv_t$, $\ell(u_t)=\infty=\ell(v_t)$,
\item[(iii)] for other vertices $w$ (i.e., those not incident to any $e_i$), $\ell(w)=t-1$, and
\item[(iv)] for each $i=1,\dots, t$ and each endvertex $v_i$ of $e_i$ with $\ell(v_i)=\infty$,
the label of each neighbour of $v_i$ in $G-\{e_i, e_{i+1},\dots, e_t\}$ is less than  $i$.
\end{description}

The construction of $G=(V,E)$ from $G-\{e_1,\dots, e_t\}$ with a valid vertex labelling captures several well known graph operations from rigidity theory on the complementary graph $\bar G=(V,K(V)\sm E)$. 
We give some examples below.
\begin{example}
Let $v$ be a vertex of degree $t$ and $u_1,\dots, u_t$ be the neighbours of $v$.
Denote $e_i=vu_i$.
Then the labelling $\ell$ with $\ell(u_i)=i$ for $i=1,\dots, t-1$,
$\ell(u_t)=\ell(v)=\infty$, and $\ell(w)=t-1$ for other vertices $w$ is a valid labelling.
See Figure~\ref{fig:labelling}(a).
The complement of the construction of $G$ from $G-\{e_1,\dots, e_t\}$
corresponds to the ``$d$-dimensional 0-extension" with $d=n-t-1$, which constructs $\overline G$ from $\overline{G-v}$ by adding the vertex $v$ and $d$ edges from $v$ to $V-v$.
\end{example}
\begin{example}
 Let $v$ be a vertex of degree $t-1$, $u_1,\dots, u_t$ be the neighbours of $v$, $e_i=vu_i$ for $i=1,\dots, t-1$, and $e_t$ be an edge disjoint from $e_1,\dots, e_{t-1}$.
Denote the endvertices of $e_t$ by $u_t^1, u_t^2$.
Then the labelling $\ell$ with $\ell(u_i)=i$ for $i=1,\dots, t-1$,
$\ell(v)=\ell(u_t^1)=\ell(u_t^2)=\infty$ and $\ell(w)=t-1$ for all other vertices $w$ is a valid labelling.
See Figure~\ref{fig:labelling}(b).
The complement of the construction of $G$ from $G-\{e_1,\dots, e_t\}$
corresponds to the ``$d$-dimensional 1-extension operation" with $d=n-t-1$,  which constructs $\overline G$ from $\overline{G-v}$ by deleting $e_t$, adding the vertex $v$ and $d+1$ edges from $v$ to $u_t,v_t$ and $d-1$ other vertices of $V-v$.
\end{example}
\begin{example}
Suppose there are two non-adjacent vertices $v_1, v_2$ with $|N_G(v_1)\cup N_G(v_2)|=t$.
Denote $N_G(v_1)\cup N_G(v_2)=\{u_1, u_2,\dots, u_t\}$ such that 
$N_G(v_1)=\{u_1,\dots, u_{k_1}\}$
and $N_G(v_2)=\{u_{k_2} ,\dots u_t\}$,
and let $e_1=v_1u_1,\dots, e_{k_1}=v_1u_{k_1}, e_{k_1+1}=v_2u_{k_1+1}, \dots, e_t=v_2u_t$.
Then the labelling $\ell$ with $\ell(u_i)=i$ for $i=1,\dots, t-1$,
$\ell(u_t)=\ell(v_1)=\ell(v_2)=\infty$,
and $\ell(w)=t-1$ for all other vertices $w$ is a valid labelling.
See Figure~\ref{fig:labelling}(c,d).
The complement of the construction of $G$ from $G-\{e_1,\dots, e_t\}$
corresponds to the version of the  ``$d$-dimensional vertex splitting  operation" with $d=n-t-1$, which constructs  $\overline G$ from $\overline{G-v_1}$ by `splitting' the vertex $v_1$ away from $v_2$ and adding the edge $v_1v_2$. The valid vertex labelling which corresponds to the version of vertex splitting which does not add the edge $v_1v_2$ is illustrated in Figure~\ref{fig:labelling}(e).
\end{example}

Further examples for $t=5$ are illustrated in Figure~\ref{fig:labelling}(f,g,h).

\begin{figure}[t]
\centering
\begin{minipage}{0.32\textwidth}
\centering
\includegraphics[scale=0.55]{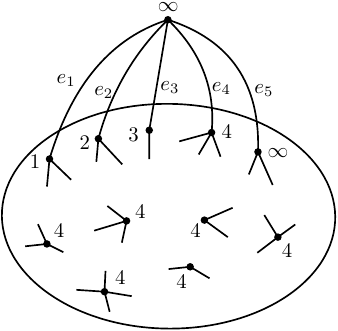}
\par
(a)
\end{minipage}
\begin{minipage}{0.32\textwidth}
\centering
\includegraphics[scale=0.55]{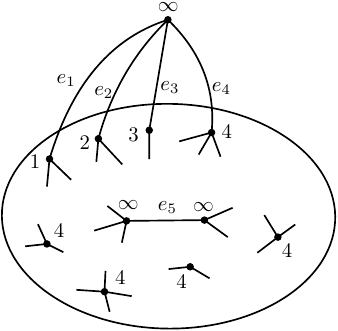}
\par
(b)
\end{minipage}
\begin{minipage}{0.32\textwidth}
\centering
\includegraphics[scale=0.55]{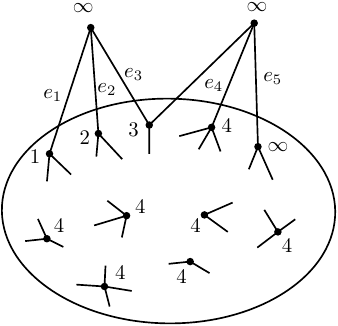}
\par
(c)
\end{minipage}
\begin{minipage}{0.32\textwidth}
\centering
\includegraphics[scale=0.55]{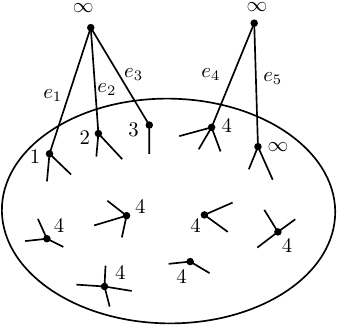}
\par
(d)
\end{minipage}
\begin{minipage}{0.32\textwidth}
\centering
\includegraphics[scale=0.55]{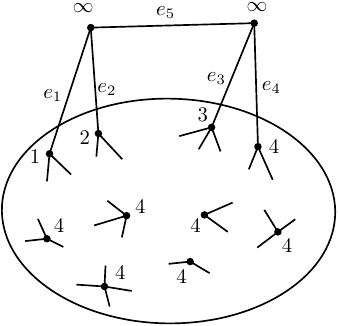}
\par
(e)
\end{minipage}
\begin{minipage}{0.32\textwidth}
\centering
\includegraphics[scale=0.55]{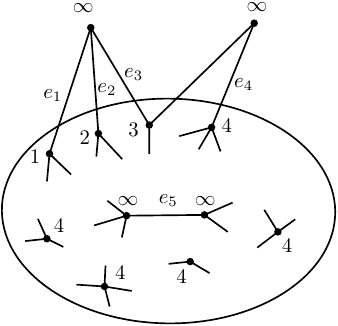}
\par
(f)
\end{minipage}
\begin{minipage}{0.32\textwidth}
\centering
\includegraphics[scale=0.55]{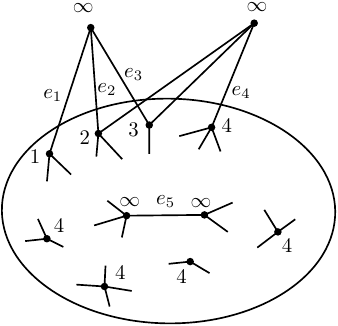}
\par
(g)
\end{minipage}
\begin{minipage}{0.32\textwidth}
\centering
\includegraphics[scale=0.55]{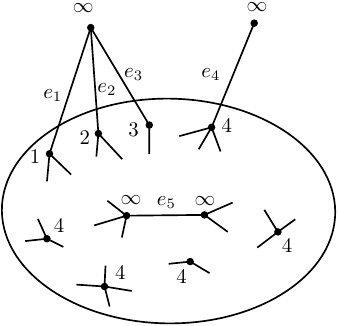}
\par
(h)
\end{minipage}
\caption{Examples of valid labellings for $t=5$.}
\label{fig:labelling}
\end{figure}

Henceforth in this section, we will simplify notation by denoting
the symmetric $2$-tensor of two points $p_1, p_2 \in \mathbb{R}^t$ by $p_1p_2$.
Note that $p_1p_2=p_2p_1$.
Let $\be_1,\dots, \be_t$ be the standard basis of $\mathbb{R}^t$.
Then $\{\be_i\be_j: 1\leq i\leq j\leq t\}$ is a basis for the vector space of all symmetric 2-tensors 
of pairs of points in $\mathbb{R}^t$.

\begin{lemma}\label{lem:operations}
Let $G=(V,E)$ be a graph with $n\geq t+1$ vertices for some positive integer $t$,
and $\ell$ be a valid labelling of $G$ with respect to $t$ distinct edges $e_1,\dots, e_t\in E$.
If $E\sm \{e_1,\dots, e_t\}$ is a base in ${\cal S}_{t-1}(K_n)$,
then $E$ is a base in ${\cal S}_t(K_n)$.
\end{lemma}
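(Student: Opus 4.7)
The plan is to exhibit an explicit realisation $p:V\to\R^t$ under which the symmetric $2$-tensors $\{p_u p_v:uv\in E\}$ span $\mathrm{Sym}^2(\R^t)$; since $|E|=\binom{t+1}{2}=\dim\mathrm{Sym}^2(\R^t)$, this will force them to be a basis and hence $E$ to be a base of $\MS_t(K_n)$.

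I would start by invoking the hypothesis to pick a generic $p':V\to\R^{t-1}$ so that $\{p'_u p'_v:uv\in E\setminus\{e_1,\ldots,e_t\}\}$ is a basis of $\mathrm{Sym}^2(\R^{t-1})$. Since the vertices $u_1,\ldots,u_{t-1}$ are distinct (they carry distinct finite labels) and $p'$ is generic, $p'_{u_1},\ldots,p'_{u_{t-1}}$ are linearly independent; after a linear change of basis of $\R^{t-1}$ we may assume $p'_{u_i}=\be_i$. Extend to $p:V\to\R^t$ by $p_v=(p'_v,q_v)$, with $q_v=0$ when $\ell(v)<\infty$ and $q_v\in\R$ a free parameter when $\ell(v)=\infty$. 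Under the decomposition $\mathrm{Sym}^2(\R^t)=\mathrm{Sym}^2(\R^{t-1})\oplus U\oplus\R\,\be_t\be_t$, where $U=\mathrm{span}\{\be_i\be_t:i<t\}$, the tensor $p_u p_v$ splits as $(p'_u p'_v,\,q_u p'_v+q_v p'_u,\,2q_u q_v)$. Classify each edge by its $q$-degree (number of endvertices in $V_\infty:=\ell^{-1}(\infty)$); condition~(iv) forces the only $q$-degree $2$ edge in $E$ to be $e_t$, because any other such edge $vw$ would require, via (iv) applied to $v$ with $i^*(v):=\min\{i:v\in e_i\}$, that $\ell(w)<i^*(v)\le t$, contradicting $\ell(w)=\infty$.

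Next I would exploit the resulting block upper-triangularity of the matrix $M$ whose columns are $\{p_u p_v:uv\in E\}$. Laplace expansion along the $\be_t\be_t$-row gives $\det M=2q_{u_t}q_{v_t}\det M'$, and the basis expansions $p'_{u_i}p'_{v_i}=\sum_{uv\in E\setminus\{e_1,\ldots,e_t\}}\alpha^{uv}_i(p'_u p'_v)$ let me eliminate the $\mathrm{Sym}^2(\R^{t-1})$-block of the columns $e_1,\ldots,e_{t-1}$ by column operations. This puts $M'$ into block-triangular form whose diagonal blocks are the nonsingular basis matrix $A$ of $\{p'_u p'_v:uv\in E\setminus\{e_1,\ldots,e_t\}\}$ and a $(t-1)\times(t-1)$ matrix $D$ whose $i$-th column equals
\[
q_{v_i}p'_{u_i}-\sum_{v\in V_\infty}q_v Z^v_i,\qquad Z^v_i:=\sum_{\substack{uv\in E\setminus\{e_1,\ldots,e_t\}\\ \ell(u)<\infty}}\alpha^{uv}_i\,p'_u.
\]
Hence $\det M=2q_{u_t}q_{v_t}\cdot\det A\cdot\det D$, and the problem reduces to showing $\det D\ne 0$ for generic $q_v$.

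The main obstacle (and crux of the argument) is this last step. The essential combinatorial input is condition~(iv): every $u$ appearing in $Z^v_i$ satisfies $\ell(u)<i^*(v)$; since each finite label $j\le t-2$ is borne by the unique vertex $u_j$, we get $u=u_{\ell(u)}$ and hence (after the change of basis) $Z^v_i\in\mathrm{span}(\be_1,\ldots,\be_{i^*(v)-1})$. Viewing $\det D$ as a polynomial in $\{q_v:v\in V_\infty\}$ and assuming the $v_k$'s are distinct (so $i^*(v_k)=k$), I would compute the coefficient of the monomial $q_{v_1}\cdots q_{v_{t-1}}$. Each contributing Leibniz term corresponds to a permutation $\sigma$ of $\{1,\ldots,t-1\}$ and a bijection $j:\{1,\ldots,t-1\}\to\{1,\ldots,t-1\}$ (specifying which $q_{v_k}$ is picked from the entry $D_{\sigma(i),i}$); the triangular structure forces $\sigma(i)\le j(i)$ for all $i$, with equality only when $\sigma(i)=j(i)=i$. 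Since $\sum\sigma(i)=\sum j(i)=\binom{t}{2}$, every inequality must be an equality, collapsing both $\sigma$ and $j$ to the identity and leaving coefficient $\prod_i 1=1$. Thus $\det D$ is a nonzero polynomial in the $q_v$'s, so generic $q_v$ give $\det M\ne 0$, completing the argument. The case in which some $v_k$'s coincide is handled by the same triangular counting applied to the appropriate multidegree form of the target monomial.
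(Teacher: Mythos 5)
Your argument is correct, and it reaches the conclusion by a genuinely different mechanism from the paper's, even though the underlying construction is the same. Both proofs start identically: normalise the generic $(t-1)$-dimensional realisation so that $p'_{u_i}=\be_i$, embed it into $\R^t$, and perturb only the $\infty$-labelled vertices in the $\be_t$-direction, with condition (iv) guaranteeing that for $v\in V_\infty$ every $E\sm\{e_1,\dots,e_t\}$-neighbour of $v$ is some $u_j$ with $j<i^*(v)$ (this is also why $e_t$ is the unique edge with both ends in $V_\infty$). The divergence is in the verification. The paper perturbs one edge at a time by a single small parameter $\lambda$ and proves inductively (their Claim inside the proof) that the span of the old edge tensors is unchanged while each new $e_k$ contributes $\be_k\be_t$ (resp.\ $\be_t\be_t$); this needs a "sufficiently small $\lambda$" argument at each stage. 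You instead turn on all perturbation parameters $q_v$ simultaneously and extract a single nonvanishing monomial from $\det M$: the block reduction $\det M=\pm q_{u_t}q_{v_t}\det A\cdot\det D$ is right, the containment $Z^v_i\in{\rm span}(\be_1,\dots,\be_{i^*(v)-1})$ is exactly where (iv) enters (and is vacuously true when $i^*(v)=t$, the only case where a label-$(t-1)$ neighbour need not be $u_{t-1}$), and the "$\sigma(i)\le j(i)$ with equality only at fixed points" count does collapse every Leibniz term except the identity, giving coefficient $1$ for $q_{v_1}\cdots q_{v_{t-1}}$. The one place you gloss over, the case of coincident $v_k$'s, does go through: writing $S=\{i:\sigma(i)=i,\,w_i=v_i\}$ and $T$ its complement, the multiset of chosen vertices over $T$ equals $\{v_k:k\in T\}$, so a bijection $\pi:T\to T$ gives $\sigma(i)\le\pi(i)-1$ on $T$ and summing forces $T=\emptyset$. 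What your route buys is a non-inductive, purely algebraic proof with an explicit certificate (a monomial of the determinant) in place of the paper's limit/perturbation bookkeeping; what it costs is the more delicate combinatorial accounting at the end, which the paper's step-by-step span argument avoids.
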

\begin{proof}
Put $G-\{e_{i+1},\dots, e_t\}=G_i=(V,E_i)$ for $i=0,\dots, t$ (where $G_t=G$).
Since $E_0$ is a base in ${\cal S}_{t-1}(K_n)$,
there is a $(t-1)$-dimensional point configuration $p:V\rightarrow \mathbb{R}^{t-1}$ 
such that $\{p(u)p(v): uv\in E_0\}$ forms 
a base of the space of symmetric $2$-tensors over $\mathbb{R}^{t-1}$.

Let $p_0:V\rightarrow \mathbb{R}^t$ be a $t$-dimensional pointconfiguration defined by $p_0(v)=\begin{pmatrix} p(v) \\ 0\end{pmatrix}$ for $v\in V$.
Then 
\begin{equation}\label{eq:operation1}
{\rm span}\{p_0(u)p_0(v): uv\in E(G_0)\}={\rm span}\{\be_a\be_b: 1\leq a\leq b\leq t-1\}.
\end{equation}

Denote $e_i=u_iv_i$.
By (i) and (ii), we have that $i=\ell(u_i)<\ell(v_i)=\infty$ for $i=1,\dots, t-1$ and $\ell(u_t)=\infty=\ell(v_t)$.
By a non-singular linear transformation,
we may assume that $p_0(u_i)=\be_i$ for $i=1,\dots, t-1$.
For $k=0,1\dots, t$, we will inductively construct a configuration $p_k:V\to \R^t$ so that $(G_k,p_k)$ has the following two properties.
\begin{description}
\item[(P1)] $p_k(w)=p_0(w)$ for all $w\in V$ with $\ell(w)\neq \infty$;
\item[(P2)] 
$\{p_k(u)p_k(v): uv\in E(G_k)\}$ is a basis for ${\rm span}\{\{\be_a\be_b: 1\leq a\leq b\leq t-1\}\cup \{\be_{a}\be_t: 1\leq a\leq k\}\}$.
\end{description}
The base case for the construction is $(G_0,p_0)$, which satisfies  properties (P1) and  (P2) by (\ref{eq:operation1}).

Suppose $(G_{k-1},p_{k-1})$ satisfies (P1) and (P2).
For any $\lambda\in \mathbb{R}$, define $p_{k,\lambda}:V\to \R^t$ by
\begin{equation}
p_{k,\lambda}(w)=\begin{cases}
    p_{k-1}(w)+\lambda \be_t & (\text{if $w$ is an endvertex of $e_k$ with $\ell(w)=\infty$)} \\ 
    p_{k-1}(w) & \text{(otherwise)}
\end{cases}
\end{equation}
for $w\in V(G)$.
Then $p_{k,\lambda}$ satisfies (P1) by definition.
We will show (P2) holds when $\lambda$ is sufficiently small. 
Let $A_{k-1}(\lambda)=\{p_{k,\lambda}(u)p_{k,\lambda}(v): uv\in E(G_{k-1})\}$.
Since $(G_{k-1},p_{k-1})$ satisfies (P2),   
$A_{k-1}(0)$ is a basis of 
${\rm span}\{\{\be_a\be_b: 1\leq a\leq b\leq t-1\}\cup \{\be_{a}\be_t: a\leq k-1\}\}$.
In particular  $A_{k-1}(0)$ is linearly independent and hence $A_{k-1}(\lambda)$ will  still be linearly independent for any sufficiently small $\lambda$. Choose a fixed $\lambda>0$ such that
$A_{k-1}(\lambda)$ is linearly independent. 
\begin{claim}\label{claim:operation}
$A_{k-1}(\lambda)$ is a basis of ${\rm span}\{\{\be_a\be_b: 1\leq a\leq b\leq t-1\}\cup \{\be_{a}\be_t: a\leq k-1\}\}$.
\end{claim}
\begin{proof}
The proof is split into two cases.

We first consider the case when $k\leq t-1$.
By property (iv) of the valid labelling $\ell$ and the assumption that $k\leq t-1$,  $\ell(w)<k\leq t-1$ for any $w\in N_{G_{k-1}}(v_k)$ (where $v_k$ is  the endvertex of $e_k$ with $\ell(v_k)=\infty$).
Hence, by properties (i), (ii) and (iii), for each $w\in N_{G_{k-1}(v_k)}$, we have $w=u_i$ for some $i\leq t-2$.
In particular, by (P1), $p_{k,\lambda}(w)=\be_{\ell(w)}$ with $\ell(w)\leq k-1$.
Therefore,
\[
p_{k,\lambda}(w)p_{k,\lambda}(v_k)\in {\rm span}\{\{\be_a\be_b: 1\leq a\leq b\leq t-1\}\cup \{\be_{a}\be_t: a\leq k-1\}\}
\]
for all $w\in N_{G_{k-1}}(v_k)$. Since $p_{k,\lambda}(w)=p_{k-1}(w)$ for all other vertices $w$, 
we conclude that, for any edge $uv$ in $G_{k-1}$,
$p_{k,\lambda}(u)p_{k,\lambda}(v)$ still belongs to 
${\rm span}\{\{\be_a\be_b: 1\leq a\leq b\leq t-1\}\cup \{\be_{a}\be_t: a\leq k-1\}\}$.
The claim now follows from 
the independence of $A_{k-1}(\lambda)$ and  dimension counting.

The case when $k=t$ is essentially identical.
The only difference  is that, for each $w\in N_{G_k}(v_k)$, we use (iii) and (P1) to represent  $p_{k,\lambda}(w)$ in the form
$p_{k,\lambda}(w)=\sum_{i=1}^{t-1}\mu_{w,i} \be_{i}$. 
\end{proof}

Note that $G_k$ is obtained from $G_{k-1}$ by adding $e_k=u_kv_k$.
Suppose $k\leq t-1$.
By (i) and (P1), $p_{k,\lambda}(u_k)=\be_k$
and $p_{k,\lambda}(v_k)=p_{k-1}(v_k)+\lambda \be_t$.
Since $\lambda\neq 0$, 
$p_{k,\lambda}(v_k)$ can be written as $p_{k,\lambda}(v_k)=\sum_{i=1}^t \mu_i \be_i$
for some scalars $\mu_i$ with $\mu_t\neq 0$.
Then,  $p_{k,\lambda}(u_k)p_{k,\lambda}(v_k)=\sum_{i=1}^t \mu_i \be_k \be_i$.
By Claim~\ref{claim:operation}, $\be_k\be_i$ is spanned by 
$A_{k-1}(\lambda)$
for all $i=1,\dots, t-1$.
Thus, by $\mu_t\neq 0$, $\be_k \be_t$ belongs to 
${\rm span}\{A_{k-1}(\lambda)\cup\{p_{k,\lambda}(u_k)p_{k,\lambda}(v_k)\}\}$, and (P2) holds for  $(G_k,p_{k,\lambda})$.

On the other hand, if $k=t$, then by (ii) and the construction,
$p_{k,\lambda}(u_t)=p_{k-1}(u_t)+\lambda\be_t$
and $p_{k,\lambda}(v_t)=p_{k-1}(v_t)+\lambda\be_t$.
Then $p_{k,\lambda}(u_t)p_{k,\lambda}(v_t)=
p_{k-1}(u_t)p_{k-1}(v_t)+p_{k-1}(u_t)\lambda\be_t+p_{k-1}(v_t)\lambda\be_t+\lambda^2\be_t\be_t$.
By Claim~\ref{claim:operation}, the first three terms belong to $A_{k-1}(\lambda)$,
and hence the fourth term $\be_t \be_t$ belongs to ${\rm span}\{A_{k-1}(\lambda)\cup\{p_{k,\lambda}(u_k)p_{k,\lambda}(v_k)\}\}$. Thus,  (P2) holds for  $(G_t,p_t)$.

Finally, by taking $k=t$, (P2)  implies the lemma.
\end{proof}

{We say that a graph $G=(V,E)$ is {\em $\MS_t$-independent} if $\rank \MS_t(G)=|E|$.}
We can use  Lemma \ref{lem:operations} to determine a large family of {\em $\MS_t$-independent graphs.

\begin{lemma}\label{lem:max_deg_two}
Suppose $t$ is a positive integer and $G=(V,E)$ is a graph with 
at most ${t+1}\choose2$ edges and 
maximum degree at most two. 
Then $G$ is  ${\cal S}_t$-independent.
    \end{lemma}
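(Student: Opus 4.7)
The plan is to prove the lemma by induction on $t$, using Lemma~\ref{lem:operations} as the inductive step. For $t=1$, the claim is immediate since $\binom{2}{2}=1$ and a single edge is trivially $\MS_1$-independent. For $t\ge 2$, I first reduce to the case $|E(G)|=\binom{t+1}{2}$: extend $G$ by adding isolated $K_2$-edges on new vertices to obtain a supergraph $G'$ with exactly $\binom{t+1}{2}$ edges and still maximum degree at most two. Since $\MS_t$-independence is inherited by subgraphs, it suffices to show that $G'$ is an $\MS_t$-base.

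To apply Lemma~\ref{lem:operations}, I select $t$ distinguished edges $H=\{e_1,\ldots,e_t\}\subseteq E(G')$ together with a valid labelling of $V(G')$. The residual graph $G'\setminus H$ then has $\binom{t}{2}$ edges and maximum degree at most two, so by the inductive hypothesis it is $\MS_{t-1}$-independent; having cardinality equal to the rank of $\MS_{t-1}$, it is an $\MS_{t-1}$-base. Lemma~\ref{lem:operations} then yields that $G'$ is an $\MS_t$-base, completing the step. The construction of $H$ takes it to be a vertex-disjoint union of $P_3$- and $K_2$-subgraphs of $G'$, with one distinguished edge serving as $e_t$. The labelling assigns finite labels $1,\ldots,t-1$ (each used once) to the leaves of $P_3$-components and to one endpoint of each non-$e_t$ $K_2$, places $\infty$ on the middle of every $P_3$, on the opposite endpoint of each non-$e_t$ $K_2$, and on both endpoints of $e_t$, and assigns $t-1$ to all remaining vertices. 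Conditions~(i)--(iii) hold by construction, and condition~(iv) follows from three observations: middle vertices of $P_3$-components have no external $G'$-edges by the max-degree-$\le 2$ hypothesis (and their two $H$-incident edges can be ordered consistently); leaves of $P_3$-components carry finite labels, so (iv) is vacuous at them; and any external neighbour of an endpoint of $e_t$ is labelled either finitely (in $H$) or by $t-1$ via~(iii), both $<t$.

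The main obstacle is showing that such an $H$ always exists, regardless of $G'$'s component structure. A short case analysis handles this: when $G'$ contains at least $t$ isolated edges, take $H$ to be a matching of $t$ of them (with $e_t$ any one of them); otherwise $G'$ has sufficiently many non-trivial path/cycle components from which to extract $\lfloor t/2\rfloor$ vertex-disjoint $P_3$-subgraphs, supplemented by one $K_2=e_t$ when $t$ is odd. The $P_3$-subgraphs and the $K_2$ must be distributed across the components so that any non-$e_t$ $K_2$-component has its $\infty$-endpoint of $G'$-degree one (so it is either an isolated edge or the end-edge of a path). The feasibility of this distribution follows from the edge budget and the simple structure of disjoint unions of paths and cycles; I expect this case analysis, while conceptually elementary, to be the most technically delicate part of the proof.
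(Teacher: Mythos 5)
Your proposal is correct and follows essentially the same route as the paper's proof: induction on $t$, padding $G$ with disjoint $K_2$'s to reach exactly ${t+1\choose 2}$ edges, selecting $t$ edges forming vertex-disjoint paths of length two (topped up with isolated edges when needed), assigning $\infty$ to the middle vertices and to the endpoints of $e_t$ to obtain a valid labelling, and invoking Lemma~\ref{lem:operations}. The packing/case analysis you flag as delicate is handled in the paper by extracting $\lfloor n_i/3\rfloor$ disjoint length-two paths from every nontrivial (path or cycle) component and supplementing with edges from the $K_2$-components, the edge count ${t+1\choose2}$ guaranteeing enough edges; your dichotomy amounts to the same counting.
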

        
\begin{proof} We proceed by induction on $t$. It is straightforward to check the lemma holds when $t=1$ so we may assume that $t\geq 2$. {We may also assume without loss of generality that $G$ has ${t+1}\choose2$ edges (by adding disjoint copies of $K_2$ to $G$ if necessary) and has no isolated vertices.} Then each connected component of $G$ is either a cycle or a path of length at least one. Let $H_1,H_2,\ldots,H_m$ be the connected components of $G$ and put $|V(H_i)|=n_i$ for $1\leq i\leq m$. We may assume that $n_i\geq 3$ for $1\leq i\leq k$ and $n_i=2$ for $k+1\leq i\leq m$.

We next construct a valid vertex labelling of $G$.  For all $1\leq i\leq k$, choose a subgraph $H_i'$ of $H_i$ consisting  of $m_i=\lfloor 2n_i/3 \rfloor$ vertex disjoint paths of length two. Order the edges of $H'=\bigcup_{i=1}^k H_i'$ as $e_1,e_2,\ldots e_{2a}$ in such a way that $e_{i}=u_{i}v_{i}$ and $u_{2i-1}v_{2i-1}=u_{2i}v_{2i}$ is a path in $H$ for all $1\leq i\leq a$. If $2a< t$ then we can extend $e_1,e_2,\ldots e_{2a}$ to a sequence $e_1,e_2,\ldots e_t$ by adding edges $e_i=u_iv_i$ arbitrarily from $\bigcup_{i=k+1}^m H_i'$, since $|E|={{t+1}\choose2}$. In both cases, we can construct a valid labelling with respect to $e_1,e_2,\ldots e_t$ by putting $\ell(u_i)=i,\ell(v_i)=\infty$ for $1\leq i \leq t-1$ odd, $\ell(u_i)=\infty,\ell(v_i)=i$ for $1\leq i \leq t-1$ even,
$\ell(u_t)=\infty=\ell(v_t)$ and $\ell(v)=t-1$ for all other vertices of $G$. Then $G-\{e_1,e_2,\ldots,e_t\}$
is a base of ${\cal S}_{t-1}(K_n)$ by induction. We can now apply Lemma \ref{lem:operations} to deduce that $G$ is a base of ${\cal S}_t(K_n)$.
%
%
%
\end{proof}

\subsection{Symmetric-tensor matroids of dimension at most five}
\label{sec:sym}
{\rm Jor{\'a}n \cite{J} and 
Grasegger, Guler, Jackson, and Nixon \cite{GGJN} characterised the generic $d$-dimensional rigidity matroid ${\cal R}_d(K_n)$ for $n\leq d+6$.
By duality, this corresponds to 
characterising  ${\cal S}_t(K_n)$ for $1\leq t\leq 5$.
We will show in Theorem \ref{thm:characterization} below that ${\cal S}_t$-independence is equivalent to ${\cal C}_{t}$-independence 
for these values of $t$.
Our proof is simpler than that given in \cite{GGJN} in the sense that we avoid the use of a computer to verify the statement for small graphs.
We will need the following enumeration of the graphs in $\MC_{n,t}$ and the values of $c_t$ for $1\leq t\leq 5$.}
\begin{align*}
{\cal C}_{n,2}&=\{K_n,\bar{K}_n,K_1+\bar{K}_{n-1}\}, 
\\
{\cal C}_{n,3}&=\{K_n,\bar{K}_n,K_1+\bar{K}_{n-1}, K_2+\bar{K}_{n-2}\},
\\
{\cal C}_{n,4}&=\{K_n,\bar{K}_n,K_1+\bar{K}_{n-1}, K_2+\bar{K}_{n-2}, K_3+\bar{K}_{n-3}, K_{3,3}\dot\cup \bar K_{n-6}\},
\\
{\cal C}_{n,5}&=\{K_n,\bar{K}_n,K_1+\bar{K}_{n-1}, K_2+\bar{K}_{n-2}, K_3+\bar{K}_{n-3}, K_4+\bar{K}_{n-4}\} \\
&\hspace{2em}\cup\{K_{3,4}\dot\cup \bar K_{n-7}, K_{4,4}\dot\cup \bar K_{n-8}, (K_{3,3}\dot\cup \bar{K}_{n-7})+K_1\},
\end{align*}
and 
\begin{align*}
&\mbox{ $c_{2}(K_n)=3$; $c_{2}(K_1+\bar{K}_{n-1})=2$}; \\
&\mbox{ $c_{3}(K_n)=6$, $c_{3}(K_1+\bar{K}_{n-1})=3$, $c_{3}(K_2+\bar{K}_{n-2})=5$};\\
&\mbox{ $c_{4}(K_n)=10$, $c_{4}(K_1+\bar{K}_{n-1})=4$, $c_{4}(K_2+\bar{K}_{n-2})=7$, $c_{4}(K_3+\bar{K}_{n-3})=9$,}\\
&\mbox{ $c_{4}(K_{3,3}\dot\cup \bar K_{n-6})=8$}; \\
&\mbox{ $c_{5}(K_n)=15$, 
  $c_{5}(K_1+\bar{K}_{n-1})=5$, $c_{5}(K_2+\bar{K}_{n-2})=9$, $c_{5}(K_3+\bar{K}_{n-3})=12$,}\\&\mbox{ $c_{5}(K_4+\bar{K}_{n-4})=14$, $c_{5}(K_{3,4}\dot\cup \bar K_{n-7})=11$,
  $c_{5}(K_{4,4}\dot\cup \bar K_{n-8})=13$,} \\
&\mbox{ $c_{5}((K_{3,3}\dot\cup \bar{K}_{n-7})+K_1)=13$.}
\end{align*}

{
\begin{theorem}\label{thm:characterization}
Let $G$ be a graph and $t$ be an integer with $1\leq t\leq 5$.
Then $G$  is ${\cal S}_{t}$-independent  if and only if it is ${\cal C}_{t}$-independent.
\end{theorem}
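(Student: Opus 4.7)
The forward direction, that ${\cal S}_t$-independence implies ${\cal C}_t$-independence, is immediate from Lemma~\ref{lem:necind} applied to $M = {\cal S}_t(K_n)$, which belongs to the family of abstract symmetric $t$-tensor matroids by Theorem~\ref{thm:dual}. The substantive content is the converse, and I would prove it by induction on $t$. The base case $t = 1$ is immediate: ${\cal C}_1$-independence is equivalent to $|E(G)| \leq 1$, and any single edge is clearly ${\cal S}_1$-independent since the corresponding generic scalar is nonzero.

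For the inductive step at a fixed $t \in \{2,3,4,5\}$, let $G$ be a ${\cal C}_t$-independent graph on $n$ vertices. Since adding isolated $K_2$'s on fresh vertices preserves both ${\cal C}_t$-independence and can only strengthen an $\MS_t$-independence conclusion when restricted back to $G$, I may assume $|E(G)| = \binom{t+1}{2}$. The plan is then to exhibit an ordered sequence of $t$ edges $e_1,\ldots,e_t \in E(G)$ together with a valid labelling of $V(G)$ with respect to $e_1,\ldots,e_t$ such that the residual graph $H := G - \{e_1,\ldots,e_t\}$ is ${\cal C}_{t-1}$-independent. Since $|E(H)| = \binom{t}{2} = \rank {\cal S}_{t-1}(K_n)$, the inductive hypothesis then makes $H$ a base of ${\cal S}_{t-1}(K_n)$, and Lemma~\ref{lem:operations} promotes this to ${\cal S}_t$-independence of $G$.

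The technical heart is the construction of this labelling, and here I would run a case analysis guided by the explicit enumeration of ${\cal C}_{n,t}$ given just before the theorem. If $G$ has maximum degree at most $2$, Lemma~\ref{lem:max_deg_two} already applies, so I may assume some vertex has degree at least $3$. For $t \in \{2,3\}$ only the coning sets $K_s + \bar K_{n-s}$ appear in ${\cal C}_{n,t}$, and an inverse 0-extension, 1-extension or vertex-splitting at a suitable low-degree vertex (as in the examples after Lemma~\ref{lem:operations}) gives the desired reduction. For $t \in \{4,5\}$ the additional bipartite closed circuits $K_{3,3}$, $K_{3,4}$, $K_{4,4}$ and $(K_{3,3}\dot\cup\bar K_{n-7})+K_1$ must be treated separately: when $G$ tightly meets one of these bounds, I would invoke the more elaborate labellings depicted in Figure~\ref{fig:labelling}(f,g,h), choosing $e_1,\ldots,e_t$ from within the bipartite part so that the residual graph is forced to have reduced sparsity.

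The principal obstacle I anticipate is verifying in each subcase that the chosen $H$ is genuinely ${\cal C}_{t-1}$-independent, i.e.\ that no $H' \subseteq H$ violates a bound $|E(H')| \leq c_{t-1}(F)$ for some $F \in {\cal C}_{n,t-1}$ with $H' \subseteq F$. The main tool here is Lemma~\ref{lem:mindep}: any purported minimally ${\cal C}_{t-1}$-dependent $H' \subseteq H$ would carry mandatory degree constraints which, lifted back through the labelling to constraints on $G$, contradict either the ${\cal C}_t$-independence of $G$ or the structural hypothesis of the current subcase. The finiteness and explicit form of ${\cal C}_{n,t}$ for $t \leq 5$ is what keeps this case analysis tractable and is precisely what allows the argument to bypass the computer-assisted small-graph check used in \cite{GGJN}; I expect that a substantively new idea would be needed to push the same strategy to $t \geq 6$.
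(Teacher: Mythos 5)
Your overall strategy coincides with the paper's: induction on $t$, pad $G$ to exactly $\binom{t+1}{2}$ edges, find a valid labelling with respect to $t$ edges whose removal leaves a $\MC_{t-1}$-independent residual graph, and close the loop with Lemma~\ref{lem:operations} and the inductive hypothesis. The forward direction and the reduction for a vertex of degree $t$ (the 0-extension case) go through exactly as you describe. However, the proposal leaves two genuine gaps precisely where the paper's proof does its real work.

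First, in the degree-$(t-1)$ case (the 1-extension labelling of Figure~\ref{fig:labelling}(b)) the residual graph is $G-v-e^*$ for some extra edge $e^*$ disjoint from the edges at $v$, and the difficulty is not merely checking $\MC_{t-1}$-independence of a given residual but \emph{choosing} $e^*$ so that this holds. The paper does this by first proving that $G-v$ spans $\MS_{t-1}(K_{n-1})$ and hence contains a unique $\MS_{t-1}$-circuit $C$, then using the inductive equivalence through Lemma~\ref{lem:suffind} to pin down $\cl(C)\in\MC_{n-1,t-1}$ and running a case analysis on this closure to find an edge $e^*\in C$ avoiding $N_G(v)$. Lemma~\ref{lem:mindep}, which you name as the main tool, only yields the weak degree conclusions used to justify the padding step; it cannot locate the circuit or certify the choice of $e^*$. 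Second, and more seriously, your plan presumes that some configuration admitting a valid labelling always exists, but the hardest part of the paper's proof is exactly to rule out the remaining case $t=5$ with maximum degree three, where no single-vertex reduction applies. There one must introduce reducible \emph{pairs} of vertices, show (via the $K_{3,3}$-component analysis of Claim~\ref{claim:reduction}) that some reducible pair yields a $\MC_4$-independent residual, and then derive the structural conditions (A), (B) and the connectivity/adjacency claims to show that a graph with no reducible pair has at most $9<15$ edges. None of this is automatic from the enumeration of $\MC_{n,5}$, and without it the induction does not close.
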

}
\begin{proof}
{Let $G=(V,E)$ and put $|V|=n$. The theorem holds when $n\leq t+1$ since in this case $G$ is both ${\cal S}_{t}$-independent  and ${\cal C}_{t}$-independent. Hence we may assume that $n\geq t+2$.}
Since $\MS_t(K_n)$ is an abstract symmetric $t$-tensor matroid, necessity now follows from Lemma \ref{lem:necind}.

To prove sufficiency we suppose, for a contradiction, that $G$ is a ${\cal C}_{t}$-independent graph with $n$ vertices
which is not independent in ${\cal S}_{t}(K_n)$, and that $G$ has been chosen such that  $t$ is as small as possible. 
{
We may assume without loss of generality that $|E|={{t+1}\choose2}$, since adding disjoint copies of $K_2$ to $G$ will not destroy the $\MC_t$-independence of $G$ by Lemma \ref{lem:mindep}. }
Throughout the proof, we will denote the rank function of $\MS_t(K_n)$ by $r_t$ and use the fact that $r_t(F)=c_t(F)$ for all $F\in \MC_{n,t}$ by Lemmas \ref{lem:cyclic_flats} and \ref{lem:abconing}.

Since $G$ is ${\cal C}_{t}$-independent, $
K_{1}+ \bar K_{n-1}\in {\cal C}_{n,t}$ and $r_t(K_{1}+ \bar K_{n-1})=t$, we have $|E(H)|\leq t$ for all $H\subseteq G$ with $H\subseteq K_{1}+ \bar K_{n-1}$ and hence $G$ has 
maximum degree at most $t$.  

Suppose $G$ has a vertex $v$ of degree $t$.
Let $e_1,\dots, e_t$ be the edges of $G$ incident to $v$.
Then $G$ has a valid labelling with respect to $e_1,\dots, e_t$ as in Figure \ref{fig:labelling}(a). If $G-v$ is not ${\cal C}_{t-1}$-independent then there would exist $H\subseteq G-v$ and $F\in C_{n-1,t-1}$ with 
$|E(H)|>r_{t-1}(F)$. We would then have $H'=H\cup K_{1,t}\subseteq G$, $F'=F+K_1\in C_{n,t}$, $H'\subseteq F'$  and $|E(H')|=|E(H)|+t>r_{t-1}(F)+t=r_t(F')$, contradicting the hypothesis that $G$ is ${\cal C}_{t}$-independent.
Hence, $G-v$ is ${\cal C}_{t-1}$-independent, and thus is independent in ${\cal S}_{t-1}(K_{n-1})$ by induction.
We can now apply Lemma~\ref{lem:operations} to deduce that $G$ is independent in ${\cal S}_t(K_n)$, which is a contradiction.
Hence $G$ has maximum degree at most $t-1$.
Lemma \ref{lem:max_deg_two} now implies that $t\geq 4$.


\begin{claim}\label{claim:t-1}
$G$ has no vertex of degree $t-1$.
\end{claim}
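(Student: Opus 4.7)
My plan is to apply Lemma~\ref{lem:operations} via the 1-extension-type valid labelling described in the example corresponding to Figure~\ref{fig:labelling}(b). Suppose, for a contradiction, that $v\in V(G)$ has $\deg_G(v)=t-1$ with neighbours $u_1,\ldots,u_{t-1}$. First I would produce a ``free'' edge $e_t\in E(G)$ that is vertex-disjoint from $N[v]$. Since $K_{t-1}+\bar K_{n-t+1}$ (with the $K_{t-1}$ placed on $N(v)$) lies in $\MC_{n,t}$ with $c_t$-value $(t-1)t-\binom{t-1}{2}=\binom{t+1}{2}-1$, the $\MC_t$-independence of $G$ together with $|E(G)|=\binom{t+1}{2}$ forces $G$ not to be contained in this cyclic flat. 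Hence some edge $e_t$ of $G$ has both endpoints outside $N(v)$; because $N_G(v)=\{u_1,\ldots,u_{t-1}\}$, both endpoints in fact lie in $V\sm N[v]$.

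Setting $e_i:=vu_i$ for $1\le i\le t-1$, the labelling of Figure~\ref{fig:labelling}(b) is valid with respect to $e_1,\ldots,e_t$. By Lemma~\ref{lem:operations}, to derive the desired contradiction that $G$ is a base of $\MS_t(K_n)$, it suffices to show that $G':=G-\{e_1,\ldots,e_t\}$ is a base of $\MS_{t-1}(K_n)$. Since $|E(G')|=\binom{t+1}{2}-t=\binom{t}{2}$ matches the rank of $\MS_{t-1}(K_n)$, being a base is equivalent to $\MS_{t-1}$-independence; by the minimality of $t$ in our hypothetical counterexample this further reduces to $\MC_{t-1}$-independence of $G'$.

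The core step is then to verify $\MC_{t-1}$-independence of $G'$. I would argue by contradiction: if some $H\subseteq G'$ and $F\in\MC_{n,t-1}$ witness $|E(H)|>c_{t-1}(F)$ with $H\subseteq F$, then Lemma~\ref{lem:abconing} tells us that the cone $F^+:=F+K_1$ with apex $v$ lies in $\MC_{n,t}$ and has $c_t(F^+)=c_{t-1}(F)+t$. The subgraph $H\cup\{e_1,\ldots,e_{t-1}\}$ of $G$ embeds in $F^+$ and contributes $|E(H)|+(t-1)\ge c_t(F^+)$ edges, which is one short of the strict violation that would contradict $\MC_t$-independence of $G$. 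Adjoining $e_t$ supplies the missing edge provided $e_t\in F^+$; since $e_t$ is not at $v$, this reduces to $e_t\in F$.

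The main obstacle is arranging $e_t\in F$, since a priori we only know that $e_t$ avoids $N[v]$. I would exploit the flexibility in the choice of $e_t$ from among all edges of $G$ with both endpoints in $V\sm N[v]$, together with the short explicit list of members of $\MC_{n,t-1}$ given prior to Theorem~\ref{thm:characterization} (for $t-1\in\{3,4\}$). Case by case on the type of $F$, the density bound $|E(H)|>c_{t-1}(F)$, the maximum-degree bound $\Delta(G)\le t-1$, and a careful count of the edges of $G$ lying outside $N[v]$ should yield an admissible $e_t$; should some configuration resist the direct argument, falling back on an alternative valid labelling from Figure~\ref{fig:labelling} (for instance one of the vertex-splitting labellings (c)--(e)) would finish. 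The most delicate case I anticipate is the bipartite cyclic flat $K_{3,3}\dot\cup\bar K_{n-6}$ inside $\MC_{n,4}$, which arises when $t=5$.
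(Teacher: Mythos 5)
Your reduction is the same as the paper's: use the $1$-extension labelling of Figure~\ref{fig:labelling}(b) and Lemma~\ref{lem:operations}, so that it suffices to find an edge $e_t$ of $G$ disjoint from $N_G(v)\cup\{v\}$ with $G-v-e_t$ a base of $\MS_{t-1}$, which by the minimality of $t$ is equivalent to $\MC_{t-1}$-independence. Your preliminary step is also fine: since $c_t(K_{t-1}+\bar K_{n-t+1})=\binom{t+1}{2}-1<|E(G)|$, some edge of $G$ avoids $N_G[v]$. The gap is that this is where your argument effectively stops. An arbitrary edge $e_t$ avoiding $N_G[v]$ gives no control over $G-v-e_t$: for instance, when $t=5$ the graph $G-v$ could contain a copy of $K_{3,3}$ disjoint from your chosen $e_t$, and then $G-v-e_t$ is still $\MC_4$-dependent. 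Your coning computation only shows that a violation $(H,F)$ of $\MC_{t-1}$-independence in $G-v-e_t$ lifts to a violation of $\MC_t$-independence in $G$ \emph{provided} $H\cup\{e_t\}$ embeds in $F$; it supplies no mechanism for choosing a single $e_t$ that satisfies this for every potential violation, and the quantification is delicate precisely because the set of violations changes with the choice of $e_t$. Deferring this to an unspecified ``case by case'' analysis, with a fallback to other labellings, is not a proof.

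What is missing is exactly the content of the paper's argument. The paper first proves that $G-v$ spans $\MS_{t-1}(K_{n-1})$ (its equation~(\ref{eq:claim:1})); since $|E(G-v)|=\binom{t}{2}+1$, this forces $G-v$ to contain a \emph{unique} $\MS_{t-1}$-circuit $C$, so that deleting any edge of $C$ yields a base. Establishing the spanning property is itself a substantial case analysis ruling out two distinct circuits via the explicit list of possible closures in $\MC_{n-1,t-1}$. The paper then runs a second case analysis on $F=\cl_{\MS_{t-1}}(C)\in\MC_{n-1,t-1}$ (the cases $K_{n-1}$, $K_2+\bar K_{n-3}$, $K_{3,3}\dot\cup\bar K_{n-7}$, $K_3+\bar K_{n-4}$, the last requiring a degree-sequence argument inside $C$) to exhibit an edge $e^*$ of $C$ not incident with $N_G(v)$. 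Your proposal contains neither the uniqueness-of-the-circuit step nor the analysis locating a good edge inside that circuit, and these are the heart of the claim; without them the argument does not close.
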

\begin{proof}
Suppose, for a contradiction, that $G$ has a vertex of degree $t-1$.
In view of Lemma~\ref{lem:operations}, it will suffice to show that 
$G-(N_G(v)\cup \{v\})$ has  an edge $e^*$ such that
$G-v-e^*$ is a base of ${\cal S}_{t-1}(K_{n-1})$,
since then $G$ admits a valid labelling as shown in Figure \ref{fig:labelling}(b),  and we can apply Lemma~\ref{lem:operations} to $G-v$ 
to deduce that $G$ is not a counterexample. 




To find such an edge $e^*$, we first show that 
\begin{equation}\label{eq:claim:1}
\text{$G-v$ spans ${\cal S}_{t-1}(K_{n-1})$.}
\end{equation}

Suppose for a contradiction that (\ref{eq:claim:1}) is false. Then $|E(G-v)|={t\choose2}+1\geq r_{t-1}(G-v)+2$ so $G-v$ contains two distinct $\MS_{t-1}$-circuits $C_1,C_2$. Let $F_i$ be the closure of $C_i$ in ${\cal S}_{t-1}(K_{n-1})$ for $i=1,2$. {The choice of $t$ implies that $\MS_{t-1}$-independence and $\MC_{t-1}$-independence are equivalent and hence  $F_i\in \MC_{t-1,n-1}$ by Lemma \ref{lem:suffind}.}
 We have $F_i\neq K_{n-1}$ since 
$G-v$ does not span ${\cal S}_{t-1}(K_{n-1})$ and $F_i\neq K_{1}+\bar K_{n-2}$ since $G-v$ has maximum degree at most $t$. If $F_i=K_{t-2}+\bar K_{n-t+1}$ and $G-v\not\subseteq F_i$ then we would have $r_{t-1}(G-v)>r_{t-1}(K_{t-2}+\bar K_{n-t+1})={t\choose2}-1$ again contradicting the assumption that $G-v$ does not span ${\cal S}_{t-1}(K_{n-1})$. On the other hand, if $F_i=K_{t-2}+\bar K_{n-t+1}$ and $G-v\subseteq F_i$ then we would have $G\subseteq K_{t-1}+\bar K_{n-t+1}$ and this would contradict the $\MC_{t}$-independence of $G$. The only remaining alternative is that
$t=5$ and $F_i\in \{K_2+\bar K_{n-3}, K_{3,3}\dot\cup \bar K_{n-7}\}$ for both $i=1,2$. We cannot have $F_i=K_{3,3}\dot\cup \bar K_{n-7}$ for both $i=1,2$ since this would imply that $C_1\cong K_{3,3}\cong C_2$ and contradict the fact that no graph with ${t\choose 2}+1=11$ edges can contain two distinct copies of $K_{3,3}$. Hence we may assume that $F_1=K_2+\bar K_{n-3}$. Then $r_4(F_1)=7=r_4(C_1)$ and $C_1$ is the edge-disjoint union of two copies of $K_{1,4}$ by $|C|=r_4(C_1)+1=8$. Since $G-v$ has 11 edges, $C_1\cap C_2\neq \emptyset$ and hence $G-v$ contains a third $\MS_{t-1}$-circuit. Since $G-v$ cannot contain two distinct $K_{3,3}$'s, we may assume that two of these circuits, say $C_1,C_2$ are edge-disjoint unions of two copies of $K_{1,4}$. Since $G-v$ has 11 edges and maximum degree four, this implies that $G-v=C_1\cup C_2\subseteq K_3+\bar K_{n-4}$. This in turn gives $G\subseteq K_4+\bar K_{n-4}$ and contradicts the $\MC_{5}$-independence of $G$.
Thus,  (\ref{eq:claim:1}) holds.

By (\ref{eq:claim:1}) and $|E(G-v)|={t\choose 2}+1$,
$G-v$ contains a unique ${\cal S}_{t-1}$-circuit $C$. Let $F$ be the closure of $C$ in ${\cal S}_{t-1}(K_{n-1})$. Then $F\in {\cal C}_{n-1,t-1}$ by the minimality of $t$  and Lemma \ref{lem:suffind}.
It remains to show that $C$ contains an edge $e^*$ whose endvertices are not in  $N_G(v)$. 
We will consider the various alternatives for $F$.
\\[1mm]
Case 1: $F=K_{n-1}$. If every edge of $G-v$ is incident to $N_G(v)$, then  $G\subseteq K_{t-1}+\bar K_{n-t+1}$. This  contradicts the ${\cal C}_{t}$-independence of $G$. Thus we can choose an edge  $e^*$  of $G-v-N_G(v)$. 
\\[1mm]
Case 2: $F=K_1+\bar{K}_{n-2}$. This case cannot occur since it would imply that $C\cong K_{1,t+1}$ and contradict the fact that $G$ has maximum degree $t$.
\\[1mm]
Case 3: $F=K_2+\bar{K}_{n-3}$.
Then $|C|=r_{t-1}(K_2+\bar{K}_{n-3})+1=2(t-1)$. Since $G$ has maximum degree $t-1$, $C$ is the edge disjoint union of two copies of $K_{1,t-1}$ and
the two  vertices of $C$ of degree $(t-1)$ do not belonging to $N_G(v)$. If every edge of $C$ is incident to $N_G(v)$, then $C\cong K_{2,t-1}$ and  $N_G(v)$ is the $(t-1)$-set in the bipartition of $C$. Then $G$ would contains a copy of  $K_{3,t-1}$, contradicting the ${\cal C}_{t}$-independence of $G$. Hence some edge $e^*$ of $C$ is not incident to $N_G(v)$.
\\[1mm]
Case 4: $F=K_{3,3}\dot\cup\bar{K}_{n-7}$.
Then $C\cong K_{3,3}$ and $t=5$.
If every edge of $C$ is incident to $N_G(v)$ then we would have  $K_{3,4}\subseteq G$. This  contradicts the ${\cal C}_{5}$-independence of $G$ and hence some edge $e^*$ of $C$ is not incident to $N_G(v)$.
\\[1mm]
Case 5: $F=K_3+\bar{K}_{n-4}$.  Then $t=5$ and $|C|=r_4(K_3+\bar{K}_{n-4})+1=10$. Let $x,y,z$ be the vertices in $F$ which induce a $K_3$ ordered so that $d_C(x)\geq d_C(y)\geq d_C(z)$, and let $s$ be the number of edges of $C$ induced by $\{x,y,z\}$. Then $10+s=d_C(x)+d_C(y)+d_C(z)$.
Since each vertex of $C$ has degree at most four, we have 
$(s, d_C(x), d_C(y), d_C(z))=(0,4,3,3), (0,4,4,2), (1,4,4,3)$, or $(2,4,4,4)$. 

Suppose, for a contradiction, that every edge of $C$ is incident with $N_G(v)$.
Then the facts that $G$ has maximum degree four and 
no $K_{3,4}$-subgraph, imply that no two vertices of $C$ have degree four in $C$ and hence $(s, d_C(x), d_C(y), d_C(z))=(0,4,3,3)$.
This in turn implies that $(\{x,y,z,v\},N_G(v))$ induces a subgraph of $K_{4,4}$ with 14 edges in $G$, contradicting the $\MC_{5}$-independence of $G$. Hence some edge $e^*$ of $C$ is not incident to $N_G(v)$.

\medskip

We have shown that the  desired edge $e^*$ exists in all cases. The proof of Claim \ref{claim:t-1} is now complete. 
\end{proof}

 We can now use Lemma \ref{lem:max_deg_two} and the hypothesis that $t\leq 5$ to deduce that $t=5$ and the maximum degree of $G$ is equal to three.

Figure~\ref{fig:labelling}(c)-(h),
show how we can find a valid labelling for $G$ using vertices of degree  three. Specifically, given a vertex $v_1$ of degree three in $G$, we can find a valid labelling if either:
\begin{itemize}
\item there is a non-adjacent vertex $v_2$ of degree three 
such that $|N_G(v_1)\cap N_G(v_2)|\in \{1,2\}$, see Figure~\ref{fig:labelling}(c)(g);
\item there is a non-adjacent vertex $v_2$ of degree two 
such that $|N_G(v_1)\cap N_G(v_2)|\in \{0,1\}$, see Figure~\ref{fig:labelling}(d)(f);
\item there is an adjacent  vertex $v_2$ of degree three
such that $|N_G(v_1)\cap N_G(v_2)|=0$, see Figure~\ref{fig:labelling}(e);
\item there is a non-adjacent vertex $v_2$
of degree one such that $|N_G(v_1)\cap N_G(v_2)|=0$. Figure~\ref{fig:labelling}(h).
\end{itemize}
Note that we can always find an edge $e_5$ of $G$ which is not incident with a vertex of $N_G(v_1,v_2)\cup \{v_1,v_2\}$ in Figure~\ref{fig:labelling}(f,g,h) since $|E|=15$, $G$ is $C_{5}$-independent and $K_{4}+\bar{K}_{n-4}\in C_{n,5}$.

We will say that a pair  of vertices $v_1, v_2$ of $G$ is {\em reducible} if 
it satisfies one of the above four conditions,
and refer to  $G-\{e_1,\dots, e_5\}$ as the {\em reduced graph}.
\begin{claim}\label{claim:reduction}
Suppose $G$ has a reducible pair of vertices $v_1, v_2$,
and let $G'=G-\{e_1,\dots, e_5\}$ be the corresponding reduced graph.
If $G'$ is not ${\cal C}_{4}$-independent, then 
$G'$ has a connected component isomorphic to  $K_{3,3}$.
\end{claim}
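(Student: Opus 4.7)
The plan is a direct case analysis over the elements of $\MC_{n,4}$. If $G'$ is not $\MC_4$-independent, then there exist $H\subseteq G'$ and $F\in\MC_{n,4}$ with $H\subseteq F$ and $|E(H)|>c_4(F)$. The two key constraints I would exploit are that $|E(G')|=|E(G)|-5=10$, and that $G'$ has maximum degree at most $3$, inherited from $G$ (whose maximum degree has been shown earlier in the proof to be exactly $3$). Recall the explicit list
\[
\MC_{n,4}=\{K_n,\bar K_n,K_1+\bar K_{n-1},K_2+\bar K_{n-2},K_3+\bar K_{n-3},K_{3,3}\dot\cup\bar K_{n-6}\}
\]
with $c_4$-values $10,0,4,7,9,8$ respectively.

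I would then rule out all $F$ except $K_{3,3}\dot\cup\bar K_{n-6}$ by simple counting. The cases $F=\bar K_n$ and $F=K_n$ are immediate from the edge count $|E(G')|=10$. The case $F=K_1+\bar K_{n-1}$ forces $H$ to be a star with at least $5$ leaves, contradicting the max-degree bound of $3$. For $F=K_2+\bar K_{n-2}$ and $F=K_3+\bar K_{n-3}$, the containment $H\subseteq F$ gives $H$ a vertex cover of size $2$ or $3$ respectively, and summing the (at most $3$) degrees of the cover vertices yields $|E(H)|\leq 6$ or $|E(H)|\leq 9$, while the required violation demands $|E(H)|\geq 8$ or $|E(H)|\geq 10$. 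The only surviving case is $F=K_{3,3}\dot\cup\bar K_{n-6}$, which forces $H\subseteq K_{3,3}$ with $|E(H)|>8$; since $K_{3,3}$ has exactly $9$ edges, $H$ must equal $K_{3,3}$, so $G'$ contains a copy of $K_{3,3}$.

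To upgrade ``contains'' to ``is a connected component'', I would observe that $K_{3,3}$ is $3$-regular while $G'$ has maximum degree at most $3$; hence each of the six vertices of this $K_{3,3}$ has already used up all its available incidences inside the copy, and can have no further $G'$-edges. Consequently, the $K_{3,3}$ is a connected component of $G'$, completing the proof.

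There is essentially no obstacle here; the argument reduces to a short finite check against the enumerated list $\MC_{n,4}$, using only the max-degree hypothesis and the edge count of $G'$ already established in the theorem.
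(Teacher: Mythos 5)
Your proposal is correct and follows essentially the same route as the paper: rule out every $F\in\MC_{n,4}$ other than $K_{3,3}\dot\cup\bar K_{n-6}$ using the maximum-degree-three bound (the paper phrases this as ``$H$ would have a vertex of degree at least four'', which is exactly your vertex-cover degree count), and then upgrade the resulting copy of $K_{3,3}$ to a connected component via $3$-regularity. No gaps.
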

\begin{proof}
Suppose $G'$ is not ${\cal C}_{4}$-independent.
Then $G'$ has a subgraph $H$
with $|V(H)=n\geq 5$ and $|E(H)|=r_{4}(F)+1$ for some $F\in {\cal C}_{4,n}$ with $H\subseteq F$.
Observe that, if $F\in \{K_a+\bar{K}_{n-a}: a=1,2,3\}$,
then $H$ would have a vertex of degree at least four, since $n\geq 5$.
This would contradict the fact that $G$ has maximum degree three, so
$F$ must be  a copy of $K_{3,3}\dot\cup \bar K_{n-6}$, and $E(H)=E(K_{3,3})$ follows by $r_4(K_{3,3})=8$.
Hence $G'$ contains a subgraph isomorphic to $K_{3,3}$.
Since the maximum degree of $G'$ is three, 
this subgraph isomorphic to $K_{3,3}$ must be a connected component of $G'$.
\end{proof}

Suppose $G$ has a reducible pair of vertices $v_1, v_2$.
If the corresponding reduced graph $G':=G-\{e_1,\dots e_5\}$ is ${\cal C}_{4}$-independent,
then $G'$ is a base of  ${\cal S}_4(K_n)$ 
and $G$ is a base of  ${\cal S}_5(K_n)$ by Lemma~\ref{lem:operations}.
Hence, we may assume that $G'$ is not ${\cal C}_{4}$-independent.
Then, by Claim~\ref{claim:reduction}, $G'$ has a connected component $H$ isomorphic to $K_{3,3}$. Since $G$ has maximum degree three, $H$ remains as a connected component in $G$. 
Since $|E(G)|= 15$, $G$ consists of $e_1,\dots, e_5$, $H$, and one more edge $f$.
Then any two adjacent vertices of $H$
form a reducible pair in $G$ (of Type (e) in Figure~\ref{fig:labelling}).
Since  the corresponding reduced graph contains no copy of $K_{3,3}$,
Claim~\ref{claim:reduction} and Lemma~\ref{lem:operations} imply that $G$ is a base of ${\cal S}_5(K_n)$, contradicting the choice of $G$.
Hence,  
\begin{equation}\label{clm:redcue}
\mbox{$G$ has no reducible pairs of vertices.}    
\end{equation}
 This implies that one of the following two alternatives holds for all
pairs of distinct vertices $v_1, v_2\in V$ in which $v_1$ has degree three.
\begin{description}
\item[(A)] If $v_1, v_2$ both have degree three, then either
$v_1$ and $v_2$ are adjacent and $|N_G(v_1)\cap N_G(v_2)|\in \{1,2\}$, or
$v_1$ and $v_2$ are not adjacent and
$|N_G(v_1)\cap N_G(v_2)|\in \{0,3\}$.
\item[(B)] If $v_1$ has degree three and $v_2$ has degree less than three, then  either $v_1$ and $v_2$ are adjacent or $N_G(v_2)\subset  N_G(v_1)$.
\end{description}

We first show that: 
\begin{equation}\label{eq:claim1}
\text{there is no pair $v_1, v_2\in V$, both of degree three, satisfying $N_G(v_1)\cap N_G(v_2)=\emptyset$.}
\end{equation}
To see this, suppose there is such a pair $v_1, v_2$.
Then $v_1$ and $v_2$ are not adjacent by (A).
We claim that $G$ is 3-regular.
Indeed, if $w$ is a vertex of degree less than three,
then by (B), $w\in N(v_i)$ or $N(w)\subset N(v_i)$ holds for each $i=1,2$.
By $N(v_1)\cap N(v_2)=\emptyset$, we may assume
$w\in N(v_1)$ and $N(w)\subset N(v_2)$.
This however implies $v_1\in N(w)\subset N(v_2)$, contradicting that $v_1$ and $v_2$ are not adjacent.
Thus, $G$ is 3-regular. It is straightforward to check that the only cubic graph with the property that all edges belong to a triangle is the disjoint union of copies of $K_4$. This and the hypothesis that $|E|=15$ imply that $G$ has two adjacent vertices with no common neighbours.
This contradicts the fact that $G$ has no reducible pair,
and  (\ref{eq:claim1}) follows.

We next show that: 
\begin{equation}\label{eq:claim2}
\text{$G$ is connected.}
\end{equation}
Suppose, for a contradiction, that $G$ is not connected. Then we can choose two vertices $v_1, v_2$ belonging to distinct components of $G$.
By Lemma~\ref{lem:max_deg_two}, we may assume that $v_1$ has degree three.
By (A) and (B), $v_2$ has degree three and $N(v_1)\cap N(v_2)=\emptyset$.
This contradicts (\ref{eq:claim1}).

We next show that: 
\begin{equation}\label{eq:claim3}
\text{all pairs of degree three vertices of $G$ are  adjacent.}
\end{equation}
Suppose that two vertices $v_1, v_2$ of degree three are not adjacent.
By (A) and (\ref{eq:claim1}), we have
$N(v_1)=N(v_2)=\{x,y,z\}$, say.
Since $G$ is a connected graph with 15 edges and maximum degree three, at least one of the neighbours of $v_1,v_2$, say $x$, has a neighbour in $V\sm\{v_1,v_2,x,y,z\}$. 
Then $v_1,x$ is a reducible pair (as shown in Figure~\ref{fig:labelling}(e)), contradicting (\ref{clm:redcue}),

Finally, one can easily check that 
a connected graph of maximum degree three which satisfies (\ref{eq:claim3}) and (B) has at most 9 edges. This contradiction completes the proof of the theorem.
\end{proof}

{\rm
It is possible that Theorem \ref{thm:characterization} can be extended to cover the case when $t=6$. However it becomes false when $t=7$. This follows since $K_{6,6}$ is a circuit in $\MR_4(K_{12})$ and the graph obtained by adding two edges $e,f$ to opposite sides of $K_{6,6}$ is a hyperplane. By duality, this implies that the graph obtained by deleting $e,f$ from  different components of $K_6\dot\cup K_6$ is a circuit in $\MS_7(K_{12})$, so has rank 27. On the other hand, the only hyperplane of $\MS_7(K_{12})$ contained in $\MC_{12,7}$ is $K_6+\bar K_6$. Since $(K_6-e)\dot\cup (K_6-f)\not\subseteq K_6+\bar K_6$, we cannot use $\MC_{7}$-independence to deduce that $(K_6-e)\dot\cup (K_6-f)$ is dependent in $\MS_7(K_{12})$.

\appendix
\addcontentsline{toc}{section}{Appendix}

\section{Appendix}
\label{sec:K1t}
We show that the matroid $M_1$ defined in the proof of Theorem \ref{thm:K1t} is the unique maximal $K_{1,t+1}$-matroid on $K_{n}$ when $t= 2,3$ and characterise $M_1$ for all $t$ when $n\geq 2\lfloor (2t+1)/3\rfloor$.

\subsection{Matroid Preliminaries}
We first give some concepts and results on maximal matroids from \cite{JT}.
Let $\MX$ be a family of subsets of a finite set $E$.
A matroid on $E$ is said to be an $\MX$-matroid if every $X\in \MX$ is a circuit in $M$. 
A {\em proper $\MX$-sequence} in $E$ is a sequence $\MS=(X_1, X_2, \dots, X_k)$ of sets in $\MX$ such that $X_i\not\subset \bigcup_{j=1}^{i-1} X_j$ for all $i=2,\dots, k$. For $F\subset E$, let ${\rm val}(F,\MS)=| F\cup (\bigcup_{i=1}^k X_i)|-k$ and define $\val_\MX:2^E\to \ZZ$ by $\val_\MX(F)=\min\{\val(F,\MS)\}$ where the minimum is taken over all proper $\MX$-sequences $\MS$ in $E$.

\begin{lemma}[{\cite[Lemma 3.3]{CJT2}}]\label{lem:upper1}
Suppose $M=(E,r)$ is an $\MX$-matroid and $F\subseteq E$. Then $r_{M}(F)\leq {\rm val}(F,\MS)$ for any proper $\MX$-sequence $\MS$.  Furthermore, if equality holds, then $r(F-e)=r(F)-1$ for all 
$e\in F\sm  (\bigcup_{X\in \MS} X)$ and $r(F+e)=r(F)$ for all 
$e\in \bigcup_{X\in \MS} X$. 
\end{lemma}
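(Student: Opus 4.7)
The plan is to first prove a rank bound on the union of the circuits alone, then combine it with $F$. Writing $A_i = \bigcup_{j=1}^{i} X_j$, I would show by induction on $i$ that $r(A_i) \leq |A_i| - i$. The inductive step uses submodularity,
\[
r(A_{i-1}\cup X_i) + r(A_{i-1}\cap X_i) \le r(A_{i-1}) + r(X_i),
\]
together with the two ingredients $r(X_i) = |X_i| - 1$ (since $X_i$ is a circuit) and $r(A_{i-1}\cap X_i) = |A_{i-1}\cap X_i|$. The latter is where the proper-sequence hypothesis $X_i \not\subseteq A_{i-1}$ is used: it forces $A_{i-1}\cap X_i$ to be a proper subset of the circuit $X_i$ and hence independent. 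Combining these with the inductive hypothesis $r(A_{i-1})\le |A_{i-1}|-(i-1)$ yields $r(A_i)\le |A_i|-i$.

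Given this, the main inequality follows from the chain
\[
r(F) \;\le\; r(F\cup A_k) \;\le\; r(A_k)+|F\setminus A_k| \;\le\; (|A_k|-k)+|F\setminus A_k| \;=\; |F\cup A_k|-k \;=\; \val(F,\MS),
\]
where the middle inequality just adds the elements of $F\setminus A_k$ to $A_k$ one at a time.

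For the equality clause, assume $r(F)=\val(F,\MS)$, so every step in the chain is tight. From $r(F)=r(F\cup A_k)$, every element of $A_k\setminus F$ lies in $\clo(F)$, giving $r(F+e)=r(F)$ for all $e\in \bigcup_{X\in\MS}X$ (the case $e\in F$ being trivial). From $r(F\cup A_k)=r(A_k)+|F\setminus A_k|$, the elements of $F\setminus A_k$ extend any basis of $A_k$ to a basis of $F\cup A_k$, so each $e\in F\setminus A_k$ is a coloop of $M|(F\cup A_k)$; hence $r((F\cup A_k)-e)=r(F\cup A_k)-1=r(F)-1$, and monotonicity then forces $r(F-e)=r(F)-1$. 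The only place requiring care is ensuring that the proper-sequence hypothesis is invoked at exactly the right step of the induction in Step 1; once that is pinned down the rest is a formal exercise in submodularity and the definition of a coloop.
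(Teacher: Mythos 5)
Your argument is correct and complete: the induction $r\bigl(\bigcup_{j\le i}X_j\bigr)\le\bigl|\bigcup_{j\le i}X_j\bigr|-i$ via submodularity (using properness to make $A_{i-1}\cap X_i$ a proper, hence independent, subset of the circuit $X_i$), followed by tightness analysis of each inequality in the chain, is exactly the standard proof of this lemma. The paper itself only cites the result from \cite[Lemma 3.3]{CJT2} without reproducing a proof, and your argument matches the one given there in all essentials.
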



We can often use a special kind of proper $\MX$-sequence to verify the equality
 $r(F)={\rm val}(F,\MS)$. Given $F\subseteq E$ and $F_0\subseteq F$,
a {\em weakly $\MX$-saturated sequence for $F$ from $F_0$} is 
a proper $\MX$-sequence $\MS=(X_1, X_2,  \dots, X_m)$ such that $|X_i\setminus (F_0\cup \bigcup_{j=1}^{i-1} X_i)|=1$ for all $i$ with $1\leq i\leq m$ and 
$F=F_0\cup \bigcup_{i=1}^m X_i$. Given such a sequence $\MS$, it is easy to check that $\val(F,\MS)=|F_0|$.

The {\em truncation} of a matroid $M_1=(E,\MI_1)$ of rank $k$ is the matroid $M_0=(E,\MI_0)$ of rank $k-1$, where $\MI_0=\{I\in \MI_1\,:\,|I|\leq k-1\}$.
Crapo~\cite{C} defined {\em matroid erection} as the `inverse operation' to truncation. So $M_1$ is an {\em erection} of $M_0$ if $M_0$ is the truncation of $M_1$. (For technical reasons we also consider $M_0$ to be a {\em trivial erection} of itself.)
Note that, although every matroid  has a unique truncation,  matroids may have several, or no,  non-trivial erections.

Crapo~\cite{C} showed that the poset of all erections of a matroid $M$ is actually a {lattice}. It is clear that the trivial erection of $M$ is the unique minimal element in this lattice. Since this is a finite lattice, there also exists a unique 
maximal element $M'$ which Crapo called the  {\em free erection of $M$}.  Several authors gave algorithms for constructing the free erection of $M$ based on calculations on the  dual matroid $M^*$. A version of these algorithms which works directly with $M$ is given below. It uses the following concepts.

A set $X\subseteq E$ is a {\em cyclic set of $M$} if, for every $e\in X$, there  is a ciruit $C$ of $M$ with $e\in C\subseteq X$. A family $\MX$ of subsets of $E$ is said to be a {\em cyclic family} of $M$ if every $X\in \MX$ is cyclic in $M$. 
A cyclic family $\MX$ is {\em down closed in $M$} if $X'\in \MX$ whenever $X'$ is cyclic in $M$ and $X'\subseteq X$ for some $X\in \MX$. 
The {\em down closure} of  an arbitrary cyclic family $\MX$ is the down closed cyclic family $\MX^{\downarrow}$ given by
\[
\MX^{\downarrow}=
\bigcap\{ {\cal Y}: {\cal Y} \text{ is a down closed cyclic family of $M$ and } {\cal X}\subseteq {\cal Y}\}.
\]
A cyclic family $\MX$ is {\em modular} if it is down closed and   
\begin{itemize}
\item $X_1\cup X_2\in \MX$ whenever $X_1,X_2\in \MX$ and $r(X_1)+r(X_2)=r(X_1\cup X_2)+r(X_1\cap X_2)$. 
\end{itemize}
The {\em modular closure} of a cyclic family $\MX$ is the modular cyclic family $\bar \MX$ given by
\[
\bar{{\cal X}}=
\bigcap\{ {\cal Y}: {\cal Y} \text{ is a modular cyclic family of $M$ and } {\cal X}\subseteq {\cal Y}\}.
\]
It can be calculated recursively from $\MX$ using the following algorithm from \cite{CJT2}.

\paragraph{Algorithm 1}
\begin{itemize}
\item Initialize ${\cal X}_0:={\cal X}^{\downarrow}$.
\item Repeatedly construct ${\cal X}_i$ from ${\cal X}_{i-1}$ by
\[
{\cal X}_i:={\cal X}_{i-1}\cup \{X\cup Y \mid X, Y\in {\cal X}_{i-1}:  \text{ $X$ and $Y$ form a modular pair in $M$} \}^{\downarrow}.
\]
\item If ${\cal X}_i={\cal X}_{i-1}$, then ${\cal X}_i$ is $\bar{{\cal X}}$.
\end{itemize}

Let $\MX_0$ be the family of all non-spanning cyclic sets of $M$. Then each $X\in \MX_0$ is cyclic in every erection of $M$. More generally, it can be seen that  each $X\in \bar\MX_0$ is cyclic in every erection of $M$ and that $\bar \MX_0$ is the family of non-spanning cyclic sets in the free erection of $M$. We also have the following explicit rank formula for the free erection of $M$. Given $X\subseteq E$ we use $\cyc(X)$ to denote the largest subset of $X$ which is cyclic in $M$. (This is well defined since the union of two cyclic sets is cyclic.)
 
\begin{lemma} \cite[Corollary 2.4] {CJT2} \label{lem:freerank}
Let $M=(E,r)$ be a matroid, $\MX_0$ be the family of all non-spanning cyclic sets of $M$ and $M'=(E,r')$ be the free erection of $M$.
Then, for all $X\subseteq E$, we have 
\begin{equation}
 \label{eq:erection}
r'(X)=\begin{cases}
r(X) & \mbox{ if ${\rm cyc}(X)\in \bar\MX_0$} \\
r(X)+1 & \mbox{ if ${\rm cyc}(X)\not \in \bar\MX_0$}
\end{cases}
\end{equation}
\end{lemma}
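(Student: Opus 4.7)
The plan is to define a candidate matroid $M' = (E, r')$ via the formula in \eqref{eq:erection}, verify that $r'$ satisfies the matroid rank axioms, confirm that its truncation is $M$ (so that $M'$ is an erection of $M$), and finally show that $M'$ dominates every erection of $M$ in the weak order, proving that it is the free erection.

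First I would verify that $r'$ is a matroid rank function. Writing $\epsilon(X) = r'(X) - r(X) \in \{0,1\}$, with $\epsilon(X) = 0$ precisely when $\cyc(X) \in \bar\MX_0$, normalization is immediate from $\cyc(\emptyset) = \emptyset \in \bar\MX_0$. Unit-increase follows from a clean case split: if $r(X+e) = r(X) + 1$, then $e \notin \cl_M(X)$ forces $\cyc(X+e) = \cyc(X)$, so $\epsilon(X+e) = \epsilon(X)$ and $r'(X+e) - r'(X) = 1$; if $r(X+e) = r(X)$, then $\cyc(X) \subseteq \cyc(X+e)$ together with the down-closure of $\bar\MX_0$ gives $\epsilon(X+e) \geq \epsilon(X)$, so $r'(X+e) - r'(X) \in \{0,1\}$. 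The main obstacle is submodularity, $r'(X) + r'(Y) \geq r'(X \cup Y) + r'(X \cap Y)$. For non-modular pairs $X, Y$ in $M$, the strict inequality in the submodularity of $r$ absorbs any change in $\epsilon$, and the inequality is immediate. The critical case is when $X, Y$ form a modular pair in $M$, where one must show $\epsilon(X) + \epsilon(Y) \geq \epsilon(X \cap Y) + \epsilon(X \cup Y)$. Down-closure combined with $\cyc(X \cap Y) \subseteq \cyc(X)$ handles the intersection. For the union one exploits the modular-pair step in Algorithm~1: modularity of $X, Y$ in $M$, combined with $r(\cyc(X)) = r(X)$ and $r(\cyc(Y)) = r(Y)$, verifies that $\cyc(X), \cyc(Y)$ are in a modular configuration within $\bar\MX_0$, so the algorithm places $\cyc(X) \cup \cyc(Y)$ in $\bar\MX_0$; down-closure then gives $\cyc(X \cup Y) \in \bar\MX_0$ (noting that in the subcase when $\cyc(X) \cup \cyc(Y)$ is already non-spanning in $M$ it lies directly in $\MX_0$).

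Second, to show that $M'$ is an erection of $M$, observe that $r'(X) \leq r(X) + 1 \leq r(M) + 1$, so $\rank M' \leq r(M) + 1$. The truncation of $M'$ therefore has rank function $\min(r'(X), r(M)) = r(X)$, since $r'(X) = r(X) + 1$ can occur only when $r(X) = r(M)$. Hence the truncation of $M'$ equals $M$.

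Finally for maximality, let $M_1$ be any erection of $M$ with rank function $r_1$, and let $\MY$ be the family of non-spanning cyclic sets of $M_1$. One checks: (i) every $Y \in \MY$ is $M$-cyclic, since each $M_1$-circuit $C$ contained in $Y$ has $|C| \leq r(M) + 1$ (a spanning $M_1$-circuit would contradict $Y$ non-spanning), so $r(C) = |C|-1$ by the truncation relation and $C$ is an $M$-circuit; (ii) every non-spanning $M$-cyclic set is in $\MY$, since each $M$-circuit of size at most $r(M)$ is an $M_1$-circuit (its proper subsets are $M$-independent of size at most $r(M)$, hence $M_1$-independent), giving $\MX_0 \subseteq \MY$; and (iii) $\MY$ is down-closed and modular in $M$, inherited from the matroid structure of $M_1$ (for modularity, if $Z_1, Z_2 \in \MY$ form a modular pair in $M$, then a short dimension count forces $r_1(Z_1 \cup Z_2) \leq r(M)$, so $Z_1 \cup Z_2$ is non-spanning in $M_1$ and clearly cyclic there). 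By the minimality of the modular closure, $\bar\MX_0 \subseteq \MY$. Hence whenever $\cyc(X) \in \bar\MX_0$, $\cyc(X) \in \MY$ is non-spanning in $M_1$, which combined with $r(\cyc(X)) = r_1(\cyc(X))$ and the $M_1$-independence of $X \setminus \cyc(X)$ yields $r_1(X) \leq r_1(\cyc(X)) + |X \setminus \cyc(X)| = r(X) = r'(X)$. In the remaining case $r_1(X) \leq r(X) + 1 = r'(X)$ is automatic. Thus $r_1 \leq r'$ pointwise, which is the required maximality. The principal obstacle throughout is the submodularity verification, where the modular-pair closure in the construction of $\bar\MX_0$ must be leveraged precisely against the behaviour of $\cyc$ on intersections and unions.
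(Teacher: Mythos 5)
The paper does not prove this lemma --- it is quoted from \cite[Corollary 2.4]{CJT2} --- so there is no in-paper argument to compare against, and I assess your proof on its own terms. Your architecture (define $r'$ by the formula, verify the rank axioms, check the truncation is $M$, then dominate every erection by showing its family of non-spanning cyclic sets is a down-closed modular cyclic family of $M$ containing $\MX_0$, hence containing $\bar\MX_0$) is sound, and the normalization, unit-increase, truncation and maximality parts are essentially correct.

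The gap is in the step you yourself flag as critical: submodularity for a modular pair $X,Y$ with $\cyc(X),\cyc(Y)\in\bar\MX_0$. First, the identity $r(\cyc(X))=r(X)$ you invoke is false in general: $X\sm\cyc(X)$ is exactly the set of coloops of $M|X$, so $r(X)=r(\cyc(X))+|X\sm\cyc(X)|$, with equality only when $X$ is itself cyclic (e.g.\ a single non-loop $X$ has $\cyc(X)=\emptyset$). So modularity of the pair $(\cyc(X),\cyc(Y))$ does not follow as you claim. Second, and more seriously, your concluding step ``down-closure then gives $\cyc(X\cup Y)\in\bar\MX_0$'' runs in the wrong direction: the automatic containment is $\cyc(X)\cup\cyc(Y)\subseteq\cyc(X\cup Y)$, so membership of the smaller set in $\bar\MX_0$ says nothing about the larger set via down-closure. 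What you actually need, and must prove, is that these two sets coincide for a modular pair. This is true but requires an argument: if $z\in X\sm Y$ is a coloop of $M|X$, submodularity applied to $X-z$ and $Y$ gives $r(X)-1+r(Y)\geq r((X\cup Y)-z)+r(X\cap Y)$, which combined with modularity forces $r((X\cup Y)-z)<r(X\cup Y)$, so $z$ is a coloop of $M|(X\cup Y)$; a similar count disposes of $z\in X\cap Y$ that is a coloop of both restrictions. Hence $\cyc(X\cup Y)=\cyc(X)\cup\cyc(Y)$, and a bookkeeping computation with $r(W)=r(\cyc(W))+|W\sm\cyc(W)|$ then shows that $(\cyc(X),\cyc(Y))$ is indeed a modular pair, after which the modular-closure property applies as you intend. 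With these two repairs the proof goes through, but as written the key step is unjustified.
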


\begin{lemma}  \label{lem:freeseq}
Let $M=(E,r)$ be a matroid, $\MX_0$ be the family of all non-spanning cyclic sets of $M$ and $\MC_0$ be the family of all non-spanning circuits of $M$. Suppose $X$ is a cyclic set in $M$ and $\val(\MS,X)=r(X)$ for some proper $\MC_0$-sequence $\MS$ in $E$. 
Then $X\in \bar\MX_0$.
\end{lemma}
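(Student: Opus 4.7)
My plan is to compute the rank of $X$ in the free erection $M' = (E, r')$ of $M$, show that $r'(X) = r(X)$, and then invoke Lemma~\ref{lem:freerank}.

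The first step is to verify that $M'$ is itself an $\MC_0$-matroid, i.e.\ that every non-spanning circuit $C$ of $M$ remains a circuit of $M'$. Since $M$ is the truncation of $M'$ to rank $r(M)$, a set of size at most $r(M)$ is $M$-dependent if and only if it is $M'$-dependent, and the same correspondence holds for minimal dependence. Every $C \in \MC_0$ satisfies $|C| = r(C)+1 \leq r(M)$, so this gives what is needed. Consequently, $\MS$ is still a proper $\MC_0$-sequence when read in $M'$, and Lemma~\ref{lem:upper1} applied in $M'$ yields $r'(X) \leq \val(X, \MS)$.

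Combining this inequality with the hypothesis $\val(X, \MS) = r(X)$ gives $r'(X) \leq r(X)$. On the other hand, since $M$ is a truncation of $M'$ we always have $r(X) \leq r'(X)$ for every $X \subseteq E$. Therefore $r'(X) = r(X)$.

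Finally, Lemma~\ref{lem:freerank} says that $r'(X) = r(X)$ forces $\cyc(X) \in \bar{\MX}_0$. Since $X$ is cyclic in $M$ by hypothesis, $\cyc(X) = X$, so $X \in \bar{\MX}_0$, as required. The only delicate point in this plan is the preliminary observation that non-spanning circuits of $M$ automatically remain circuits of the free erection; this is what allows the $\val$-bound from Lemma~\ref{lem:upper1} to be transferred to $M'$, after which the conclusion falls out of a direct rank comparison and a single application of the rank formula for the free erection.
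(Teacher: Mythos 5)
Your proof is correct and follows essentially the same route as the paper's: bound $r'(X)$ above by $\val(X,\MS)=r(X)$ using Lemma~\ref{lem:upper1} in the free erection $M'$, note $r'\geq r$ to get $r'(X)=r(X)$, and conclude via Lemma~\ref{lem:freerank} and $\cyc(X)=X$. The only difference is that you explicitly justify the step the paper leaves implicit, namely that every non-spanning circuit of $M$ remains a circuit of $M'$, which is a worthwhile addition.
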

\begin{proof} Let $M'=(E,r')$ be the free erection of $M$. Since $X$ has rank at most $\val(\MS,X)$ in every $\MC_0$-matroid and $r'\geq r$, we have $r'(X)=r(X)$. Lemma \ref{lem:freerank} and the fact that $\cyc(X)=X$ now give $X\in \bar\MX_0$.
\end{proof}

\medskip

A {\em partial elevation} of  $M_0$ is any matroid $M$ which can be constructed from $M_0$ by a  sequence of erections. A {\em (full) elevation} of $M_0$ is a partial elevation $M$ which has no non-trivial erection.
The {\em free elevation} of $M_0$ is the matroid we get from $M_0$ by recursively constructing a maximum sequence of 
non-trivial 
free erections. 
The set  of all partial elevations of $M_0$ forms a poset $P(M_0)$ under the weak order and $M_0$ is its unique minimal element.
Every maximal element of $P(M_0)$ will have no non-trivial erection so will be a full elevation of $M_0$. 
Given Crapo's result that the poset of all erections of $M_0$ is a lattice, it is tempting to conjecture that $P(M_0)$ will also be a lattice and that the free elevation of $M_0$ will be its unique maximal element. But this is false:  counterexamples  are given in \cite{B86,JT}. 
The following weaker result is given in  \cite[Lemma 3.1]{CJT2}.

\begin{lemma}\label{lem:free_elevation}
Let $M_0$ be a matroid on $E$.
and ${\cal C}_0$ be the family of all non-spanning circuits in $M_0$.
Then the free elevation of $M_0$ is a  maximal ${\cal C}_0$-matroid on $E$.
\end{lemma}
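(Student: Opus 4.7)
The proof splits into verifying (i) that $M$ is a $\MC_0$-matroid, and (ii) that $M$ is maximal among $\MC_0$-matroids in the weak order.

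For (i), denote $M_0\preceq M_1\preceq\cdots\preceq M_k=M$ the chain of free erections used to build $M$. Since $M_s=T(M_{s+1})$ at each step, the non-spanning circuits of $M_{s+1}$ are exactly the circuits of $M_s$ of size at most $\rank M_{s+1}$. Each $C\in\MC_0$ satisfies $|C|\le\rank M_0\le\rank M_s$, so a routine induction shows that $C$ remains a circuit of every $M_s$ and therefore of $M$.

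For (ii), suppose $N$ is a $\MC_0$-matroid with $M\preceq N$. I would first reduce to the equal-rank case. If $\rank N>\rank M$, the truncation $T(N)$ is again a $\MC_0$-matroid (every $C\in\MC_0$ has size at most $\rank M\le\rank T(N)$ so remains a circuit after truncation) and still satisfies $M\preceq T(N)$ (independence below the new rank is preserved). Iterating produces a $\MC_0$-matroid $N':=T^{\rank N-\rank M}(N)$ of rank $\rank M$ with $M\preceq N'$. Applying the equal-rank argument below yields $N'=M$, which exhibits $T^{\rank N-\rank M-1}(N)$ as a non-trivial erection of $M$, contradicting the defining property of the free elevation that $M$ admits no non-trivial erection. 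Hence $\rank N=\rank M$.

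In the equal-rank case, iterated truncation identifies $T^k(N)$ as a $\MC_0$-matroid of rank $\rank M_0$ lying above $M_0$ in the weak order. It is elementary to check the converse inequality: any $\MC_0$-matroid $L$ with $\rank L\le\rank M_0$ satisfies $L\preceq M_0$, because every circuit of $M_0$ is either in $\MC_0$ (hence a circuit of $L$, so dependent in $L$) or a spanning circuit of $M_0$ of size $\rank M_0+1>\rank L$ (hence dependent in $L$ for size reasons). Thus $T^k(N)=M_0$, so $N$ is a full elevation of $M_0$ of the same rank as $M$. The heart of the argument is then a monotonicity property of the free erection: if $P\preceq Q$ have the same rank, $P'$ is any erection of $P$, and $Q'$ is the free erection of $Q$, then $P'\preceq Q'$. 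Applying this monotonicity step by step to the chains $M_0=T^k(N)\preceq T^{k-1}(N)\preceq\cdots\preceq N$ and $M_0\preceq M_1\preceq\cdots\preceq M$ yields $N\preceq M$, and combined with $M\preceq N$ gives $N=M$.

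The main obstacle is the monotonicity of free erection just invoked. Although plausible and verifiable in small examples, a rigorous proof has to match the rank formula of Lemma~\ref{lem:freerank} (which writes $r_{M_{s+1}}(X)=r_{M_s}(X)+[\cyc_{M_s}(X)\notin\bar\MX_0(M_s)]$) with the modular-closure construction behind $\bar\MX_0$ in Algorithm~1, and in particular use Lemma~\ref{lem:freeseq} to transfer rank-attaining proper $\MC_0$-sequences across the comparison $P\preceq Q$. Reconciling the recursive modular-closure construction of the free erection with the weak order on the base matroids is the delicate technical core of the argument.
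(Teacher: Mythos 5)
First, note that the paper does not actually prove this statement: it is quoted from \cite[Lemma 3.1]{CJT2}, so there is no in-paper argument to compare against. Judged on its own terms, your proposal has a genuine gap exactly where you flag it. Part (i) and the rank reductions in part (ii) are sound (modulo a small slip: the circuits of $M_s$ that survive as non-spanning circuits of $M_{s+1}$ are those of size at most $\rank M_s$, not $\rank M_{s+1}$ --- as written your sentence would make every spanning circuit of $M_s$ a circuit of $M_{s+1}$, which is false; this does not affect your application since elements of $\MC_0$ have size at most $\rank M_0$). But the entire maximality argument then rests on the asserted ``monotonicity of free erection'' --- that $P\preceq Q$ of equal rank forces every erection of $P$ to lie below the free erection of $Q$ --- and you do not prove it. This is a statement comparing the modular closures $\bar\MX_0$ computed in two \emph{different} matroids, and nothing in Lemma \ref{lem:freerank}, Lemma \ref{lem:freeseq} or Algorithm 1 hands it to you. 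As written, the proof is incomplete at its self-identified core.

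The gap is avoidable, and the repair is simpler than the lemma you invoke. In the equal-rank case you have already shown $T^k(N)=M_0$, and since truncation preserves the weak order between matroids of equal rank, $M_i=T^{k-i}(M)\preceq T^{k-i}(N)=:N_i$ for every $i$. Now induct on $i$: if $N_i=M_i$, then $N_{i+1}$ is an erection of $M_i$ (its truncation is $N_i=M_i$), so $N_{i+1}\preceq M_{i+1}$ because $M_{i+1}$ is the free erection of $M_i$, i.e.\ the maximum element of Crapo's lattice of erections of the \emph{fixed} matroid $M_i$; combined with $M_{i+1}\preceq N_{i+1}$ this gives $N_{i+1}=M_{i+1}$, and hence $N=N_k=M$. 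This uses only the defining maximality of the free erection within the erection lattice of a single matroid and never compares free erections of two different matroids. With that substitution, the rest of your argument (the rank reduction producing a forbidden non-trivial erection of $M$ when $\rank N>\rank M$, and the identification $T^k(N)=M_0$) closes the proof.
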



The same proof technique can be used to deduce the following version of this lemma for partial elevations.

\begin{lemma}\label{lem:unique_partial}
Let $M_0$ be a matroid on $E$ with $\rank M_0\leq k$
and ${\cal C}_0$ be the family of all non-spanning circuits in $M_0$.
Then the free elevation of $M_0$ to rank $k$ is a  maximal 
${\cal C}_0$-matroid on $E$
of rank at most $k$.
\end{lemma}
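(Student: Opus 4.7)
The plan is to prove the lemma by induction on $k - \rank M_0$, adapting the proof of Lemma~\ref{lem:free_elevation} by using matroid truncation to step down one rank at a time. Let $M^*$ denote the free elevation of $M_0$ to rank $k$. First I handle the edge case: if the iterative free-erection construction of $M^*$ terminated before reaching rank $k$ (because some intermediate matroid had no non-trivial free erection), then $M^*$ coincides with the unrestricted free elevation of $M_0$, and Lemma~\ref{lem:free_elevation} gives the maximality directly. Hence we may henceforth assume $\rank M^* = k$.

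For the base case $k = \rank M_0$, we have $M^* = M_0$. Given any $\MC_0$-matroid $M$ of rank at most $k$ with $M_0 \preceq M$, domination forces $\rank M = \rank M_0$. I would then argue that every circuit $C$ of $M_0$ is dependent in $M$: if $C \in \MC_0$ this holds by hypothesis, and if $C$ is a spanning circuit of $M_0$ then $|C| = \rank M_0 + 1 > \rank M$ forces dependence on size grounds alone. Combined with the containment of independent sets inherent in $M_0 \preceq M$, this yields $M = M_0$.

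For the inductive step, let $N$ be the free elevation of $M_0$ to rank $k-1$, so that $M^*$ is the free erection of $N$ and $N = T(M^*)$, where $T$ denotes truncation. Given a $\MC_0$-matroid $M$ of rank at most $k$ with $M^* \preceq M$, domination forces $\rank M = k$. The key observation is that each $X \in \MC_0$ satisfies $|X| \le \rank M_0 \le k - 1$, so $X$ is a circuit in $M$ that survives as a circuit in $T(M)$ (its proper subsets have size at most $k-2$ and remain independent after truncation). Thus $T(M)$ is a $\MC_0$-matroid of rank $k-1$, and since truncation preserves weak-order domination, we have $N = T(M^*) \preceq T(M)$. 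The inductive hypothesis then yields $T(M) = N$, so $M$ is an erection of $N$; since $M^*$ is the free erection of $N$, Crapo's theorem identifies $M^*$ as the maximum element in the erection lattice of $N$, giving $M \preceq M^*$. Combined with $M^* \preceq M$ we conclude $M = M^*$.

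The main technical point is the preservation of $\MC_0$-circuits under truncation from rank $k$ to rank $k-1$, which reduces to the size bound $|X| \le \rank M_0 \le k-1$ for $X \in \MC_0$; no appeal to Lemma~\ref{lem:freerank} or to the modular-closure algorithm is needed. The only genuine organizational subtlety is the separation of the early-termination case from the main induction; once that is handled, the argument is a clean combination of induction, the truncation/erection duality, and Crapo's maximality of the free erection within the erection lattice.
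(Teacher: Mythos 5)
Your argument is correct, and it is essentially self-contained, which is more than the paper offers: the paper proves nothing here, merely remarking that ``the same proof technique'' as in \cite[Lemma 3.1]{CJT2} (our Lemma \ref{lem:free_elevation}) applies. Your induction on $k-\rank M_0$ is a clean way to make that remark precise. The two pillars both check out: (i) every $X\in\MC_0$ has $r_{M_0}(X)=|X|-1<\rank M_0$, so $|X|\leq\rank M_0\leq k-1$, whence $X$ and all its proper subsets lie strictly below the truncation threshold and $X$ survives as a circuit of $T(M)$; and (ii) since $\rank M^*=\rank M=k$, truncating both sides preserves the weak order, so $N=T(M^*)\preceq T(M)$, the inductive hypothesis forces $T(M)=N$, and Crapo's identification of the free erection as the top of the (finite) erection lattice of $N$ gives $M\preceq M^*$. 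Your handling of the degenerate cases is also right: when the erection process halts below rank $k$ the partial free elevation coincides with the full free elevation and Lemma \ref{lem:free_elevation} applies directly, and in the base case the observation that a spanning circuit of $M_0$ has exactly $\rank M_0+1$ elements (hence is automatically dependent in any $M$ of rank at most $\rank M_0$) closes the argument. The only presentational caveat is that you should state explicitly which definition of ``free elevation to rank $k$'' you are using (iterate free erections until rank $k$ is reached or no non-trivial erection exists), since the paper leaves this implicit; your proof depends on $M^*$ being obtained from $N$ by a single free erection, which is exactly that definition.
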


Our next lemma is a slight strengthening of \cite[Lemma 2.1]{JT}.

\begin{lemma}\label{lem:free_elevation1}
Let $M_0$ be a matroid with groundset $E$, $\MC_0$ be the set of non-spanning circuits of $M_0$ and $M=(E,r)$ be a partial elevation of $M_0$ to rank $k$.
Suppose that,
for every non-spanning connected flat $F$ of $M$, there is a proper $\MC_0$-sequence  $\MS$
with  $r(F)={\rm val}(F,\MS)$.
Then $r(Y)={\rm val}_{\MC_0}(Y)$ for all non-spanning $Y\subset E$, 
$M$ is the unique maximal $\MC_0$-matroid on $E$ of rank at most $k$, and $M$ is equal to the partial free elevation of $M$ to rank $k$. Furthermore, 
if ${\rm val}(E,\MS)=k$ for some proper $\MC_0$-sequence  $\MS$, then 
$r=\val_{\MC_0}$, $M$ is the unique maximal $\MC_0$-matroid on $E$  and $M$ is equal to the free elevation of $M$.
\end{lemma}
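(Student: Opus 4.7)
The plan is to extend the hypothesis from connected non-spanning flats to all non-spanning subsets of $E$, and then read off the maximality statements from Lemmas~\ref{lem:upper1}, \ref{lem:unique_partial} and \ref{lem:free_elevation}. Lemma~\ref{lem:upper1} applied to $M$ (which is a $\MC_0$-matroid, since every non-spanning circuit of $M_0$ survives as a circuit under erection) already gives $r(Y)\le\val_{\MC_0}(Y)$ for every non-spanning $Y\subsetneq E$. For the reverse inequality, set $F=\cl_M(Y)$; then $F$ is a non-spanning flat, $r(Y)=r(F)$, and $\val(Y,\MS)\le\val(F,\MS)$ for every proper $\MC_0$-sequence $\MS$, so it suffices to show $\val_{\MC_0}(F)\le r(F)$ for every non-spanning flat $F$ of $M$.

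When $F$ is connected this is exactly the hypothesis. When $F$ is disconnected, decompose $F=F_1\dot\cup\cdots\dot\cup F_m$ into the connected components of $M|F$. Each $F_i$ is a separator of $M|F$ contained in the flat $F$ of $M$, hence a connected non-spanning flat of $M$ in its own right. The hypothesis applied to $F_i$ yields a proper $\MC_0$-sequence $\MS_i=(X_1^{(i)},\ldots,X_{p_i}^{(i)})$ with $\val(F_i,\MS_i)=r(F_i)$, and the ``furthermore'' clause of Lemma~\ref{lem:upper1} then forces $A_i:=\bigcup_j X_j^{(i)}\subseteq\cl_M(F_i)=F_i$. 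Because the $F_i$ (and so the $A_i$) are pairwise disjoint, the concatenation $\MS=\MS_1\cdot\MS_2\cdots\MS_m$ is a proper $\MC_0$-sequence, and using $A_i\subseteq F_i$ together with the additivity of $r$ on components of $M|F$ we obtain
\[
\val(F,\MS)\;=\;\Bigl|F\cup\bigcup_{i=1}^m A_i\Bigr|-\sum_{i=1}^m p_i\;=\;|F|-\sum_{i=1}^m p_i\;=\;\sum_{i=1}^m(|F_i|-p_i)\;=\;\sum_{i=1}^m r(F_i)\;=\;r(F),
\]
so $\val_{\MC_0}(F)\le r(F)$, which proves $r(Y)=\val_{\MC_0}(Y)$ for every non-spanning $Y\subsetneq E$.

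The maximality statements now follow routinely. For any $\MC_0$-matroid $N$ on $E$ with $\rank N\le k$, Lemma~\ref{lem:upper1} gives $r_N(Y)\le\val_{\MC_0}(Y)=r(Y)$ on non-spanning $Y$ and $r_N(Y)\le\rank N\le k=r(Y)$ on spanning $Y$, so $N\preceq M$ in the weak order. Hence $M$ is the unique maximum $\MC_0$-matroid of rank at most $k$, and combining with Lemma~\ref{lem:unique_partial} identifies $M$ with the partial free elevation of $M_0$ to rank $k$. For the ``furthermore,'' if $\val(E,\MS)=k$ for some proper $\MC_0$-sequence $\MS$ then $\val_{\MC_0}(E)\le k=r(E)\le\val_{\MC_0}(E)$ by Lemma~\ref{lem:upper1}, forcing $\val_{\MC_0}(E)=k$; a short sandwich $r(Y)=k\ge\val_{\MC_0}(Y)\ge r(Y)$ for spanning $Y$ extends the identity $r=\val_{\MC_0}$ to all of $2^E$, and repeating the weak-order comparison without the rank cap, together with Lemma~\ref{lem:free_elevation}, identifies $M$ with the free elevation of $M_0$.

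The main obstacle is the disconnected case of the second paragraph. The decisive observation is that the ``furthermore'' clause of Lemma~\ref{lem:upper1} pins the circuits used for each $F_i$ inside $F_i$; this simultaneously guarantees that the concatenation $\MS$ is still proper and makes the computation of $\val(F,\MS)$ telescope down to $r(F)$ via the additivity of $r$ on matroid components. Everything else is a packaging of standard weak-order arguments.
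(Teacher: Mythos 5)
Your proof is correct and follows essentially the same route as the paper's: both reduce the identity $r=\val_{\MC_0}$ on non-spanning sets to the connected-flat hypothesis, using the ``furthermore'' clause of Lemma~\ref{lem:upper1} to confine the circuits of each sequence inside the relevant flat so that concatenation stays proper and $\val$ is additive, and both then derive the maximality and free-elevation statements from Lemmas~\ref{lem:upper1}, \ref{lem:unique_partial} and \ref{lem:free_elevation}. The only (cosmetic) difference is organisational: the paper runs a minimal-counterexample induction, adding one element at a time to reach a flat and splitting a disconnected flat into two pieces, whereas you pass directly to $\cl_M(Y)$ via monotonicity of $\val(\cdot,\MS)$ and decompose into all connected components at once; note that, like the paper, your claim that each component $F_i$ is a flat of $M$ implicitly uses that $M$ is loopless.
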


\begin{proof}
We first show that $r(Y)={\rm val}_{\MC_0}(Y)$ for all non-spanning $Y\subset E$.
Since  $r_{M}(Y)\leq \val_{\MC_0}(Y)$  by Lemma~\ref{lem:upper1}, we can verify this
by showing that, for every non-spanning $Y\subseteq E$, 
there is a proper ${\MC_0}$-sequence $\MS$  such that $r_{M}(Y)={\rm val}(Y,\MS)$. 

Suppose, for a contradiction, that this is false for some set $Y$. 
We may assume that $Y$ has been chosen such that $r_M(Y)$ is as small as possible  and, subject to this condition, $|Y|$ is as large as possible.
If $Y$ is not a flat then $r_M(Y+e)=r_M(Y)$ for some $e\in E\sm Y$ and we can now use the maximality of $|Y|$ to deduce that there exists a proper  ${\MC_0}$-sequence $\MS$  such that 
$r_{M}(Y+e)={\rm val}(Y+e,\MS)$.
By Lemma~\ref{lem:upper1} and $r_{M}(Y)=r_{M}(Y+e)$, $e\in \bigcup_{X\in \MS} X$.
Hence,  ${\rm val}(Y+e,\MS)={\rm val}(Y,\MS)=r_{M}(Y+e)=r_{M}(Y)$. This would contradict the choice of $Y$. Hence $Y$ is a flat.

Suppose $Y$ is not connected. Then we have $r_M(Y)=r_M(Y_1)+r_M(Y_2)$ for some partition $\{Y_1,Y_2\}$ of $F$. Since 
$M$ is loopless,
$Y_i$ is a flat of $M$ and  $1\leq r_M(Y_i)<r_M(Y)$ for both $i=1,2$. The choice of $Y$ now implies that there exists 
a proper ${\MC_0}$-sequence $\MS_i$  such that $r_{M}(Y_i)={\rm val}(Y_i,\MS_i)$ for $i=1,2$. Since each $Y_i$ is a flat, we have $X_i\subseteq Y_i$ for all $X_i\in \MS_i$ by  Lemma~\ref{lem:upper1}. This implies that the concatenation  $\MS=(\MS_1,\MS_2)$ is a proper ${\MC_0}$-sequence and satisfies
$${\rm val}(Y,\MS)={\rm val}(Y_1,\MS_1)+{\rm val}(Y_2,\MS_2)=r_{M}(Y_1)+r_{M}(Y_2)=r_{M}(Y).$$ 
This contradicts the choice of $Y$. 

Hence 
$Y$ is a connected flat and we can use  the hypothesis of the lemma to deduce that there is a proper ${\MC_0}$-sequence $\MS$  such that $r_{M}(Y)={\rm val}(Y,\MS)$. This again contradicts our choice of $Y$ and completes the proof  that $r(Y)={\rm val}_{\MC_0}(Y)$ for all non-spanning $Y\subset E$.

We next show that $M$ is the unique maximal $\MC_0$-matroid on $E$ of rank at most $k$.
Let $M'$ be any other $\MC_0$-matroid on $E$ of rank at most $k$. Then $r_{M'}(Y)\leq {\rm val}_{\MC_0}(Y)$ by Lemma \ref{lem:upper1}. This implies that $r_{M'}(Y)\leq r(Y)$ when $Y$  is non-spanning in $M$. The same inequality holds trivially when  $Y$ spans $M$ since $M'$ has rank at most $k$ and $M$ has rank $k$. Hence 
$M$ is the unique maximal $\MC_0$-matroid on $E$ of rank at most $k$. 

The fact that $M$ is the free elevation of $M_0$ to rank $k$ now follows from Lemma \ref{lem:unique_partial}. This completes the proof of the first part of the lemma.

To prove the second part, we suppose that ${\rm val}(E,\MS)=k$ for some  proper $\MC_0$-sequence  $\MS$. Since ${\rm val}(E,\MS)$ is an upper bound on the rank of every $\MC_0$-matroid by Lemma \ref{lem:upper1}, the first part of the lemma
now implies that $M$ is the unique maximal $\MC_0$-matroid on $E$ and $M$ is equal to the free elevation of $M_0$.
\end{proof}

We will also need a result on the family of cyclic flats of a matroid.
Brylawski \cite[Proposition 2.1]{B} showed that a matroid is completely determined by its cyclic flats and their ranks.  The following result of Sims \cite{S}, subsequently rediscovered by Bonin and de Meir \cite[Theorem 3.2]{BM}, characterises when a given family of ranked sets is the family of cyclic flats of a matroid.

\begin{theorem}\label{thm:cyclic_flats}
Given a collection $\MZ$ of subsets of some ground set $E$ and a
function $r$ mapping $\MZ$ to the nonnegative integers, there is a matroid $M$ on $E$ that has $\MZ$ as its lattice of
cyclic flats, and $r$ the ranks of those cyclic flats, if and only if\\
(Z0) $\MZ$ is a lattice under inclusion;\\
(Z1) the minimum element $0_\MZ$ in this lattice has $r (0_\MZ) = 0$;\\
(Z2) $ 0 < r (Y ) - r (X) < |Y \sm X|$ for all $X,Y\in \MZ$ with $X \subsetneq Y$;\\
(Z3) for all $X,Y \in \MZ$,
$r (X) + r (Y ) \geq r (X \wedge Y ) + r (X \vee Y ) + |(X \cap Y ) \sm (X \wedge Y )|.$
\end{theorem}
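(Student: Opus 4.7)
\medskip

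\noindent\emph{Proof plan for Theorem \ref{thm:cyclic_flats}.}

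The plan is to prove both directions separately. Necessity is essentially a matter of unpacking standard properties of cyclic flats, while sufficiency requires building the matroid $M$ explicitly from the data $(\MZ,r)$ and verifying that its cyclic flats recover $\MZ$.

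For the necessity direction, let $M$ be a matroid with lattice of cyclic flats $\MZ$ and rank function $r$. Axiom (Z0) is Brylawski's result that the cyclic flats of any matroid form a lattice with $X\wedge Y=\cyc(X\cap Y)$ and $X\vee Y=\cl(X\cup Y)$ (the closure of a cyclic set is cyclic, and the largest cyclic subset of a flat is itself cyclic). For (Z1), the minimum cyclic flat is the set of loops, which has rank $0$. For (Z2), if $X\subsetneq Y$ are cyclic flats, then $X$ being a flat forces $r(Y)>r(X)$, while $Y$ being cyclic guarantees that some element of $Y\sm X$ lies in a circuit of $Y$, which gives $r(Y)-r(X)<|Y\sm X|$ after iterating. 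For (Z3), I would combine the usual submodular inequality $r(X)+r(Y)\geq r(X\cup Y)+r(X\cap Y)$ with the two identities $r(X\vee Y)=r(X\cup Y)$ (since $X\cup Y$ is cyclic, so $\cl(X\cup Y)=X\vee Y$ has the same rank) and $r(X\cap Y)=r(X\wedge Y)+|(X\cap Y)\sm (X\wedge Y)|$ (the elements of $(X\cap Y)\sm (X\wedge Y)$ are coloops of $M|_{X\cap Y}$, since otherwise they would be in a circuit inside $X\cap Y$ and hence inside $X\wedge Y$).

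For the sufficiency direction, given $(\MZ,r)$ satisfying (Z0)--(Z3), I would define
\[
r_M(A)=\min\{r(Z)+|A\sm Z|:Z\in \MZ\}\quad\text{for all }A\subseteq E,
\]
and verify that $r_M$ is a matroid rank function whose lattice of cyclic flats is $\MZ$ with ranks given by $r$. Non-negativity, the bound $r_M(A)\leq |A|$ (taking $Z=0_{\MZ}$ and using (Z1)), and unit-increase monotonicity all follow directly. The main step is submodularity: choose optimal $X,Y\in \MZ$ for $A,B$ respectively, bound $r_M(A\cup B)\leq r(X\vee Y)+|(A\cup B)\sm (X\vee Y)|$ and $r_M(A\cap B)\leq r(X\wedge Y)+|(A\cap B)\sm (X\wedge Y)|$, and apply (Z3). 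After substitution, the remaining task is the set-theoretic inequality
\[
|A\sm X|+|B\sm Y|+|(X\cap Y)\sm (X\wedge Y)|\geq |(A\cup B)\sm (X\vee Y)|+|(A\cap B)\sm (X\wedge Y)|,
\]
which I would prove by a pointwise case analysis on where each $e\in E$ lies relative to $X,Y,X\wedge Y,X\vee Y,A,B$.

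Once $r_M$ is shown to be a matroid rank function, it remains to verify that the cyclic flats of $M$ are exactly $\MZ$ and that their ranks match $r$. For $Z\in \MZ$, the definition gives $r_M(Z)\leq r(Z)$, and the reverse inequality follows because, if $r_M(Z)=r(Z')+|Z\sm Z'|$ with $Z'\neq Z$, then combining (Z2) and (Z3) applied to the pair $(Z,Z')$ in $\MZ$ produces a contradiction; one then checks directly from the rank formula that every such $Z$ is a cyclic flat of $M$. Conversely, any cyclic flat $F$ of $M$ must equal the minimiser $Z\in \MZ$ in the definition of $r_M(F)$: if $F\neq Z$, then either an element of $F\sm Z$ would be a coloop of $M|_F$ (contradicting that $F$ is cyclic) or an element of $Z\sm F$ would lie in $\cl_M(F)\sm F$ (contradicting that $F$ is a flat). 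I expect the main obstacle to be the pointwise verification of the set-theoretic inequality underlying submodularity, and the careful bookkeeping in identifying the cyclic flats of $M$ with elements of $\MZ$; both are technical but conceptually clean given (Z0)--(Z3).
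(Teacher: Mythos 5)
The paper does not prove this statement: it is quoted as a known result of Sims \cite{S}, rediscovered by Bonin and de Mier \cite{BM}, so there is no in-paper proof to compare against. Your plan is, in substance, the standard published proof (essentially Bonin--de Mier's argument): necessity via the identities $X\wedge Y=\cyc(X\cap Y)$, $X\vee Y=\cl(X\cup Y)$, $r(X\vee Y)=r(X\cup Y)$ and $r(X\cap Y)=r(X\wedge Y)+|(X\cap Y)\sm(X\wedge Y)|$ combined with submodularity; sufficiency via the candidate rank function $r_M(A)=\min\{r(Z)+|A\sm Z|:Z\in\MZ\}$. I checked the two places where such a sketch could hide a gap and both go through. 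The set-theoretic inequality you reduce submodularity to is indeed true pointwise (the only non-obvious case is $e\in(A\cap B)\cap(X\vee Y)$ with $e\notin X\wedge Y$, which splits according to whether $e$ lies in $X\cap Y$, in exactly one of $X,Y$, or in neither, and each subcase contributes $1$ to the left side). For the identification of $\MZ$ with the cyclic flats, the chain $r(Z')+|Z\sm Z'|\geq r(Z\wedge Z')+|Z\sm(Z\wedge Z')|\geq r(Z)$, obtained from (Z3) together with $r(Z\vee Z')\geq r(Z)$ and then (Z2), gives $r_M(Z)=r(Z)$, and the strictness in (Z2) is exactly what forces each $Z\in\MZ$ to be a flat and to have no coloops in $M|_Z$; your converse argument (a minimiser $Z$ for a cyclic flat $F$ must satisfy $F=Z$) is also correct.

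Two small wording issues, neither fatal. First, in necessity for (Z2) the clean argument is not "iterating" but simply that $Y$ cyclic means $e$ is not a coloop of $M|_Y$ for $e\in Y\sm X$, so $r(Y)=r(Y-e)\leq r(X)+|Y\sm X|-1$. Second, in sufficiency the claim that a minimiser $Z'\neq Z$ "produces a contradiction" is too strong: the correct conclusion is only that $r(Z')+|Z\sm Z'|\geq r(Z)$, so the minimum is attained at $Z$ itself (other minimisers may exist); what the strict inequalities in (Z2) actually rule out is a minimiser for $Z+e$ or $Z-e$ that would violate flatness or cyclicity. With those adjustments your plan is a complete and correct route to the theorem.
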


\subsection{\boldmath Maximal $K_{1,t}$-matroids on $K_n$}

We assume throughout this section that $t\geq 3$ and $n\geq t+1$ are fixed integers. We first give an upperbound on the rank of any $K_{1,t}$-matroids on $K_n$ which follows immediately from \cite[Lemma 3.6]{JT}.

\begin{lemma}\label{lem:ub}
The rank of any $K_{1,t}$-matroid on $K_n$  is at most $t\choose2$. 
\end{lemma}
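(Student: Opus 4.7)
The plan is to apply Lemma~\ref{lem:upper1}: it suffices to exhibit a proper $K_{1,t}$-sequence $\MS$ in $E(K_n)$ with $\val(E(K_n),\MS)=\binom{t}{2}$, since then any $K_{1,t}$-matroid $M$ on $K_n$ satisfies $r_M(E(K_n))\leq \val(E(K_n),\MS)=\binom{t}{2}$. I will in fact build a weakly $K_{1,t}$-saturated sequence for $E(K_n)$ from a starting set $F_0$ of size $\binom{t}{2}$, which automatically gives $\val(E(K_n),\MS)=|F_0|$.

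First I would fix a set $V_0=\{v_1,\ldots,v_t\}$ of $t$ vertices of $K_n$ and take $F_0:=E(K(V_0))$, so $|F_0|=\binom{t}{2}$. The sequence is constructed in two stages. Stage~1: for every $u\in V(K_n)\setminus V_0$ and every $i=1,\ldots,t$ in turn, append the $K_{1,t}$ consisting of the star centered at $v_i$ with leaves $\{v_1,\ldots,v_t,u\}\setminus\{v_i\}$. The $t-1$ leaf-edges $v_iv_j$ ($j\neq i$) lie in $F_0$, so this star introduces exactly one new edge, namely $uv_i$. After Stage~1, the current edge set consists of $F_0$ together with every edge incident to $V_0$. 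Stage~2: for each remaining edge $uw$ (with $u,w\notin V_0$), append the star centered at $u$ with leaves $\{v_1,\ldots,v_{t-1},w\}$; the edges $uv_1,\ldots,uv_{t-1}$ were added in Stage~1, so this star introduces only the single new edge $uw$.

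Since every appended $K_{1,t}$ contributes at least one edge not already present, the sequence $\MS$ is proper, and since each contributes exactly one new edge, it is a weakly $K_{1,t}$-saturated sequence for $E(K_n)$ from $F_0$. Hence $\val(E(K_n),\MS)=|F_0|=\binom{t}{2}$, and Lemma~\ref{lem:upper1} yields the claimed rank bound. The argument is essentially mechanical once the correct starting set $F_0$ and ordering of the stars have been identified; the only point that requires a moment's attention is checking that each star in the sequence truly introduces a new edge (so that $\MS$ is proper), and this is immediate from the two-stage construction above.
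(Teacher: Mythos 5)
Your proof is correct. The paper itself gives no argument for this lemma beyond the remark that it ``follows immediately from [JT, Lemma 3.6]'', so your write-up supplies a self-contained proof where the paper defers to an external reference. The route you take --- exhibiting a weakly $K_{1,t}$-saturated sequence for $E(K_n)$ from a fixed copy of $K_t$ and invoking Lemma~\ref{lem:upper1} --- is exactly the mechanism the paper deploys elsewhere in the appendix (e.g.\ in the proof of Theorem~\ref{thm:S4max}, where $K_n$ is built from $K_4$ by a weakly saturated $K_{1,4}$-sequence, and in Lemma~\ref{lem:partial}), and is almost certainly what underlies the cited [JT, Lemma 3.6]. Your two-stage construction is sound: each Stage~1 star centred at $v_i$ with leaf set $(V_0\setminus\{v_i\})\cup\{u\}$ contributes only the new edge $uv_i$, each Stage~2 star centred at $u$ with leaves $v_1,\dots,v_{t-1},w$ contributes only $uw$, properness is immediate since every star contains an edge not in the union of its predecessors, and the identity $\val(E(K_n),\MS)=|F_0|=\binom{t}{2}$ then follows from the definition of a weakly saturated sequence. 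The only caveat worth recording is the standing hypothesis $n\geq t+1$ (needed for any copy of $K_{1,t}$ to exist and for Stage~1 to be nonvacuous), which the paper assumes throughout that section.
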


Following \cite{JT}, we define the {\em uniform $K_{1,t}$-matroid on $K_n$}
to be the rank $t$ matroid on $K_n$ in which every non-spanning circuit is a copy of $K_{1,t}$.

\begin{theorem}\label{thm:S3}\cite{JT}
The uniform $K_{1,3}$-matroid on $K_n$ 
is the  unique maximal $K_{1,3}$-matroid on $K_n$ and its rank function is $\val_{K_{1,3}}$. 
\end{theorem}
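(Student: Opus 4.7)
The plan is to apply the ``furthermore'' clause of Lemma~\ref{lem:free_elevation1} with $M_0 = M$ taken to be the uniform $K_{1,3}$-matroid $U$ on $K_n$ (viewed as the trivial elevation of itself, with $k = \rank U = 3$) and with $\MC_0$ the set of copies of $K_{1,3}$ in $K_n$. By definition of $U$ these $K_{1,3}$'s are precisely its non-spanning circuits, so the notions of $\MC_0$-matroid and $K_{1,3}$-matroid coincide on $K_n$; the lemma will then yield both $r_U = \val_{K_{1,3}}$ and the uniqueness of $U$ as a maximal $K_{1,3}$-matroid. I must verify (i) that every non-spanning connected flat $F$ of $U$ admits a proper $\MC_0$-sequence $\MS$ with $\val(F,\MS) = r_U(F)$, and (ii) that some proper $\MC_0$-sequence $\MS^*$ satisfies $\val(E(K_n), \MS^*) = 3$.

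For (i) I first classify the non-spanning connected flats of $U$. Since $U$ has rank $3$ and no parallel elements, the connected flats of rank at most $1$ are $\emptyset$ and the single edges, both handled by the empty sequence. For rank $2$, the closure of two edges sharing a vertex $v$ is the full star $K_{1,n-1}$ at $v$ (any further edge at $v$ completes a $K_{1,3}$, while any edge disjoint from $v$ together with two edges at $v$ is independent by a degree argument), whereas two vertex-disjoint edges form a disconnected rank-$2$ flat. Hence the non-spanning connected rank-$2$ flats of $U$ are exactly the $n$ stars $K_{1,n-1}$; for such an $F$ at $v$ with neighbours $u_1,\dots,u_{n-1}$, the sequence of $K_{1,3}$'s $X_k = \{vu_1, vu_2, vu_k\}$ for $k = 3,\dots,n-1$ is a weakly $\MC_0$-saturated sequence for $F$ from $\{vu_1, vu_2\}$, giving $\val(F,\MS) = 2 = r_U(F)$.

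For (ii) I would construct by induction on $n$ a weakly $\MC_0$-saturated sequence for $E(K_n)$ from the triangle $F_0 = \{v_1v_2, v_1v_3, v_2v_3\}$. The base case $n = 4$ is immediate. In the inductive step, to extend a sequence on $K_{n-1}$ to $K_n$ I add the edges at a new vertex $v_n$ in the order $v_nv_1, v_nv_2, v_nv_3, \dots, v_nv_{n-1}$, using successively the $K_{1,3}$'s $\{v_1v_2, v_1v_3, v_1v_n\}$, $\{v_2v_1, v_2v_3, v_2v_n\}$, and then $\{v_nv_1, v_nv_2, v_nv_k\}$ for $k = 3, \dots, n-1$; in each case the other two edges of the $K_{1,3}$ are already present. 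The resulting sequence $\MS^*$ has $\val(E(K_n), \MS^*) = |F_0| = 3$, and the furthermore clause of Lemma~\ref{lem:free_elevation1} then completes the proof. I anticipate no significant obstacle; the only subtle point is the verification that the stars $K_{1,n-1}$ exhaust the non-spanning connected rank-$2$ flats of $U$.
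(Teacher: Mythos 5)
Your proof is correct. The paper cites this theorem from \cite{JT} without giving a proof, but your argument --- classifying the non-spanning connected flats of the uniform $K_{1,3}$-matroid (empty set, single edges, and the stars $K_{1,n-1}$), exhibiting weakly saturated $K_{1,3}$-sequences for these and a spanning sequence with $\val(E(K_n),\MS^*)=3$, and then invoking the ``furthermore'' clause of Lemma~\ref{lem:free_elevation1} --- is exactly the strategy the paper itself uses for the analogous Theorem~\ref{thm:S4max}, so this is essentially the intended approach.
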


This result implies, in particular, that the uniform $K_{1,3}$-matroid on $K_n$ has no nontrivial erection. We will determine the free elevation of   the uniform $K_{1,t}$-matroid  on $K_n$ for all $t\geq 3$ when $n$ is sufficiently large. Our characterisation will imply that this free elevation is the unique maximal 
$K_{1,t}$-matroid  on $K_n$ only when $t=3,4$. To do this we first define a particular matroid $M_{t,n}$ on $K_n$. 


Put $f(b)=(t-1)b-{b\choose2}$ for all integers $b\geq 0$.
Let $K_n=(V,E)$ and $\MZ=\{F\subseteq E: K_n[F]\cong K_b+\bar K_{n-b}, 1\leq b\leq \tau-1\}\cup \{\emptyset,E\}$, where $\tau=\lfloor (2t+1)/3\rfloor$. Put $r(F)=f(b)
$ 
whenever $F\in \MZ$ with $K_n[F]\cong K_b+\bar K_{n-b}$, and put $r(\emptyset)=0$ and $r(E)=f(\tau)$.
We can use Theorem \ref{thm:cyclic_flats} to show 

\begin{lemma}\label{lem:matroid}
There exists a matroid $M_{t,n}$ on $K_n$ such that $\MZ$ is the lattice of
cyclic flats in $M_{t,n}$, and $r(F)$ is the rank of $F$ in $M_{t,n}$ for all $F\in \MZ$. 
\end{lemma}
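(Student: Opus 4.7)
The plan is to apply Theorem~\ref{thm:cyclic_flats} to the collection $\MZ$ together with the rank function $r$, which reduces the lemma to verifying conditions (Z0)--(Z3). Throughout, I would identify each nontrivial element $F\in\MZ\setminus\{\emptyset,E\}$ with the unique vertex set $V_F\subseteq V$ of size $b_F$ with $1\le b_F\le \tau-1$ such that $F$ is the set of edges incident to $V_F$, and I would use the standing hypotheses of the section, namely $t\ge 3$ and $n\ge 2\tau$.

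The first task is to establish (Z0), the lattice structure on $\MZ$. For nontrivial $F_1,F_2\in\MZ$ corresponding to $V_1,V_2$, I would show that the meet is the element corresponding to $V_1\cap V_2$ and the join is the element corresponding to $V_1\cup V_2$ when $|V_1\cup V_2|\le\tau-1$, and equals $E$ otherwise. The key point is that any non-top $F\in\MZ$ with $F\subseteq F_i$ must satisfy $V_F\subseteq V_i$: if some $v\in V_F\setminus V_i$ existed, then every edge from $v$ to $V\setminus V_F$ would lie in $F\subseteq F_i$, forcing $V\setminus V_F\subseteq V_i$ and hence $|V_F|\ge n-|V_i|\ge n-\tau+1>\tau-1$, contradicting $n\ge 2\tau$. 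Condition (Z1) is immediate from $r(\emptyset)=0$.

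Conditions (Z2) and (Z3) will then both follow from the single polynomial identity
\[
f(b_X)+f(b_Y)-f(a)-f(b)=(b_X-a)(b_Y-a),
\]
where $a=|V_X\cap V_Y|$ and $b=|V_X\cup V_Y|$; this is a direct consequence of $b_X+b_Y=a+b$ together with $\binom{x}{2}+\binom{y}{2}=\binom{x+y}{2}-xy$. Specialising to the case $X\subseteq Y$ yields the monotone expression $f(b_Y)-f(b_X)=\tfrac{1}{2}(b_Y-b_X)(2t-1-b_X-b_Y)$, from which (Z2) follows using $b_X+b_Y\le 2\tau-2<2t-1$ (equivalent to $\tau<t$, which holds since $\tau\le (2t+1)/3<t$ for $t\ge 2$) and, for the case $Y=E$, a direct comparison of $f(\tau)-f(b_X)$ with $|E\setminus X|=\binom{n-b_X}{2}$ using $n\ge 2\tau$. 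For (Z3), the quantity $|(X\cap Y)\setminus(X\wedge Y)|$ is exactly the number of edges between $V_X\setminus V_Y$ and $V_Y\setminus V_X$, namely $(b_X-a)(b_Y-a)$; combined with the identity, (Z3) becomes an equality when $b\le\tau-1$, and in the remaining case $b\ge\tau$ it reduces to $f(b)\ge f(\tau)$ for $\tau\le b\le 2\tau-2$. A short computation gives $f(\tau)-f(b)=\tfrac{1}{2}(b-\tau)(\tau+b-2t+1)$, so this inequality holds precisely when $\tau+b\le 2t-1$, which is guaranteed by $b\le 2\tau-2$ together with the defining bound $3\tau\le 2t+1$.

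The main obstacle is the lattice verification in (Z0): one must confirm that intersections and unions of the underlying vertex sets really do produce the lattice operations, and this is the only step that genuinely uses the hypothesis $n\ge 2\tau$. Once (Z0) is established, the remaining conditions are driven mechanically by the polynomial identity above, with the strict inequalities in (Z2) reducing to routine arithmetic involving $\tau<t$ and $3\tau\le 2t+1$.
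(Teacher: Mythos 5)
Your proposal follows the same route as the paper's proof: invoke Theorem~\ref{thm:cyclic_flats} and check (Z0)--(Z3), with (Z0) reduced to an order isomorphism between $\MZ$ and a truncated Boolean lattice, and (Z2)--(Z3) reduced to the binomial identity $\binom{b_1+b_2-b_3}{2}=\binom{b_1}{2}+\binom{b_2}{2}-\binom{b_3}{2}+(b_1-b_3)(b_2-b_3)$. Your final reduction of (Z3) to $f(b)\ge f(\tau)$ for $\tau\le b\le 2\tau-2$, justified by $3\tau\le 2t+1$ and the concavity of $f$, is exactly the paper's last step. Your treatment of (Z0) is in fact more careful than the paper's, which merely asserts that the map to vertex sets is an order-preserving bijection.

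The one genuine problem is your standing hypothesis. The section assumes only $t\ge 3$ and $n\ge t+1$, not $n\ge 2\tau$, and the lemma is needed in that generality: Theorem~\ref{thm:S4max} uses $M_{4,5}$, where $n=5<6=2\tau$. The only place your argument actually uses $n\ge 2\tau$ is the containment claim in (Z0), and there you discard information: if $v\in V_F\setminus V_i$, then \emph{every} edge at $v$ lies in $F\subseteq F_i$ (not just the edges from $v$ to $V\setminus V_F$), and since $v\notin V_i$ this forces $V\setminus\{v\}\subseteq V_i$, whence $|V_i|\ge n-1\ge t>\tau-1$, a contradiction under $n\ge t+1$ alone. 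A second, smaller omission: in (Z2) you only discuss the strict upper bound $r(Y)-r(X)<|Y\setminus X|$ when $Y=E$, but it must also be checked for two non-top flats; there it reduces to $2t-1-b_X-b_Y<2n-1-b_X-b_Y$, i.e.\ $t<n$, which again needs only $n\ge t+1$. With these two repairs your proof goes through under the correct hypothesis and coincides with the paper's.
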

\begin{proof}
By Theorem \ref{thm:cyclic_flats}, it will suffice to show that $\MZ$ and $r$ satisfy axioms $(Z_0)-(Z_3)$.

\medskip
\noindent
\textbf{(Z0)}  Consider the poset consisting of the elements of $\MZ$ ordered by inclusion and the lattice $\ML=\{U\subseteq V:1\leq |U|\leq  \tau-1\}\cup \{\emptyset,V\}$ ordered by inclusion.  (Note that $\ML$ is a lattice since it is obtained  by truncating the lattice of all subsets of $V(K_n)$ ordered by inclusion, to height $\tau+1$. 
Define $\alpha:\MZ\to \ML$ as follows. For each $F\in \MZ\sm\{\emptyset,E\}$ with $K_n[F]\cong K_b+\bar K_{n-b}$ for some $1\leq b\leq \tau$ we put $\alpha(F)$ equal to the vertex set of the copy of $K_b$ in $K_n[F]$.  In addition we put $\alpha(\emptyset)=\emptyset$ and $\alpha(E)=V$. Then  $\alpha$ is an order preserving bijection and hence $\MZ\cong\ML$ is a lattice.

\medskip
\noindent
\textbf{(Z1)} We have $0_\MZ=\emptyset$ and $r(\emptyset)=0$.

\medskip
\noindent
\textbf{(Z2)} Choose $X_1,X_2\in \MZ$ with $X_1 \subsetneq X_2$. Let $\alpha(X_i)=U_i$  for $i=1,2$ and put $b_i=|U_i|$ when $X_i\neq E$ and $b_i= \tau$ when $X_i=E$. Then  $U_1 \subsetneq U_2$ and $0\leq b_1<b_2$,
%
%
$r(X_i)=(t-1)b_i-{b_i\choose2}$ for $i=1,2$, $|X_1|=b_1(n-b_1)+{b_1\choose2}$ and $|X_2|\geq 
b_2(n-b_2)+{b_2\choose2}$.
 This gives 
$$\mbox{$r(X_2)-r(X_1)=(t-1)(b_2-b_1)-{b_2\choose2}+{b_1\choose2}=(b_2-b_1)(t-1-\tfrac{b_2+b_1-1}{2})$}$$
 and 
$$\mbox{$
|X_2\sm X_1|\geq b_2(n-b_2)+{b_2\choose2}-b_1(n-b_1)-{b_1\choose2}=(b_2-b_1)(n-\tfrac{b_2+b_1+1}{2})$}.$$
We can now deduce that $0<r(X_2)-r(X_1)< |E\sm X_1|$ since   $b_2>b_1$,   
$b_2+b_1\leq 2\tau-1$ and $n\geq t+1$.

\medskip
\noindent
\textbf{(Z3)} Choose $X_1,X_2\in \MZ$. 

We first consider the case when $X_2=E$. We have $X_1\wedge E=X_1=X_1\cap E$, $X_1\vee E=E$ and  
$$
r (X_1) + r (E) = r (X_1 \wedge E ) + r (X_1 \vee E ) + |(X_1 \cap E ) \sm (X_1 \wedge E )|$$
holds trivially.

Hence we may suppose that $X_1\neq E\neq X_2$.
Let $\alpha(X_i)=U_i$  for $i=1,2$ and put $U_3=U_1\cap U_2$. Let $|U_i|=b_i$  for $i=1,2,3$. Then   $0\leq b_1,b_2,b_3\leq \tau-1$. We have $X_1\wedge X_2=\alpha^{-1}(U_3)$, $X_1\vee X_2=\alpha^{-1}(U_1\cup U_2)$ if $|U_1\cup U_2|\leq \tau-1$ and $X_1\vee X_2=E$ if $|U_1\cup U_2|\geq \tau$. This gives $r(X_1\wedge X_2)=f(b_3)$, $r(X_1\vee X_2)=f(b_1+b_2-b_3)$ if $|U_1\cup U_2|\leq \tau$ and $r(X_1\vee X_2)=f( \tau+1)$ if $|U_1\cup U_2|\geq \tau+1$. In addition, $(X_1 \cap X_2 ) \sm (X_1 \wedge X_2 )$ consists of all the edges in $K_n$ between $U_1\sm U_2$ and $U_2\sm U_1$ so
$|(X_1 \cap X_2 ) \sm (X_1 \wedge X_2 )|=(b_1-b_3)(b_2-b_3)$.
To verify (Z3) we will also use the following identity which can be verified by counting the number of edges in the complete graph on $U_1\cup U_2$.
\begin{equation}\label{eq:id5}
\mbox{${{b_1+b_2-b_3}\choose2}={{b_1}\choose2}+{{b_2}\choose2}-{{b_3}\choose2}+(b_1-b_3)(b_2-b_3)$.}
\end{equation}

If $|U_1\cup U_2|\leq \tau-1$, then (\ref{eq:id5}) gives
\begin{align*}
&\mbox{$r(X_1 \wedge X_2 ) + r (X_1 \vee X_2 ) + |(X_1 \cap X_2 ) \sm (X_1 \wedge X_2)|$}\\
&=(t-1)(b_1+b_2)-
\mbox{${b_3\choose2}-{b_1+b_2-b_3\choose2}$}+(b_1-b_3)(b_2-b_3)\\
&=\mbox{$(t-1)(b_1+b_2)-{b_1\choose2}-{b_2\choose2}$}\\
&=r(X_1)+r(X_2).
\end{align*}

On the other hand, if $|U_1\cup U_2|\geq \tau$, then 
\begin{align*}
&r(X_1 \wedge X_2 ) + r (X_1 \vee X_2 ) + |(X_1 \cap X_2 ) \sm (X_1 \wedge X_2)|\\
&=(t-1)b_3-
\mbox{${b_3\choose2}+(t-1)\tau-{{\tau}\choose2}$}\mbox{$+(b_1-b_3)(b_2-b_3)$.}
\end{align*}
Equation (\ref{eq:id5}) now gives
\begin{align*}
r (X_1) &+ r (X_2) - r (X_1 \wedge X_2 ) - r (X_1 \vee X_2 ) - |(X_1 \cap X_2 ) \sm (X_1 \wedge X_2 )|\\
&=(t-1)(b_1+b_2-b_3)-\mbox{${b_1+b_2-b_3\choose2}-(t-1)\tau+{{\tau}\choose2}$}\\
&=f(b_1+b_2-b_3)-f(\tau)\geq 0
\end{align*}
since $b_1+b_2-b_3\geq \tau$, 
$b_1+b_2-b_3\leq 2\tau-2$
and $f$ is concave.
\end{proof}

We will abuse notation and continue to use $r$ for the rank function of $M_{t,n}$.
Note that $M_{t,n}$ is a 
$K_{1,t}$-matroid on $K_n$ since we have $r(E(K_{1,n}))=t-1$ so, by symmetry,  $r(E(K_{1,t}))=t-1=r(E(K_{1,t-1}))$ for every copy of $K_{1,t}$ in $K_n$.

We can use the characterisation of the cyclic flats in  $M_{4,n}$ to show that it is the unique maximal $K_{1,4}$-matroid on $K_n$.

\begin{theorem}\label{thm:S4max} $M_{4,n}$ is the unique maximal  $K_{1,4}$-matroid on $K_n$ and its rank function is $\val_{K_{1,4}}$. 
\end{theorem}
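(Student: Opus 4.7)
The plan is to apply Lemma~\ref{lem:free_elevation1} with $M_0$ the uniform $K_{1,4}$-matroid on $K_n$ (of rank $4$), $\MC_0$ the family of all copies of $K_{1,4}$ in $K_n$, and $k=6=\binom{4}{2}$; by Lemma~\ref{lem:ub} this is the rank ceiling for every $K_{1,4}$-matroid on $K_n$, so both conclusions of the theorem will follow simultaneously. The key preliminary observation is that, because the cyclic flats of $M_{4,n}$ are precisely $\emptyset,\,K_1+\bar K_{n-1},\,K_2+\bar K_{n-2},\,E$ with ranks $0,3,5,6$ and the closure of any circuit is a cyclic flat, every circuit of $M_{4,n}$ has size $1,4,6,$ or $7$; in particular there is no circuit of size $2,3,$ or $5$.

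I first verify the hypotheses of Lemma~\ref{lem:free_elevation1}. Each copy of $K_{1,4}$ lies in the rank-$3$ flat $K_1+\bar K_{n-1}$ centred at its centre vertex, so it is dependent; removing any edge leaves a $K_{1,3}$, which is independent by the preliminary observation, so every $K_{1,4}$ is a circuit and $M_{4,n}$ is a $K_{1,4}$-matroid. The same observation shows that the truncation of $M_{4,n}$ to rank $4$ agrees with $M_0$, so $M_{4,n}$ is a partial elevation of $M_0$ to rank $6$.

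Next, I enumerate the non-spanning connected flats of $M_{4,n}$. A connected flat of size $\geq 2$ cannot contain a coloop of its restriction (which would yield a separator), so every such flat is cyclic; the candidates are therefore $K_1+\bar K_{n-1}$ and $K_2+\bar K_{n-2}$. Both are indeed connected: any two edges of $K_1+\bar K_{n-1}$ lie in a common $K_{1,4}$-circuit, while any two edges of $K_2+\bar K_{n-2}$ lie in a common $K_{1,4}$- or $K_{2,3}$-circuit (the latter being a size-$6$ circuit of $M_{4,n}$ by a short rank computation inside $K_2+\bar K_{n-2}$). Together with $\emptyset$ and the singletons, this is the complete list of non-spanning connected flats.

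Finally, I produce, for each non-spanning connected flat $F$ and for $E$ itself, a proper $\MC_0$-sequence $\MS$ with $\val(F,\MS)=r_{M_{4,n}}(F)$, in the form of a weakly $K_{1,4}$-saturated sequence from a base of $M_{4,n}|F$ to $F$. The empty sequence handles $\emptyset$ and the singletons. For $K_1+\bar K_{n-1}$ centred at $v$, start from the base $\{vu_1,vu_2,vu_3\}$ and successively append the $K_{1,4}$'s $\{vu_1,vu_2,vu_3,vu_j\}$ for $j=4,\dots,n-1$. For $K_2+\bar K_{n-2}$ on $\{v_1,v_2\}$, start from the size-$5$ base obtained from the $K_4$ on $\{v_1,v_2,u_1,u_2\}$ by deleting the edge $u_1u_2$ (independent because $M_{4,n}$ has no circuit of size at most $5$ contained in a graph on $4$ vertices), append the $K_{1,4}$'s $\{v_1v_2,v_1u_1,v_1u_2,v_1u_j\}$ to sweep the star at $v_1$, and then proceed analogously at $v_2$. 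For $E$, take the size-$6$ base $E(K_4)$ on $\{1,2,3,4\}$ (independent for the same reason); for each vertex $v\geq 5$, add the edges $v1,v2,v3,v4$ using the four $K_{1,4}$'s centred at $1,2,3,4$ with $v$ in the leaves, and finally add each remaining edge $vw$ (with $5\leq v<w$) via the $K_{1,4}$ centred at $v$ with leaves $\{1,2,3,w\}$. At every step the inserted $K_{1,4}$ introduces exactly one new edge, so the sequence is proper and $\val(F,\MS)=|F|-|\MS|=r_{M_{4,n}}(F)$; in particular $\val(E,\MS)=6=k$. Lemma~\ref{lem:free_elevation1} then delivers $r_{M_{4,n}}=\val_{K_{1,4}}$ together with the uniqueness of $M_{4,n}$ as the maximal $K_{1,4}$-matroid on $K_n$. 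The main obstacle is the detailed bookkeeping in these saturated sequences: one must confirm that the chosen bases are independent (via the enumeration of circuit sizes $1,4,6,7$ derived from the cyclic-flat description) and that every inserted $K_{1,4}$ really has three of its four edges already present, so that exactly one new edge is added at each step.
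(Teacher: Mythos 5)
Your proof is correct and follows essentially the same route as the paper's: identify the non-spanning connected flats of $M_{4,n}$ from its lattice of cyclic flats, exhibit weakly saturated $K_{1,4}$-sequences realising their ranks together with one realising $\val(E,\MS)=6$, and invoke Lemma~\ref{lem:free_elevation1}; you simply carry out in detail the verifications the paper labels ``straightforward''. The only tiny imprecisions are that $M_{4,n}$ has no circuits of size $1$ (its unique rank-$0$ cyclic flat is empty), and that the independence of $E(K_4)$ also requires noting that a size-$6$ circuit must be covered by two vertices, which the edge set of $K_4$ is not.
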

\begin{proof}
Let $M_{4,n}=(E,r)$.
We have $\rank M_{4,n}=f(3)=6$ and the non-spanning connected flats of $M_{4,n}$ are the copies of $K_b+\bar K_{n-b}$ in $K_n$ with $1\leq b\leq 2$ by Lemma \ref{lem:matroid}. It is straightforward to check that, for  each non-spanning connected flat $F$ of $M_{4,n}$,  there is a proper $K_{1,4}$-sequence $\MS$ with $\val(F,\MS)=r(F)$. In addition, there exists a proper $K_{1,4}$-sequence $\MS$ with $\val(E,\MS)=r(E)=6$  since $K_n$ can be constructed from $K_4$ by a weakly saturated $K_{1,4}$-sequence and $|E(K_4)|=6$. The theorem now follows from Lemma \ref{lem:free_elevation}. 
\end{proof}

Lemma  \ref{lem:free_elevation} and  Theorem \ref{thm:S4max} imply that $M_{4,n}$ is the free elevation of the uniform $K_{1,4}$-matroid on $K_n$.
We next investigate whether this extends to $K_{1,t}$-matroids when $t\geq 5$.

\begin{lemma}\label{lem:partial}
$M_{t,n}$ is the
partial free elevation of the uniform  
$K_{1,t}$-matroid on $K_n$ to rank  $f(\tau)$ and $r(Y)=\val_{K_{1,t}}(Y)$ for all non-spanning $Y\subset E$.   
\end{lemma}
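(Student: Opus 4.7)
The plan is to apply Lemma~\ref{lem:free_elevation1} to $M=M_{t,n}$ with $M_0$ the uniform $K_{1,t}$-matroid on $K_n$ (so $\MC_0$ is the family of copies of $K_{1,t}$ in $K_n$) and $k=f(\tau)$. Its conclusion gives both halves of Lemma~\ref{lem:partial} at once, so the task reduces to verifying (a) that $M_{t,n}$ is a partial elevation of $M_0$ to rank $f(\tau)$, and (b) that for every non-spanning connected flat $F$ of $M_{t,n}$ there is a proper $K_{1,t}$-sequence $\MS$ with $\val(F,\MS)=r(F)$.

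For (b), Lemma~\ref{lem:matroid} identifies the non-spanning cyclic flats of $M_{t,n}$ as the copies of $K_b+\bar K_{n-b}$ with $1\leq b\leq \tau-1$, each of rank $f(b)$; as in the proof of Theorem~\ref{thm:S4max}, these exhaust the non-spanning connected flats. Fix such a flat $F$, let $\{v_1,\dots,v_b\}$ be the vertex set of its $K_b$-part, choose $t-b$ arbitrary vertices $u_1,\dots,u_{t-b}\in V(K_n)\setminus\{v_1,\dots,v_b\}$, and set
\[
F_0:=\{v_iv_{i'}:1\leq i<i'\leq b\}\cup\{v_iu_j:1\leq i\leq b,\ 1\leq j\leq t-b\}.
\]
A direct calculation yields $|F_0|=\binom{b}{2}+b(t-b)=f(b)$, and each $v_i$ has degree exactly $t-1$ in $F_0$. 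The $b(n-t)$ remaining edges of $F\setminus F_0$ all have the form $v_iu_j$ with $j>t-b$. I would add them one at a time, taking the next term of the sequence to be the copy of $K_{1,t}$ centred at $v_i$ consisting of the new edge together with any $t-1$ edges at $v_i$ already present; the degree of $v_i$ never drops below $t-1$, so such a choice is always available and the resulting sequence is proper. This produces a weakly $K_{1,t}$-saturated sequence for $F$ from $F_0$, hence $\val(F,\MS)=|F_0|=f(b)=r(F)$ as required.

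For (a), observe first that $M_{t,n}$ is itself a $K_{1,t}$-matroid: since $K_1+\bar K_{n-1}$ is a cyclic flat of rank $t-1$ and Lemma~\ref{lem:matroid} implies that no other cyclic flat of $M_{t,n}$ is contained in it, the restriction of $M_{t,n}$ to any star is the uniform matroid $U_{t-1,n-1}$, making every copy of $K_{1,t}$ a circuit. What remains is to show that the truncation of $M_{t,n}$ to rank $t$ coincides with $M_0$, equivalently that every $t$-element subset $Y\subseteq E(K_n)$ with $Y\not\cong K_{1,t}$ is independent in $M_{t,n}$. This is the main obstacle. I would argue it by first establishing that independence in $M_{t,n}$ is characterised by the sparsity conditions $|I\cap F'|\leq r(F')$ across all cyclic flats $F'$ (requiring an exchange argument tailored to the explicit cyclic-flat lattice of $M_{t,n}$), and then observing that any such $Y$ satisfies these: $Y\not\cong K_{1,t}$ gives $d_Y(v)\leq t-1=f(1)$ for every vertex $v$, and for $2\leq b\leq \tau-1$ one verifies directly that $f(b)\geq t\geq |Y\cap(K_b+\bar K_{n-b})|$. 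With (a) and (b) verified, Lemma~\ref{lem:free_elevation1} then yields $r(Y)=\val_{K_{1,t}}(Y)$ on every non-spanning $Y\subset E$ and identifies $M_{t,n}$ with the partial free elevation of $M_0$ to rank $f(\tau)$.
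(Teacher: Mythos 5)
Your proposal is correct and follows essentially the same route as the paper: apply Lemma~\ref{lem:free_elevation1} with $M_0$ the uniform $K_{1,t}$-matroid, and verify the connected-flat hypothesis by noting that the non-spanning connected flats are the copies of $K_b+\bar K_{n-b}$, $1\leq b\leq \tau-1$, and exhibiting for each a weakly $K_{1,t}$-saturated sequence of value $f(b)$ (the paper starts from $F_0\cong K_b+\bar K_{t-b+1}$ and prepends $b$ star-circuits, whereas you start from $F_0\cong K_b+\bar K_{t-b}$ of size exactly $f(b)$; both give $\val(F,\MS)=f(b)=r(F)$). The only substantive difference is that you also verify the ``partial elevation'' hypothesis of Lemma~\ref{lem:free_elevation1} explicitly; the paper skips this, relying only on the earlier observation that $M_{t,n}$ is a $K_{1,t}$-matroid of rank $f(\tau)$, and your sparsity check via cyclic flats (which is a standard consequence of the fact that the closure of any circuit is a cyclic flat, so no bespoke exchange argument is needed) settles it cleanly.
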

\begin{proof}
By Lemma \ref{lem:free_elevation1},  it will suffice to show that,   
for every non-spanning connected flat $F$ of $M$, there is a proper $K_{1,t}$-sequence  $\MS$ in $K_n$
with  $r(F)={\rm val}(F,\MS)$. 

Let $F$ be a  non-spanning connected flat in $M$. Then $K_n[F]\cong K_b+\bar K_{n-b}$  for some  $1\leq b\leq \tau$ and $r(F)=b(t-1)-{b\choose2}$. Choose $F_0\subseteq F$ with $K_n[F_0]\cong K_b+\bar K_{t-b+1}$. Then each vertex in the $b$-set of $K_n[F_0]$ is the central vertex of a copy of $K_{1,t}$ and these $b$ copies of $K_{1,t}$ give us a proper $K_{1,t}$-sequence $\MS_0$ with $\val(F_0,\MS_0)=b(t-1)-{b\choose2}$.
In addition,  $F$ can be constructed  from $F_0$ by a weakly saturated $K_{1,t}$-sequence $\MS_1$, and the concatenation of these two sequences $\MS=(\MS_0,\MS_1)$  satisfies $r(F)= b(t-1)-{b\choose2}=\val(F,\MS)$.
\end{proof}

Our next result shows that  $M_{t,n}$ is the free elevation of the uniform $K_{1,t}$-matroid on $K_n$ when $n$ is sufficiently large.

\begin{theorem}\label{thm:St} $M_{t,n}$  is the free elevation of the uniform $K_{1,t}$ matroid on $K_n$ whenever $n\geq 2\lfloor (2t+1)/3\rfloor$. 
\end{theorem}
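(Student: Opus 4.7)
The plan is to use Lemma~\ref{lem:partial} together with Lemma~\ref{lem:freerank}.  Lemma~\ref{lem:partial} already establishes that $M_{t,n}$ is the partial free elevation of the uniform $K_{1,t}$-matroid to rank $f(\tau)$, so promoting it to the full free elevation reduces to showing that the free erection of $M_{t,n}$ is trivial, and by Lemma~\ref{lem:freerank} this is equivalent to the statement $E\in\bar{\MX}_0$, where $\MX_0$ is the family of non-spanning cyclic sets of $M_{t,n}$.  I will build $E$ inside $\bar{\MX}_0$ via a short iterative modular-closure construction driven by the cyclic-flat lattice of $M_{t,n}$ from Lemma~\ref{lem:matroid} and by the hypothesis $n\geq 2\tau$.

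First I would check that for any two $(\tau-1)$-subsets $U_1,U_2\subseteq V$ with $|U_1\cap U_2|=\tau-2$, the cyclic flats $F_{U_i}=K_{U_i}+\bar K_{V\setminus U_i}$ form a modular pair in $M_{t,n}$.  A direct description of intersections shows that $F_{U_1}\cap F_{U_2}$ is the cyclic flat $F_{U_1\cap U_2}$ (of rank $f(\tau-2)$) together with the single edge joining the singletons $U_1\setminus U_2$ and $U_2\setminus U_1$, so of rank $f(\tau-2)+1$; meanwhile $F_{U_1}\cup F_{U_2}$ is the set $G_T$ of edges incident to the $\tau$-subset $T:=U_1\cup U_2$, which by an explicit base-selection has rank $f(\tau)$.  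The identity $2f(\tau-1)=f(\tau)+f(\tau-2)+1$, immediate from $f(b)=(t-1)b-\binom{b}{2}$, then gives modularity.  Consequently every such $G_T$ lies in $\bar{\MX}_0$.

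Finally I would invoke $n\geq 2\tau$ to pick two disjoint $\tau$-subsets $T_1,T_2\subseteq V$, so that $G_{T_1},G_{T_2}\in\bar{\MX}_0$.  Their intersection is $K(T_1,T_2)\cong K_{\tau,\tau}$, and the main remaining computation is to show $r(K_{\tau,\tau})=f(\tau)$ in $M_{t,n}$, which I would do by exhibiting a base of $M_{t,n}$ inside $K_{\tau,\tau}$ via a $\MC_t$-style independence check in the spirit of Theorem~\ref{thm:characterization}.  Modularity of $(G_{T_1},G_{T_2})$ then follows from $2f(\tau)=f(\tau)+f(\tau)$.  When $n=2\tau$ the union $G_{T_1}\cup G_{T_2}$ equals $E$ and the theorem follows; for $n>2\tau$ I would iterate, sliding $(T_1,T_2)$ across $V$ and accumulating further modular-pair unions until every edge is covered.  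The hard parts will be verifying $r(K_{\tau,\tau})=f(\tau)$ for arbitrary $t$, and — when $n>2\tau$ — organising the iteration so that the cumulative union is precisely $E$ while the modularity identities hold at every intermediate step; the role of the hypothesis $n\geq 2\tau$ is exactly to provide the room in $V$ needed to realise the requisite $\tau$-subsets.
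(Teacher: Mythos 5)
Your architecture is the same as the paper's: reduce via Lemma~\ref{lem:partial} to showing the free erection of $M_{t,n}$ is trivial, seed $\bar\MX_0$ with spanning cyclic sets of the form ``all edges meeting a $\tau$-set'', and then take a modular pair of two such sets whose intersection is a $K_{\tau,\tau}$. Your seeding step is genuinely different and valid: the paper gets its spanning members of $\bar\MX_0$ from Lemma~\ref{lem:freeseq} via an explicit proper sequence of star circuits, whereas you obtain $G_T$ as the union of the modular pair $F_{U_1},F_{U_2}$ of cyclic flats with $|U_1\cap U_2|=\tau-2$, using the identity $2f(\tau-1)=f(\tau)+f(\tau-2)+1$; that computation checks out (though note it already presupposes $r(G_T)=f(\tau)$, which rests on the same unproved rank claim discussed below).

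The genuine gap is the claim $r(K_{\tau,\tau})=f(\tau)$, which you defer as ``the hard part'' and propose to handle ``in the spirit of Theorem~\ref{thm:characterization}''. That theorem is about $\MS_t(K_n)$ for $t\leq 5$ and says nothing about $M_{t,n}$; the relevant argument must instead exploit the cyclic-flat lattice of $M_{t,n}$ from Lemma~\ref{lem:matroid}: any circuit $C$ inside $K_{\tau,\tau}$ has $\cl(C)\cong K_b+\bar K_{n-b}$ with $b\leq\tau-1$, forcing $b$ vertices of $K_{\tau,\tau}$ to cover $f(b)+1$ edges, which a covering count rules out. Moreover, you need to \emph{exhibit an independent set of size $f(\tau)$ inside $K_{\tau,\tau}$}, and $|E(K_{\tau,\tau})|=\tau^2$ equals $f(\tau)$ only when $t\equiv 2\pmod 3$; in the other residue classes $K_{\tau,\tau}$ is necessarily dependent and one must pass to $K_{\tau,\tau}$ minus a suitable (partial or perfect) matching, whose existence depends on the parity of $\tau$. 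This is exactly why the paper's proof splits into three cases, a case analysis your plan does not anticipate. Separately, your endgame for $n>2\tau$ (iterating modular unions until $E\in\bar\MX_0$) is both unorganised and unnecessary: once $G_{T_1}\cup G_{T_2}\in\bar\MX_0$, Lemma~\ref{lem:freerank} gives $r'(G_{T_1}\cup G_{T_2})=f(\tau)$, and since the free erection is a $K_{1,t}$-matroid containing a copy of $K_{2\tau}\supseteq K_{t+1}$ inside this set, its closure there is all of $E$, so $\rank M'=f(\tau)$ and $M'=M$ with no iteration.
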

\begin{proof}
Put $M_{t,n}=M=(E,r)$ and $\tau=\lfloor (2t+1)/3\rfloor$. 
Since $M$ is the partial free elevation of  the uniform $K_{1,t}$ matroid on $K_n$ to rank $f(\tau)$ by Lemma \ref{lem:partial}, it will suffice to show that the free erection $M'=(E,r')$ of $M$ is equal to $M$. 

Let 
$\MX_0$ be the family of nonspanning cyclic sets of  $M$ and $\bar \MX_0$ the modular closure of $\MX_0$ in $M$. 
We first show that
\begin{equation}\label{eq:St}
\mbox{$X\in \bar \MX_0$ whenever $X\subseteq E$ and $K_n[X]\cong K_\tau+\bar K_{t+1-\tau}$}.
\end{equation}
To see this note  that, since $M$ is a $K_{1,t}$-matroid, 
$K_n[\cl(X)]$ contains a copy of $K_{\tau}+\bar K_{n-\tau}$. Lemma \ref{lem:matroid} now implies that $\cl(X)=E$ so  $X$ spans $M$. In addition, since $K_n[X]\cong K_\tau+\bar K_{t+1-\tau}$, we can find a proper sequence $\MS=(X_1,X_2,\ldots,X_\tau)$ in $E$ such that $X_i\subset X$, $K_n[X_i]$ is a copy of $K_{1,t+1}$ centred on a different vertex of the  copy of $K_\tau$ in $K_n[X]$.
Then
$$\val(X,\MS)=|X|-\tau=\tau(t+1-\tau)-\tau=f(\tau)=r(X).$$ 
In addition, since each edge of $X$ is contained in a copy of $K_{1,t}$ in $K_n[X]$, $X$ is a cyclic set in $M$.
We can now apply Lemma \ref{lem:freeseq} to deduce that 
$X\in \bar \MX_0$. Hence (\ref{eq:St}) holds.

%

The proof now breaks into three cases depending on the congruence class of $n$ modulo three.

\paragraph{Case 1: \boldmath $t\equiv 2 \pmod{3}$.} Then $\tau=(2t-1)/3$ and $f(\tau)=\tau^2$.
We first show that
\begin{equation}\label{eq:St-case1}
\mbox{$X$ is independent in $M$ whenever $X\subseteq E$ and $K_n[X]\cong K_{\tau,\tau}$}.
\end{equation}
Suppose for a contradiction that $X$ contains a circuit $C$ of $M$. Then $F=\cl(C)$ is a cyclic flat of $M$ with $r(F)=r(C)<|C|\leq \tau^2=f(\tau)$. Lemma \ref{lem:matroid} now implies that $K_n[F]\cong K_b+\bar K_{n-b}$ for some $1\leq b\leq \tau-1$ and $r(F)=f(b)$. Since $r(C)=r(F)$ this gives $|C|=r(C)+1=f(b)+1$.  The fact that $C\subseteq F$ implies that some set of $b$ vertices in $V(C)$ cover all edges in $C$ and hence that some set of $b$ vertices of $K_{\tau,\tau}$ cover at least $f(b)+1$ edges of $K_{\tau,\tau}$. The fact that any set of $b$ vertices of $K_{\tau,\tau}$ covers at most $\tau b$ edges of $K_{\tau,\tau}$ now gives $\tau b\geq f(b)+1$. This contradicts the fact that 
$\tau b-f(b)-1=b(2\tau+b+1-2t)/2-1<0$ since $1\leq b\leq \tau-1$ and $\tau=(2t-1)/3$. Hence (\ref{eq:St-case1}) holds.

Choose $X_1,X_2\subseteq E$ such that $K_n[X_1]\cong K_\tau+\bar K_\tau\cong K_n[X_2]$ and $K_n[X_1\cup X_2]\cong K_{2\tau}$. Then 
$X_1,X_2\in \bar \X_0$ by (\ref{eq:St}). Since $K_n[X_1\cap X_2]\cong K_{\tau,\tau}$ we can use (\ref{eq:St-case1}) to deduce that $r(X_1\cap X_2)=\tau^2=\rank M=r(X_1)=r(X_2)=r(X_1\cup X_2)$. Since $\bar \MX_0$ is a modular cyclic family, this gives $X_1\cup X_2\in \bar \MX_0$. Lemma \ref{lem:freerank}  now tells us that $r'(X_1\cup X_2)= r(X_1\cup X_2)=\tau^2$. 
On the other hand, the facts that $M'$ is a $K_{1,t}$-matroid and $K_n[X_1\cup X_2]\cong K_{2\tau}$ imply that the closure of $X_1\cup X_2$ in $M'$ is $E$.  This gives $\rank M'=\tau^2=\rank M$ and $M'=M$.

\paragraph{Case 2: \boldmath $t\equiv 0 \pmod{3}$.} Then $\tau=2t/3$ and $f(\tau)=\tau^2-\tau/2$. Let $K_{\tau,\tau}^*$ be the graph obtained from $K_{\tau,\tau}$ by deleting $\tau/2$ disjoint edges.
We first show that
\begin{equation}\label{eq:St-case2}
\mbox{$X$ is independent in $M$ whenever $X\subseteq E$ and $K_n[X]\cong K_{\tau,\tau}^*$}.
\end{equation}
Suppose for a contradiction that $X$ contains a circuit $C$ of $M$. Then $F=\cl(C)$ is a cyclic flat of $M$ with $r(F)=r(C)<|C|\leq \tau^2-\tau/2=f(\tau)$. Lemma \ref{lem:matroid} now implies that $K_n[F]\cong K_b+\bar K_{n-b}$ for some $1\leq b\leq \tau-1$ and $r(F)=f(b)$. Since $r(C)=r(F)$ this gives $|C|=r(C)+1=f(b)+1$.  
Since $C\subseteq F$, some set of $b$ vertices in $V(C)$ cover all edges in $C$ and hence some set of $b$ vertices of $K_{\tau,\tau}^*$ cover at least $f(b)+1$ edges of $K_{\tau,\tau}^*$. 
The fact that any set of $b$ vertices of $K_{\tau,\tau}^*$ covers at most $\tau b$ edges of $K_{\tau,\tau}$ now gives $\tau b\geq f(b)+1$. This contradicts the fact that $\tau b-f(b)-1=b(2\tau+b+1-2t)/2-1<0$ since $1\leq b\leq \tau-1$ and $\tau=2t/3$. Hence (\ref{eq:St-case2}) holds.

Choose $X_1,X_2\subseteq E$ such that $K_n[X_1]\cong K_\tau+\bar K_\tau\cong K_n[X_2]$ and $K_n[X_1\cup X_2]\cong K_{2\tau}$. Then 
$X_1,X_2\in \bar \X_0$ by (\ref{eq:St}). Since $K_n[X_1\cap X_2]\cong K_{\tau,\tau}$ we can use (\ref{eq:St-case2}) to deduce that $r(X_1\cap X_2)=\tau^2-\tau/2=\rank M=r(X_1)=r(X_2)=r(X_1\cup X_2)$. Since $\bar \MX_0$ is a modular cyclic family, this gives $X_1\cup X_2\in \bar \MX_0$. Lemma \ref{lem:freerank}  now tells us that $r'(X_1\cup X_2)= r(X_1\cup X_2)=\tau^2-\tau$. 
On the other hand, the facts that $M'$ is a $K_{1,t}$-matroid and $K_n[X_1\cup X_2]\cong K_{2\tau}$ imply that the closure of $X_1\cup X_2$ in $M'$ is $E$.  This gives $\rank M'=\tau^2-\tau/2=\rank M$ and $M'=M$.

\paragraph{Case 3: \boldmath $t\equiv 1 \pmod{3}$.} Then $\tau=(2t+1)/3$ and $f(\tau)=\tau^2-\tau$. Let $K_{\tau,\tau}^*$ be the graph obtained from $K_{\tau,\tau}$ by deleting a perfect matching from $K_{\tau,\tau}$.  

We first show that
\begin{equation}\label{eq:St-case3}
\mbox{$X$ is independent in  $M$ whenever $X\subseteq E$ and $K_n[X]\cong K_{\tau,\tau}^*$}.
\end{equation}
Suppose for a contradiction that $X$ contains a circuit $C$ of $M$. Then $F=\cl(C)$ is a cyclic flat of $M$ with $r(F)=r(C)<|C|\leq \tau^2-\tau=f(\tau)$. Lemma \ref{lem:matroid} now implies that $K_n[F]\cong K_b+\bar K_{n-b}$ for some $1\leq b\leq \tau-1$ and $r(F)=f(b)$. Since $r(C)=r(F)$ this gives $|C|=r(C)+1=f(b)+1$.  The fact that $C\subseteq F$ implies that some set of $b$ vertices in $V(C)$ cover all edges in $C$ and hence that some set of $b$ vertices of $K_{\tau,\tau}^*$ cover at least $f(b)+1$ edges of $K_{\tau,\tau}^*$. The fact that any set of $b$ vertices of $K_{\tau,\tau}^*$ covers at most $(\tau-1) b$ edges of $K_{\tau,\tau}^*$ now implies that $(\tau-1) b\geq f(b)+1$. This contradicts the fact that
$(\tau-1) b-f(b)-1=b(2\tau+b-1-2t)/2-1<0$ since $1\leq b\leq \tau-1$ and $\tau=(2t+1)/3$. Hence (\ref{eq:St-case3}) holds.

Choose $X_1,X_2\subseteq E$ such that $K_n[X_1]\cong K_\tau+\bar K_\tau\cong K_n[X_2]$ and $K_n[X_1\cup X_2]\cong K_{2\tau}$. Then 
$X_1,X_2\in \bar \X_0$ by (\ref{eq:St}). Since $K_n[X_1\cap X_2]\cong K_{\tau,\tau}$ we can use (\ref{eq:St-case3}) to deduce that $r(X_1\cap X_2)=\tau^2-\tau=\rank M=r(X_1)=r(X_2)=r(X_1\cup X_2)$. Since $\bar \MX_0$ is a modular cyclic family, this gives $X_1\cup X_2\in \bar \MX_0$. Lemma \ref{lem:freerank}  now tells us that $r'(X_1\cup X_2)= r(X_1\cup X_2)=\tau^2-\tau$. 
On the other hand, the facts that $M'$ is a $K_{1,t}$-matroid and $K_n[X_1\cup X_2]\cong K_{2\tau}$ imply that the closure of $X_1\cup X_2$ in $M'$ is $E$.  This gives $\rank M'=\tau^2-\tau=\rank M$ and $M'=M$.
\end{proof}

\medskip

\noindent
{\bf Remark} We can  use the matroid $M_{5,n}$ to show that the converse of Lemma \ref{lem:freeseq} is false. Let 
$\MC_0$ and $\MX_0$ be the families of  non-spanning circuits  and non-spanning cyclic flats of $M_{5,n}$, respectively. Every circuit in $\MC_0$ has rank 4 or 7. Since $M(5,n)$ has only the trivial erection,  $E\in \bar\MX_0$.  We will show that $\val_{\MC_0}(E)>9=r_M(E)$. We can use the proof technique of Lemma \ref{lem:matroid} to show that  there exists a matroid $N$  
on $E(K_n)$ whose cyclic flats are the copies of $K_{1,n-1}$, $K_{2,n-2}$, $K_{3,3}$, $K_{3,n-3}$ and $E$ with ranks  4, 7, 8, 9 and 10, respectively.
Then $N$ and $M_{5,n}$ have the same rank 8 truncation and hence each element of $\MC_0$ is a circuit of $N$. The fact that $\rank N=10$ now implies that $\val_{\MC_0}(E)\geq 10$.

\begin{thebibliography}{99}

\bibitem{BM} {J. E. Bonin  and A. de Mier}, The lattice of cyclic flats of a matroid,
Annals of Combinatorics 12, 2 (July 2008), 155–170.

\bibitem{brakensiek}
Joshua Brakensiek, Manik Dhar, Jiyang Gao, Sivakanth Gopi, Matt Larson,
Rigidity matroids and linear algebraic matroids with applications to matrix completion and tensor codes, arXiv:2405.00778



\bibitem{B} {T.H. Brylawski}, 
 An affine representation for transversal geometries, Studies in Appl. Math.
54 (1975) 143-160.

\bibitem{B86}
T.~Brylawski, Constructions, in Theory of Matroids, N.L.~White, ed., Cambridge Univ.~Press, Cambridge (1986), 127--223.




\bibitem{CJT} K. Clinch, B. Jackson and S. Tanigawa,
Abstract 3-Rigidity and bivariate $C_2^1$-splines I: Whiteley’s Maximality Conjecture, Discrete Analysis 2022:2, 50 pp.

\bibitem{CJT2}
K.~Clinch, B.~Jackson and S.~Tanigawa,
Abstract 3-rigidity and bivariate $C_2^1$-splines II: Combinatorial Characterization, Discrete Analysis,  2022:3, 32 pp.

\bibitem{C} 
H.~Crapo, Erecting geometries, Proc.~2nd Chapel Hill Conf.~on Comb.~Math, 1970, 74--99.



\bibitem{GGJN} G. Grasegger, H. Guler, B. Jackson and  A. Nixon, Flexible circuits, J. Graph Theory, 100 (2022) 315-330.


\bibitem{G91} J. E. Graver, Rigidity matroids, SIAM Journal on Discrete Mathematics, 4, 1991,
355–368.

\bibitem{GSS93} J. E. Graver, B. Servatius, and H. Servatius, Combinatorial rigidity,
Amer. Math. Soc., 1993.

\bibitem{Iz} I. Izmestiev Projective background of the infinitesimal rigidity of frameworks, Geom.
Dedicata, 140 (2009) 183–203.

\bibitem{JT} B. Jackson and S. Tanigawa, Maximal Matroids in Weak Order Posets, J.~Comb.~Theory Ser.~B,  165 (2024), 20-46.

\bibitem{JJT} B. Jackson, T.  Jord{\'a}n, and S. Tanigawa, Combinatorial conditions for the unique completability of low rank matrices, SIAM J. Discrete Math., 28 (2014), 1797-1819.

\bibitem{J} T.~Jord{'a}n. A note on generic rigidity of graphs in higher dimension, Discrete Applied Mathematics, 297 (2021), 97-101.

\bibitem{K} G. Kalai, Hyperconnectivity of graphs. Graphs Combin. 1 (1985) 65–79.

\bibitem{KNN} G. Kalai, E. Nevo, and I. Novik, Bipartite rigidity, Trans. Amer. Math. Soc. 368
(2016) 5515–5545.

 \bibitem{Las} M.~Las Vergnas, On products of matroids, Discrete Math., 36, 49--55, 1981.

\bibitem{L} {L. Lov\'asz}, {Flats in matroids and geometric graphs}, in Cominatorial Surveys (Proc. Sixth
British Combinatorial Conf., Royal Holloway Coll., Egham, ed. P. J. Cameron) Academic Press, 1977, 45-86.

\bibitem{LY}
L.~Lov{\'a}sz and Y.~Yemini , On generic rigidity in the plane, SIAM J.~Algebr.~Discrete Methods, 
3(1), 91--98, 1982.

\bibitem{M} {J. H. Mason}, Glueing matroids together: a stude of Dilworth truncations and matroid analogues of exterior and symmetric powers, in Algebraic Methods in Graph Theory (ed. L. Lovász and V. T. Sós), Coll. Math. Soc. J. Bolyai 25, North-Holland, 1981, 519–561.





\bibitem{N10}
V. H. Nguyen, On abstract rigidity matroids, SIAM Journal on Discrete Mathematics,
24, 2010, 363–369.

\bibitem{O} J.G. Oxley, Matroid theory, 2nd edition, Oxford University Press, 2011.

\bibitem{Servatius}
B.~Servatius, On the two-dimensional generic rigidity matroid and its dual, J.~Comb.~Theory Ser.~B, 53(1), 106--113, 1991.

\bibitem{S} {J. A. Sims}, Some Problems in Matroid Theory. PhD thesis, Linacre
College, Oxford University, 1980.

\bibitem{SC} A. Singer and M. Cucuringu, Uniqueness of low-rank matrix completion by rigidity theory,
SIAM J. Matrix Anal. Appl., 31 (2010) pp. 1621–1641.


\bibitem{W}
W. Whiteley. Some matroids from discrete applied geometry. In Matroid theory (J.E. Bonin,
J.G. Oxley and B. Servatius eds., Seattle, WA, 1995), Contemp. Math., 197, Amer. Math. Soc.,
Providence, RI, 1996: 171–311.
\end{thebibliography}
\end{document}